\newcommand{\N}{\mathbb{N}}
\newcommand{\Z}{\mathbb{Z}}
\theoremstyle{plain}
\newtheorem{theorem}{Theorem}[section]
\newtheorem{prop}[theorem]{Proposition}
\newtheorem{lemma}[theorem]{Lemma}
\newtheorem{cor}[theorem]{Corollary}
\newtheorem{remark}[theorem]{Remark}
\theoremstyle{definition}
\newtheorem{mydef}[theorem]{Definition}
\newtheorem{notation}[theorem]{Notation}
\author{}
\DeclareMathOperator{\Star}{St}
\DeclareMathOperator{\Link}{Lk}
\newcommand{\ol}{\overline}
\renewcommand{\setminus}{-}
\title{Subgroups of even Artin groups of FC-type}
\author{Yago Antol\'{i}n, Islam Foniqi}
\date{}
\begin{document}
	
\maketitle
\begin{abstract}
We prove a Tits alternative theorem for subgroups of finitely generated even Artin groups of FC type (EAFC groups), stating that there exists a finite index subgroup such that every subgroup of it is either finitely generated abelian, or maps onto a non-abelian free group. 
Parabolic subgroups play a key role, and we show that parabolic subgroups of EAFC groups are closed under taking roots.
\end{abstract}
\vspace{0.3cm}

\renewcommand{\thefootnote}{\fnsymbol{footnote}} 
\footnotetext{
\noindent\emph{MSC 2020 classification:} 20E06, 20F36, 20F65.

\emph{Key words:} Even Artin Groups, FC type, parabolic subgroup, Tits alternative, coherence}     
\renewcommand{\thefootnote}{\arabic{footnote}}

\section{Introduction}
Artin--Tits groups are a class of groups defined through their presentation. 
The family was introduced by J. Tits in the 1960s as generalizations of the presentation of the braid group found by E. Artin.
We recall now the definition to set our convention.
Let $(V,E)$ be a {\it simplicial graph}. i.e. $V$ is a non-empty set and $E$ is a subset of $\{\{u,v\} \mid u,v\in V, u\neq v\}$.
Let $m\colon E\to \{2,3,\dots\}$ be a function.
The pair  $\Gamma = ((V,E),m)$  is called an {\it Artin--Tits system}, and $m$ is called {\it the labeling of $\Gamma$}.

The Artin--Tits group associated to $\Gamma$, denoted by $G_\Gamma$, is the group  
$$G_\Gamma \coloneqq \langle\, V \mid \mathrm{prod}(u,v,m(\{u,v\}))=\mathrm{prod}(v,u,m(\{u,v\})) \,\, \forall \{u,v\}\in E\,\rangle,$$
where $\mathrm{prod}(u,v,n)$ denotes the prefix of length $n$ of the infinite alternating word $uvuvuv\dots$

An Artin--Tits system $\Gamma = ((V,E),m)$ is called {\it even}\, if $m(E)\subseteq 2\mathbb{N}$; it is called {\it right-angled}\, if~$m(E)\subseteq\{2\}$.
The family of right-angled Artin--Tits groups (RAAGs), is the family associated to right-angled Artin--Tits systems. Similarly, the family of even Artin--Tits groups is the one associated to even Artin--Tits systems.

The family of RAAGs stands out as a subfamily of Artin--Tits groups from the richness of its family of subgroups. 
Indeed, from the work of Agol and Wise we know that fundamental groups of hyperbolic 3-manifolds, limit groups, or torsion one-relator groups are (virtually) subgroups of RAAGs.

Even Artin--Tits FC-type groups (EAFC groups)  constitutes a family of Artin--Tits groups that contains the family of RAAGs.
An {\it EAFC system} $\Gamma$ is an even system with the condition that in any triangle of $\Gamma$ there are always at least two edges labeled by 2.
An EAFC group is a group associated to an EAFC system.
It is clear from the definition that all RAAGs are EAFC groups.

There are many features in common between EAFC groups and RAAGs.
Typically, the Artin--Tits groups defined over complete graphs are difficult to understand.
This is not the case for EAFC groups and RAAGs. For instance, the combinatorics of the definition allow to show that if $(\Gamma,m)$ is an EAFC system (resp. right-angled) with $\Gamma$ complete, then $G_\Gamma$ is isomorphic to a direct product of even Artin groups on 1 or 2 generators (resp. 1 generator). 
See Lemma \ref{lem: EAFC over complete graphs}.

Another feature in common between EAFC groups and RAAGs, is that all these groups are poly-free (\cite{HermillerSunic} for RAAGs and \cite{blasco2018poly} for EAFC groups). 
Therefore EAFC groups are locally indicable, and in particular, torsion-free.

In this paper we study subgroups of finitely generated EAFC groups.
Before stating our main result, we recall some terminology. 
A group is called {\it large} if it has a finite index subgroup that maps onto $\mathbb{F}_2$, the non-abelian free group of rank 2.
We use $K$ to denote the fundamental group of a Klein bottle, i.e  $K\cong \langle x,y \mid x^2=y^2\rangle $, which contains $\Z^2$ as an index 2 subgroup.

Our main theorem is the following Tits alternative.

\begin{theorem}\label{thm: main}
Let $G$ be the group associated to the EAFC system $\Gamma$ with $\Gamma$ finite. 
There exists a finite index normal subgroup $G_0$ of $G$ of index at most $\prod_{e\in E} \frac{m(e)}{2}$, such that for any subgroup $H$ of $G$ one of the following holds
\begin{enumerate}
\item[(a)] $H$ is a subgroup of $\Z^s\times K^t$ with $s+2t\leq |V|$ and $H\cap G_0$ is free abelian, or
\item[(b)] $H\cap G_0$ maps onto a non-abelian free group.
\end{enumerate}
In particular, every subgroup of $G$ is either large or virtually abelian.
\end{theorem}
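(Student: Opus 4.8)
The plan is to produce the subgroup $G_0$ as the kernel of an explicit homomorphism onto a finite abelian group, arranged so that $G_0$ is a right-angled Artin group; then the dichotomy (a)/(b) for $H\cap G_0$ becomes an instance of the known Tits alternative for RAAGs (a subgroup of a RAAG is either free abelian or maps onto $\mathbb{F}_2$). Since each defining relator $\mathrm{prod}(u,v,m(e))=\mathrm{prod}(v,u,m(e))$ preserves the number of occurrences of each generator, any assignment $V\to\mathbb{Z}/\tfrac{m(e)}{2}$ extends to a homomorphism $G\to\mathbb{Z}/\tfrac{m(e)}{2}$; collecting these over all edges gives $\phi\colon G\to\prod_{e\in E}\mathbb{Z}/\tfrac{m(e)}{2}$, and I set $G_0=\ker\phi$, a normal subgroup of index at most $\prod_{e\in E}\tfrac{m(e)}{2}$. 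The assignments must be chosen with care: on a single edge $e=\{u,v\}$ with $m(e)=2k$ the map $u\mapsto 0,\ v\mapsto 1$ has kernel $\mathbb{Z}\times F_k$, which is a RAAG, because modulo its centre $\langle (uv)^k\rangle$ the group is $\mathbb{Z}*\mathbb{Z}/k$, the chosen map sends the torsion generator isomorphically, so the kernel downstairs is free, and a central extension of a free group splits.

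First I would establish the structural statement that, for a suitable choice of edge assignments, $G_0$ is a RAAG. Here I would use Lemma~\ref{lem: EAFC over complete graphs}: over a complete graph $G$ is a direct product of one- and two-generator even Artin groups, and the computation above handles each factor (a one-generator factor contributes a $\mathbb{Z}$, a two-generator factor contributes a $\mathbb{Z}\times F_k$). The general case must be assembled from these local pieces along the FC/parabolic structure of $\Gamma$; verifying that the global kernel is genuinely a RAAG, with a compatible orientation of all edges, is the step I expect to be the main obstacle, and it is where root-closure of parabolic subgroups enters, to keep control of centralisers and of roots of the central elements $(uv)^{k}$.

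Granting that $G_0$ is a RAAG, the rest is short. Given $H\le G$, the subgroup $H\cap G_0$ has finite index in $H$ and lies in a RAAG, so by the Tits alternative for RAAGs it is either free abelian or maps onto $\mathbb{F}_2$. In the second case we are in (b). In the first case $H$ is virtually free abelian, hence contains no nonabelian free subgroup and is not large; I would then show $H$ is contained in a parabolic subgroup $P$ which, being virtually abelian, must be a direct product over a complete subgraph on a vertex set $S\subseteq V$ of one-generator factors ($\cong\mathbb{Z}$) and two-generator factors $D_k$. Within a factor $D_k$ a virtually abelian subgroup is cyclic, free abelian of rank $2$, or a copy of the Klein bottle group $K=\langle x,y\mid x^2=y^2\rangle$, realised inside $D_2$ by $x=uv,\ y=vu$ (so that $(uv)^2=(vu)^2$); here again root-closure of parabolics rules out larger subgroups. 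Distributing the factors of $S$ accordingly embeds $H$ into $\mathbb{Z}^s\times K^t$ with $s+2t\le |S|\le |V|$, which is (a).

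The final ``in particular'' clause is then immediate: in case (b) the finite-index subgroup $H\cap G_0$ of $H$ maps onto $\mathbb{F}_2$, so $H$ is large, while in case (a) $H$ embeds into the virtually abelian group $\mathbb{Z}^s\times K^t$ and is therefore virtually abelian. The only genuinely hard input is the RAAG identification of $G_0$, together with the parabolic control of virtually abelian subgroups; everything else is bookkeeping of indices and ranks.
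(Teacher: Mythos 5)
Your proposal hinges on the claim that a suitable finite-index kernel $G_0$ is itself a RAAG, so that the dichotomy follows from the known Tits alternative for RAAGs. This central step is false in general, and the paper's introduction already contains a counterexample: for $V=\{a,b,c,d\}$, $E=\{\{a,b\},\{a,c\},\{b,d\},\{c,d\}\}$ with all labels $4$, the group $G_\Gamma$ is not virtually co-compactly cubulated by Haettel's theorem, hence not commensurable with any RAAG; in particular no finite-index subgroup of it (and so no choice of $G_0$) can be a RAAG. The authors explicitly remark that it is unknown whether every EAFC group has a finite-index subgroup embedding in a RAAG, and that if it did, the theorem would follow from the RAAG case --- which is exactly the reduction you propose. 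Your local computation is fine (the kernel of $D_{2k}\to C_k$ is $\mathbb{F}_k\times\Z$, matching Lemma \ref{lem: D2n is Fntimes Z by Cn}, and the paper's $G_0$ from Lemma \ref{lem: G_0} is essentially your $\ker\phi$), and the index bound is correct; but the "assembly along the FC/parabolic structure" that you flag as the main obstacle is not merely technical --- it cannot be done. The paper instead proves the dichotomy directly: root-closure of parabolic closures (Theorem \ref{thm: main roots}), a Bass-Serre analysis classifying subgroups with no nonabelian free subgroup (Theorem \ref{thm: v abelian subgroups}), and a largeness criterion for groups acting on trees with free abelian vertex stabilizers whose edge groups are direct factors (Theorem \ref{thm:criterium}), applied to the splitting $G_\Gamma=G_{\Star(v)}*_{G_{\Link(v)}}G_\Delta$.

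A secondary gap: in your case (a) you assert that a virtually abelian $H$ lies in a parabolic subgroup which is itself virtually abelian, hence a product over a complete subgraph. The parabolic closure of a virtually abelian subgroup need not be virtually abelian (already in the free group $\langle a,b\rangle$, viewed as the RAAG on two non-adjacent vertices, the cyclic subgroup $\langle ab\rangle$ has parabolic closure the whole group), so this containment does not come for free. The embedding of $H$ into $\Z^s\times K^t$ with $s+2t\le|V|$ is obtained in the paper by induction on $|V|$, splitting into the cases where $H$ fixes a vertex, stabilizes a line, or fixes an end of the Bass-Serre tree, and using Godelle's description of normalizers of parabolics together with root-closure to promote the extensions that arise to direct products.
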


Note that in the case of RAAGs one has $G_0=G$, as $\prod_{e\in E} \frac{m(e)}{2}=1$.
Hence, we have the following corollary.
\begin{cor}[Strongest Tits alternative]\label{cor:strongest}
Any subgroup of a finitely generated RAAG, is either free abelian, or maps onto a non-abelian free group. 
\end{cor}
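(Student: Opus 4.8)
The plan is to derive Corollary \ref{cor:strongest} directly from Theorem \ref{thm: main} by specializing it to the right-angled case. The key observation, which the authors have already flagged in the remark preceding the corollary, is that for a RAAG every edge is labeled by $2$, so the index bound $\prod_{e \in E} \frac{m(e)}{2} = \prod_{e \in E} 1 = 1$ collapses to $1$. Hence the finite-index normal subgroup $G_0$ supplied by the theorem must be all of $G$.

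The first step is to invoke Theorem \ref{thm: main} for the given RAAG $G = G_\Gamma$, where $\Gamma$ is a finite right-angled system, and to fix the subgroup $G_0$ it produces. Since $[G:G_0] \leq 1$, we conclude $G_0 = G$. Now let $H$ be an arbitrary subgroup of $G$. The theorem gives two alternatives for $H$. In case (b), $H \cap G_0 = H \cap G = H$ maps onto a non-abelian free group, which is exactly the second conclusion of the corollary. In case (a), the theorem tells us that $H \cap G_0 = H$ is free abelian, which is exactly the first conclusion. So I would simply substitute $G_0 = G$ into both alternatives and read off that $H$ itself is either free abelian or maps onto a non-abelian free group.

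I do not anticipate any genuine obstacle here, since the corollary is a formal consequence of the main theorem rather than an independent result. The only point requiring a word of care is confirming that the intersection $H \cap G_0$ in the statement of Theorem \ref{thm: main} genuinely reduces to $H$ itself; this is immediate once $G_0 = G$ is established, because $H \cap G = H$ for any subgroup $H \leq G$. One might also remark that the clause ``$H$ is a subgroup of $\Z^s \times K^t$'' in alternative (a) is subsumed by the stronger conclusion that $H$ is free abelian, so no separate analysis of the Klein-bottle factors is needed in the right-angled case. Thus the proof is a one-line specialization, and the statement follows.
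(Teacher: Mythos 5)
Your argument is exactly the paper's: the authors derive the corollary by observing that for a RAAG every label is $2$, so the index bound $\prod_{e\in E}\frac{m(e)}{2}$ equals $1$, forcing $G_0=G$, and then reading off the two alternatives of Theorem \ref{thm: main} with $H\cap G_0=H$. The proposal is correct and matches the paper's (one-line) deduction.
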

This corollary was already established by the first author and A. Minasyan in \cite{antolin2015tits}, where the term 'Strongest Tits alternative' was introduced. 
Residually free groups satisfy (trivially) the strongest Tits alternative, 
and RAAGs were the first non-trivial example of groups satisfying such dichotomy.
J. O. Button showed that tubular free by cyclic groups that act freely on a CAT(0) cube complex have a finite index subgroup satisfying the strongest Tits alternative \cite{Button}.
As far as we know, our theorem provides the first new example of groups satisfying the strongest Tits alternative since \cite{antolin2015tits, Button}. 
It is worth mentioning that the method on this paper to find maps onto $\mathbb{F}_2$ differs from  \cite{antolin2015tits}; we explain this in Remark \ref{rem: difference with AM}. 

We remark that $G_0$ is not uniquely defined and depends on certain choices.
In fact, any subgroup as in Definition \ref{defn: G_0} will satisfy the strongest Tits alternative.

One can see that the EAFC group $D_4=\langle a,b\mid abab=baba\rangle$ does not map onto $\mathbb{F}_2$  and $\langle ab,ba\rangle$ is isomorphic to $K$. 
Thus EAFC groups do not satisfy the strongest Tits alternative and thus the necessity of pass to finite index subgroup in  Theorem \ref{thm: main}.

Regarding the quantitative claims of the Theorem,  in case that $\Gamma$ is complete then $G_\Gamma$ does contain a subgroup isomorphic to $\Z^s\times K^t$ with $s+2t=|V|$ (see Lemma \ref{lem: EAFC over complete graphs}). 
So the bound of the 'poly-cyclic rank' (i.e. the Hirsch length) of virtually abelian subgroups is optimal.

Our bound for the index of $G_0$ is optimal for RAAGs.
For the general case, we do not know if the bound is optimal. 
The bound of the index is related to the two-generated case.
An Artin--Tits group over a graph with two vertices is called {\it dihedral} (as the associated Coxeter group is a dihedral group).
Non-free, even dihedral Artin--Tits groups have the following presentation 
$$D_{2n}=\langle a,b \mid (ab)^n = (ba)^n\rangle.$$
An important observation is that $D_{2n}$ contains a normal subgroup $H\cong \mathbb{F}_n\times \mathbb{Z}$ (Lemma \ref{lem: D2n is Fntimes Z by Cn}), and $G/H\cong C_n$, a cyclic group of order $n$. 

This  subgroup $H$ is exactly a the subgroup $G_0$ of the theorem.
Thus for $D_{2n}$ any subgroup has an index $n$ subgroup that is a subgroup of $\mathbb{F}_n\times \mathbb{Z}$, and hence this index $n$ subgroup is either a subgroup of $\Z^2$ or maps onto $\mathbb{F}_2$.

Other Tits alternative style theorems are known for Artin--Tits groups. 
Recall that the original theorem of Tits, that gives the name to the Tits Alternative, says that subgroups of linear groups are either virtually solvable or contain $\mathbb{F}_2$.
From that, and linearity of Artin--Tits groups of spherical type \cite{CohenetAl,Krammer}, we see that spherical Artin--Tits group satisfy this alternative.
For Artin--Tits groups of FC--type, this dichotomy (the original for Tits) was proved by A. Martin and P. Przytycki \cite{MartinPrzytycki2020}. 
For 2-dimensional Artin groups, there are alternatives that restrict the classes of virtually solvable groups that can appear as subgroups \cite{OsajdaPrzytycki2022,MartinPrzytycki2022} to only $\Z,\Z^2$ and $K$ in certain cases.
The full Tits alternative for  2-dimensional Artin groups was settled by A. Martin in \cite{Martin2022}.
None of the mentioned results prove that subgroups containing non-abelian free subgroups are large, besides \cite{antolin2015tits}.

Theorem \ref{thm: main} uses the action of $G_\Gamma$ on a tree with parabolic stabilizers.
Let $\Gamma$ be an Artin--Tits System and $G_\Gamma$ the associated Artin--Tits group.
For $S \subseteq V$, we denote by $G_S$ the subgroup of $G_\Gamma$ generated by the vertices of $S$. 
Subgroups of this form are called {\it standard parabolic subgroups}, and a theorem of Van der Lek \cite{van1983homotopy} (for the whole class of Artin groups) shows that $G_S\cong G_\Delta$ where $(\Delta, m|_{\Delta})$ is the Artin--Tits system induced by $S$.
A subgroup $K$ of $G_\Gamma$ is called {\it parabolic} if it is a conjugate of a standard parabolic subgroup.

The main theorem of our previous paper \cite{antolin2022intersection} says that the intersection of parabolic subgroups of EAFC groups is again parabolic. 
Let $G_{\Gamma}$ be a finitely generated EAFC group and $Y\subseteq G_\Gamma$.
We denote by $P(Y)$ the {\it parabolic closure of $Y$ in $G_\Gamma$}, that is the intersection of all parabolic subgroups of $G_\Gamma$ that contain $Y$.
By \cite{antolin2022intersection}, $P(Y)$ is parabolic.

To show Theorem \ref{thm: main}, not only it is important to understand parabolic closures of elements, but moreover, a key property that we need is the  that parabolic closures are closed under taking roots. 
The second main result of this paper is the following.
\begin{theorem}\label{thm: main roots}
Let $G_\Gamma$ be an EAFC group, $g\in G_\Gamma$ and $n\in \Z-\{0\}$. Then $P(g)=P(g^n)$.
\end{theorem}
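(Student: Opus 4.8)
The plan is to reduce Theorem~\ref{thm: main roots} to a statement about normalizers of parabolic subgroups, and then to prove that statement by induction on $|V|$ using the visual splittings of $G_\Gamma$. One inclusion is immediate: since $g^n$ lies in the parabolic subgroup $P(g)$, minimality gives $P(g^n)\subseteq P(g)$. For the reverse inclusion it suffices to prove $g\in P(g^n)$, as $P(g)$ is the smallest parabolic containing $g$. Write $Q=P(g^n)$. The elementary point is that $g$ normalizes $Q$: indeed $g$ commutes with $g^n$, so conjugation by $g$ permutes the parabolic subgroups containing $g^n$ (a conjugate of a parabolic is parabolic, and $gg^ng^{-1}=g^n$), and hence fixes their intersection $Q$ setwise. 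Thus $g\in N_{G_\Gamma}(Q)$ with $g^n\in Q$, so $gQ$ is a torsion element of $N_{G_\Gamma}(Q)/Q$. Consequently the theorem follows from the statement
\[(\star)\qquad\text{for every parabolic subgroup }Q\le G_\Gamma\text{ the quotient }N_{G_\Gamma}(Q)/Q\text{ is torsion-free.}\]
(The naive alternative of using the even retraction $G_\Gamma\to G_S$ fails precisely because EAFC groups do not have unique roots, as the Klein bottle relation $x^2=y^2$ shows; the normalizer route circumvents this.)

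To prove $(\star)$ I would conjugate so that $Q=G_S$ is standard and induct on $|V|$. If $\Gamma$ is complete, Lemma~\ref{lem: EAFC over complete graphs} writes $G_\Gamma$ as a direct product of one- and two-generator even factors; every standard parabolic is the product of the corresponding parabolics of the factors, the normalizer splits accordingly, and $(\star)$ reduces to the factors $\Z$ and $D_{2k}$, where it is checked by hand via the decomposition of Lemma~\ref{lem: D2n is Fntimes Z by Cn}. If $\Gamma$ is not complete, I use a visual amalgam $G_\Gamma=G_{\Gamma_1}*_{G_{\Gamma_0}}G_{\Gamma_2}$ coming from a separating full subgraph, whose factors have fewer vertices and whose vertex and edge stabilizers on the Bass--Serre tree $T$ are parabolic; the precise splitting is chosen adapted to $Q$.

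The argument then splits according to the action on $T$. In the elliptic case one chooses the star splitting of a vertex $w$ outside $S$, so that $Q$ is contained in a vertex stabilizer $W$; then $g^n$ is elliptic, hence so is $g$, and an inner induction on the distance in $T$ between a vertex fixed by $g$ and one fixed by $g^n$---using that consecutive edge groups are parabolic subgroups of the vertex groups and that root-closure already holds in those smaller vertex groups by the induction hypothesis---shows that $g$ and $g^n$ fix a common vertex, so $g\in W$. Now $g^n\in Q\cap W$, which is parabolic by \cite{antolin2022intersection}; minimality $Q=P(g^n)$ forces $Q\subseteq W$, and since $W$ has fewer vertices the induction hypothesis inside $W$ yields $g\in Q$.

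The delicate case, which I expect to be the main obstacle, is when $Q$ is not elliptic for the relevant splitting---equivalently when $g^n$ acts hyperbolically on $T$ (this is forced, e.g., in the ``cone'' configuration where all non-edges of $\Gamma$ lie inside $S$). Here $g$ preserves the minimal invariant subtree $\mathrm{Min}(Q)$, which is the Bass--Serre tree of the induced splitting of $Q$, and since $g^n\in Q$ the element $g$ induces a finite-order automorphism of that splitting. The danger is a ``vertex-swap'' automorphism, which is exactly the Klein-bottle phenomenon $x^2=y^2$: it would push a power of $g$ into a centralizer-type subgroup acting as a translation along the axis. The feature to exploit is that such subgroups are not parabolic---for instance the center $\langle(ab)^2\rangle$ of $D_4$ is not a parabolic subgroup---whereas $Q=P(g^n)$ is parabolic; together with even-ness and the FC condition (which restrict the finite-order automorphisms of the visual splittings) this should rule out the swap and give $g\in Q$. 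Making this incompatibility precise, i.e.\ showing that no nontrivial torsion survives in $N_{G_\Gamma}(Q)/Q$ in the hyperbolic case, is the crux of the proof.
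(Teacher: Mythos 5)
Your reduction of the theorem to the statement $(\star)$ that $N_{G_\Gamma}(Q)/Q$ is torsion-free for every parabolic $Q$ is correct (and $(\star)$ is in fact equivalent to the theorem, since the reverse implication follows from root-closure of parabolics), and your treatment of the complete-graph base case and of the elliptic case is plausible in outline. But the proposal has a genuine gap exactly where you flag it: the case in which $Q=G_S$ is not elliptic for any visual splitting. This case is unavoidable in your induction on $|V|$ --- it occurs precisely when every vertex $w\in V\setminus S$ satisfies $\Star(w)=V$ while $\Gamma$ itself is not complete, so the only available splittings are along non-edges inside $S$. Your proposed way out (``even-ness and the FC condition should rule out the swap'') is not an argument, and an induction on $|V|$ alone has no mechanism to make progress here, because a cone vertex cannot be split off visually and the subgroup generated by $S$ together with a cone vertex need not decompose as a direct product when some edge at that vertex has label $\geq 4$ (the $D_4$ example you cite is exactly this configuration). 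So as written the crux of the theorem is asserted, not proved.

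For comparison, the paper resolves precisely this cone-vertex case by a different device: it inducts on the pair $(|E_{>2}\Gamma|,|V\Gamma|)$ in lexicographic order, reduces to the situation $P(g)=G_\Gamma$, and for a cone vertex $v$ distinguishes whether $\rho_v(g)$ is trivial. If it is non-trivial one gets $v\in S$ for free from the retraction; if it is trivial, both $g$ and $g^n$ lie in $\ker\rho_v$, which by Lemma \ref{lem: description kernel} is isomorphic to an EAFC group $G_\Omega$ with strictly fewer edges of label greater than $2$ (though possibly more vertices --- hence the lexicographic order), and Lemma \ref{lem: parabolic in Omega} transfers the inductive conclusion $P^\Omega(g)=P^\Omega(g^n)$ back to $\Gamma$. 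Some substitute for this kernel argument, or another way to kill the putative torsion in $N_{G_\Gamma}(Q)/Q$ in the hyperbolic case, is needed before your proposal becomes a proof.
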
  

We note that the closure of parabolic subgroups by taking roots is known for other families of groups as spherical type \cite{CumplidoetAl2019}, or large type \cite{CumplidoetAl2023}.

We end the paper giving a characterization of EAFC groups that are coherent. 
This result is not new, as it can be found in the literature. 
But we have not found it with the formulation given here, which serves to support our claim that the family of RAAGs and EAFC groups are intimately related. 
Droms \cite{droms1987graph} showed that RAAGs with defining graph $\Gamma$ are coherent if and only if $\Gamma$ is chordal. 
For EAFC groups we have:
\begin{theorem}\label{thm: coherence in EAFC}
Let $\Gamma$ be an EAFC system. Then $G_\Gamma$ is coherent if and only if $\Gamma$ and $\Gamma^{\leq 2}$ are chordal.
\end{theorem}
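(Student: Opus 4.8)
The plan is to reduce everything to Droms' theorem \cite{droms1987graph} through the clique structure of $\Gamma$. Two elementary facts are used throughout. First, coherence is a commensurability invariant: if $G_0\le G$ has finite index and is coherent, then for finitely generated $H\le G$ the subgroup $H\cap G_0$ has finite index in $H$, hence is finitely generated and (in $G_0$) finitely presented, so $H$ is finitely presented; conversely subgroups of coherent groups are coherent. Second, parabolic subgroups are subgroups, so to prove non-coherence of $G_\Gamma$ it suffices to exhibit a single non-coherent parabolic. By Lemma~\ref{lem: EAFC over complete graphs}, the parabolic on a clique of $\Gamma$ is a direct product of copies of $\mathbb{Z}$ and dihedral groups $D_{2n}$, with the thick edges (those of label $>2$) forming a matching inside the clique. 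The first real step is a structural lemma: two thick edges $\{a_1,b_1\},\{a_2,b_2\}$ lying in a common clique force an induced $4$-cycle in $\Gamma^{\leq 2}$, since on those four vertices the two diagonals removed from the clique are exactly the thick edges, leaving the square $a_1a_2b_1b_2$. Hence if $\Gamma^{\leq 2}$ is chordal every clique of $\Gamma$ carries at most one thick edge, so every clique parabolic is $\mathbb{Z}^k$ or $D_{2n}\times\mathbb{Z}^{k-2}$; each is a finite-index overgroup of $\mathbb{F}_n\times\mathbb{Z}^{k-1}$, the RAAG on the (chordal) join of an edgeless graph with a clique, and is therefore coherent by Droms and commensurability invariance.

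For the implication that chordality of $\Gamma$ and $\Gamma^{\leq 2}$ yields coherence, I induct on $|V|$ using a simplicial vertex $v$ of the chordal graph $\Gamma$, whose link $L=\Link(v)$ is a clique. Van der Lek's theorem \cite{van1983homotopy} gives the splitting $G_\Gamma=G_{\Gamma\setminus\{v\}}\ast_{G_L}G_{\Star(v)}$, where $G_{\Gamma\setminus\{v\}}$ is coherent by induction (induced subgraphs of chordal graphs are chordal, and $(\Gamma\setminus\{v\})^{\leq 2}$ is an induced subgraph of $\Gamma^{\leq 2}$) and $G_{\Star(v)}$ is a coherent clique parabolic by the previous paragraph. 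The edge group $G_L$ is a retract of both factors (even Artin groups retract onto standard parabolics, and $G_L$ is a direct factor of $G_{\Star(v)}$). I would then deduce coherence of the amalgam by the Bass--Serre analysis: every finitely generated subgroup is a finite graph of groups whose vertex groups are finitely presented by coherence of the factors, and whose edge groups, being subgroups of $G_L$, are finitely generated, so the total group is finitely presented.

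For the converse I argue contrapositively and produce a non-coherent parabolic whenever $\Gamma$ or $\Gamma^{\leq 2}$ is non-chordal. The clean case is an induced $4$-cycle $v_1v_2v_3v_4$ in $\Gamma^{\leq 2}$: the pair $\{v_1,v_3\}$ is joined to the pair $\{v_2,v_4\}$ by label-$2$ edges, so by the direct-product structure of parabolics over a join the relevant parabolic is $G_{\{v_1,v_3\}}\times G_{\{v_2,v_4\}}$, and each factor contains a non-abelian free group (the absent or thick diagonal gives $\mathbb{F}_2$ or $D_{2m}\supseteq\mathbb{F}_m$ with $m\ge 2$); thus it contains $\mathbb{F}_2\times\mathbb{F}_2$ and is non-coherent. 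For a longer induced cycle in $\Gamma^{\leq 2}$, or for an induced cycle of $\Gamma$ carrying a thick edge, the diagonal trick fails, and I would instead run Bestvina--Brady--Morse theory on the Salvetti-type complex of the even Artin group: the kernel of the map sending every generator to $1\in\mathbb{Z}$ is finitely generated because $\Gamma$ is connected, but is not finitely presented because the supporting induced cycle makes the relevant descending link non-simply-connected, giving a finitely generated but not finitely presented subgroup.

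The main obstacle is this last non-coherence case together with the coherence-preservation step of the induction. The edge groups $G_L$ may contain dihedral factors, hence are not polycyclic, which places the amalgam outside the scope of the standard graph-of-groups coherence theorems and forces one to control finite generation of the Bass--Serre vertex groups by hand; dually, adapting Bestvina--Brady Morse theory to even Artin groups, and pinning down exactly which connectivity is governed by $\Gamma$ and which by the thin subgraph $\Gamma^{\leq 2}$, is the delicate point on the non-coherence side. A cleaner route, should the machinery around Definition~\ref{defn: G_0} provide it, would be to show that the finite-index subgroup $G_0$ is itself a RAAG $A_\Lambda$ (as it is in the dihedral case, where $G_0\cong\mathbb{F}_n\times\mathbb{Z}$); the entire theorem would then reduce, via commensurability invariance and Droms, to the purely graph-theoretic equivalence between chordality of $\Lambda$ and simultaneous chordality of $\Gamma$ and $\Gamma^{\leq 2}$.
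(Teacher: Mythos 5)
Your overall skeleton (clique structure, the $\mathbb{F}_2\times\mathbb{F}_2$ obstruction from an induced square in $\Gamma^{\leq 2}$, an amalgam decomposition of chordal graphs) overlaps with the paper's, but there are two genuine gaps, both at the places you yourself flag as "the main obstacle," and neither is resolved by your outline.

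First, the sufficiency induction fails as stated. Splitting at a simplicial vertex gives $G_\Gamma=G_{\Gamma\setminus\{v\}}*_{G_L}G_{\Star(v)}$ where $L$ is a clique of $\Gamma$ that may contain a thick edge (e.g.\ a triangle $a,b,v$ with $m(\{a,b\})>2$ and $v$ simplicial), so $G_L\supseteq D_{2n}\supseteq\mathbb{F}_n$ with $n\geq 2$. Your claimed Bass--Serre conclusion --- that every finitely generated subgroup decomposes with finitely generated edge groups ``being subgroups of $G_L$'' --- is exactly the hypothesis of the Karrass--Solitar criterion (Proposition \ref{prop: facts coherent}(2)), and it is false here: $\mathbb{F}_2$ has subgroups that are not finitely generated, and the vertex groups of the induced decomposition of a finitely generated subgroup need not be finitely generated either. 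The paper avoids this by decomposing along a complete \emph{separator} $A$ rather than the link of a simplicial vertex, and then showing that chordality of $\Gamma^{\leq 2}$ forces $G_A$ to be abelian: a thick edge $\{a,b\}\subseteq A$ together with neighbours $x\in\Gamma_1$, $y\in\Gamma_2$ produces an induced square $a,x,b,y$ in $\Gamma^{\leq 2}$ (the triangles $xab$ and $yab$ force labels $2$ on the other edges). With $G_A$ finitely generated abelian, all of its subgroups are finitely generated and Karrass--Solitar applies. Your decomposition does not have this feature and you give no substitute argument.

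Second, the non-coherence direction for a non-chordal $\Gamma$ is deferred to an unproved Bestvina--Brady-type Morse argument on a ``Salvetti-type complex'' for even Artin groups; nothing in your outline establishes the needed connectivity statements. The paper's route is elementary: for an induced cycle $C$ of length $\geq 4$ in $\Gamma$, the kernel $K_C$ of the exponent-sum map $G_C\to\Z$ splits as $K_{T_1}*_{K_{\{x,y\}}}K_{T_2}$ with $K_{T_i}$ finitely generated free and $K_{\{x,y\}}$ the (non-finitely-generated) normal closure of $xy^{-1}$ in $\mathbb{F}_2$, so $K_C$ is finitely generated but not finitely presented by Baumslag's lemma (Proposition \ref{prop: facts coherent}(1)). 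Also note that your worry about \emph{long} induced cycles in $\Gamma^{\leq 2}$ is vacuous: once $\Gamma$ is chordal, an induced cycle of length $>4$ in $\Gamma^{\leq 2}$ would be triangulated in $\Gamma$ by thick chords, producing a triangle with two thick edges and violating the EAFC condition; so only the length-$4$ case (which you do handle correctly) occurs. Finally, your proposed shortcut via showing $G_0$ is a RAAG is speculative and not supported by anything in the paper.
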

Here $\Gamma^{\leq 2}$ denotes the graph obtained from $\Gamma$ by removing edges with label greater than 2.

We have stressed some similarities of  EAFC groups and RAAGs. 
However there are some differences.
It is known that there are  EAFC groups that can  not be virtually co-compactly cubulated.
Indeed, consider the graph $V=\{a,b,c,d\}$ and $E=\{\{a,b\}, \{a,c\}, \{b,d\}, \{c,d\}\}$
with $m(E)=4$.
Then, by \cite[Theorem 4.12.]{Haettel}, $G_\Gamma$ is not virtually co-compactly cubulated.
In particular, $G_\Gamma$ is not commensurable with a RAAG (i.e. $G_\Gamma$ does not have a finite index subgroup isomorphic to a finite index subgroup of a RAAG).
We do not know whether or not every EAFC group has a finite index subgroup that embeds on a RAAG.
In that case, a strongest Tits alternative could have been deduced from the RAAG case (Corollary \ref{cor:strongest}).

The paper is structured as follows. 
In Section \ref{sec: prelim} we collect several basic facts about EAFC groups, including a description of normalizers of parabolic subgroups due to E. Godelle and setting our notation for Bass-Serre theory.
In Section \ref{sec: roots} we collect some facts of our previos work \cite{antolin2022intersection} to prove Theorem \ref{thm: main roots}. In Section \ref{sec: virtually abelian subgroups} we characterize subgroups of EAFC groups not containig free subgroups, and with that, we finish in  Section \ref{sec: large} the proof of our Tits Alternative theorem (Theorem \ref{thm: main}). Finally in Section \ref{sec:coherence}, we sketch a proof for the characterization of coherent EAFC groups.

\section{Preliminaries and notation}
\label{sec: prelim}
In this paper, two types of graphs will be used: simplicial graphs and oriented graphs. 
Simplicial graphs will be only used for Artin--Tits systems and oriented graphs for graphs of groups and graphs with an action, so we will in general use only the term graph, and its nature (simplicial or oriented) should be clear from the context.

Recall that a {\it simplicial graph} $\Gamma=(V,E)$ consists of a non-empty set of vertices $V$ and a set of edges $E$ which is a subset of $\{\{x,y\}\mid x,y\in V, x\neq y\}$.
Given $v\in V$, the {\it link of $v$}, denoted $\Link_\Gamma(v)$, is the set of vertices $u$ of $V$ such that $\{v,u\}\in E$.
The {\it star of $v$}, denoted by $\Star_\Gamma(v)$, is the set $\Link_\Gamma(v)\cup \{v\}$.

Let $\Gamma =((V,E),m)$ be an Artin--Tits system and $G_\Gamma$ be the associated Artin--Tits group; for convenience we will usually denote $G_\Gamma$ just by $G$.
Sometimes, we will just say that $\Gamma$ is an {\it Artin graph} or {\it defining graph of} $G$, and implicitly denote by $m$ or $m_\Gamma$ the labeling function.

For any subset $S\subseteq V$, we denote by $G_S$ the {\it standard parabolic subgroup} $\langle S\rangle.$
As mentioned in the introduction, a theorem of Van der Lek \cite{van1983homotopy}  shows that $G_S\cong G_\Delta$ with the Artin--Tits system~$(\Delta =(V\Delta, E\Delta), m\mid_{E\Delta})$ given by the subgraph $\Delta$ of $\Gamma$ induced by $S$, and $m|_{E\Delta}$ is the restriction of $m$ to $E\Delta$.

For an even Artin--Tits system $\Gamma=((V,E),m)$, i.e. $m(E)\subseteq 2\N$, every standard parabolic subgroup is a retract.
For any $S \subseteq V$ one has a retraction $$\rho_S:G_{\Gamma}\longrightarrow G_S$$ defined on the generators of $G_{\Gamma}$ as:
$\rho_S(s) = s$ for $s\in S$, and $\rho_S(v) = 1$ for $v\in V\Gamma \setminus S$. 

Recall that a {\it parabolic subgroup} $P$ is a conjugate to a standard parabolic subgroup.
It is easy to see that parabolic subgroups are also retracts. 

We recall the main theorem of \cite[Theorem 1.1.]{antolin2022intersection}
\begin{theorem}\label{thm: intersection of parabolics is parabolic}
Let $\Gamma$ be an finite EAFC system. 
The intersection of parabolic subgroups of $G_\Gamma$ is again parabolic.
\end{theorem}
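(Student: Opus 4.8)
The plan is to prove the statement by induction on the number of vertices $|V|$, after two reductions. First I would reduce an arbitrary intersection $\bigcap_{i\in I}P_i$ to a pairwise one: granting that the intersection of two parabolics is parabolic, the finite sub-intersections $\bigcap_{i\in F}P_i$ are all parabolic (induction on $|F|$) and form a downward-directed family, so it suffices to see that descending chains of parabolics stabilise. For this I would use the structural fact that a proper containment of parabolics must strictly decrease the rank of the defining subgraph — since a rank-$k$ parabolic sitting inside another rank-$k$ parabolic is forced to coincide with it — giving a bound of $|V|$ on the length of any strictly descending chain; hence the directed family attains a minimum, which equals the full intersection. Second, conjugating, the pairwise case reduces to showing that $H\cap G_S$ is parabolic, where $H$ is parabolic and $G_S$ is a \emph{standard} parabolic over some $S\subseteq V$. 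Moreover, choosing $v\in V - S$ (possible unless $S=V$, which is trivial) we have $G_S\le G_{\Gamma-v}$, which will let us arrange $G_S$ to be elliptic for the splitting used below.

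For the base case $\Gamma$ is complete. By Lemma \ref{lem: EAFC over complete graphs}, $G_\Gamma$ is a direct product of even Artin groups on one or two generators, i.e.\ of copies of $\Z$ and of dihedral groups $D_{2n}$, with the factors supported on a partition of $V$. Conjugation in a direct product acts coordinatewise, so every parabolic is a product of parabolics of the factors, intersections split as products, and it suffices to treat a single factor. The factor $\Z$ is immediate; for $D_{2n}$ I would use the normal subgroup $\mathbb{F}_n\times\Z$ of Lemma \ref{lem: D2n is Fntimes Z by Cn} together with Godelle's description of normalizers of parabolics to verify that two distinct conjugates of a cyclic standard parabolic meet trivially, which pins down all pairwise intersections inside a factor.

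For the inductive step, with $\Gamma$ not complete, I would fix the chosen $v\in V - S$ and use the visual splitting of $G_\Gamma$ as the fundamental group of a graph of groups with vertex groups $G_{\Gamma-v}$ and $G_{\Star_\Gamma(v)}=G_{\Link_\Gamma(v)}\times\langle v\rangle$ amalgamated over $G_{\Link_\Gamma(v)}$, acting on the Bass--Serre tree $T$ with base vertex $x_0$ stabilised by $G_{\Gamma-v}$. Since $G_S\le G_{\Gamma-v}=\operatorname{Stab}(x_0)$, the subgroup $H\cap G_S$ is elliptic. I then run the dichotomy on $H$. If $H$ is elliptic it fixes some vertex $y$. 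When $y=x_0$ we have $H\le G_{\Gamma-v}$, and the retraction $\rho_{\Gamma-v}$ (available because $\Gamma$ is even) fixes $H$ and carries it to a parabolic of the smaller group $G_{\Gamma-v}$; then $H\cap G_S$ is an intersection of two parabolics inside $G_{\Gamma-v}$, and van der Lek's theorem together with the inductive hypothesis on $|V|-1$ vertices finishes. When $y\neq x_0$, the elliptic group $H\cap G_S$ fixes the geodesic $[x_0,y]$, hence an edge, and so lies in a conjugate of the edge group $G_{\Link_\Gamma(v)}$, a parabolic over the strictly smaller graph $\Link_\Gamma(v)$; intersecting inside that conjugate and applying induction again closes this case.

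The genuinely delicate case is when $H$ does \emph{not} fix a vertex of $T$; parabolic subgroups such as $G_{\{u,w\}}\cong\mathbb{F}_2$ (for non-adjacent $u,w$) show this really occurs, and I expect the main obstacle to lie here. The heart of the matter is to understand $H\cap G_S=(H\cap G_{\Gamma-v})\cap G_S$, i.e.\ to show that the \emph{point stabiliser} $H\cap\operatorname{Stab}(x_0)=H\cap G_{\Gamma-v}$ is itself parabolic; once that is known, it is a parabolic contained in $G_{\Gamma-v}$ and intersecting it with the standard parabolic $G_S$ reduces to the smaller group by induction. To identify $H\cap G_{\Gamma-v}$ I would analyse the induced graph-of-groups decomposition of the Artin group $H$ coming from its action on $T$, whose vertex and edge stabilisers are again parabolic, and combine this with Godelle's normalizer description to exhibit the minimal parabolic carrying $H\cap G_{\Gamma-v}$. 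This last identification — producing an honest parabolic rather than merely a subgroup contained in one — is where the even/FC hypotheses, through the retractions and the controlled normalizer structure, do the essential work.
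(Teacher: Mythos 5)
First, a point of record: the paper does not actually prove this statement here --- it is quoted from the authors' earlier work \cite{antolin2022intersection} (Theorem 1.1 there), so there is no in-text proof to compare against; your proposal has to stand on its own. Its outer layers are fine: the reduction from arbitrary to pairwise intersections via a descending chain condition is sound (a proper containment of parabolics does strictly shrink the defining vertex set, by the same retraction trick that appears in Lemma \ref{lemma: parabolic_inclusion_retractions}), and the base case over complete graphs can be made to work. The proposal fails, however, exactly where you locate the ``heart of the matter'', and it fails for a structural reason, not a missing detail.

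In the non-elliptic case you must show that $H\cap G_{\Gamma- v}$ is parabolic for a parabolic $H$ acting loxodromically on $T$, and your route is to use the induced graph-of-groups decomposition of $H$, ``whose vertex and edge stabilisers are again parabolic''. But those stabilisers are precisely $H\cap uG_{\Gamma- v}u^{-1}$, $H\cap uG_{\Star(v)}u^{-1}$ and $H\cap uG_{\Link(v)}u^{-1}$, i.e.\ intersections of two parabolic subgroups: asserting that they are parabolic is assuming the theorem. The same circularity infects the elliptic sub-case $y\neq x_0$: writing $H\cap G_S=(H\cap wG_{\Link(v)}w^{-1})\cap(G_S\cap wG_{\Link(v)}w^{-1})$ and ``applying induction inside $G_{\Link(v)}$'' requires knowing that each factor is already a parabolic of $G_{\Link(v)}$, which is again the statement being proved. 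More fundamentally, an induction on $|V|$ alone has no mechanism to make progress in the hard case, because a loxodromic $H$ is not carried into any proper subgraph. The actual argument of \cite{antolin2022intersection} --- whose machinery survives in this paper as Notation \ref{description kernel} and Lemma \ref{lem: description kernel} --- escapes this by passing to the kernel $G_\Omega$ of the retraction $\rho_{\{v\}}$, an even FC-type system over an infinite graph with strictly fewer edges of label greater than $2$, and inducting on that quantity rather than on $|V|$. That is the missing idea; Godelle's normalizer description and Bass--Serre theory, as deployed in your sketch, do not substitute for it.
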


We remark that being parabolic subgroup of an Artin--Tits group is a property of a group and the defining Artin--Tits system. 
We tend to omit the dependency of the Artin--Tits system.
The next lemma guarantees that being parabolic is well behaved under passing to standard parabolic subgroups.

\begin{lemma}\label{lemma: parabolic_inclusion_retractions}
Let $P$ be a parabolic subgroup of the EAFC group $G_{\Gamma}$, and $\Delta$ a subgraph of $\Gamma$. 
Then $P \cap G_{\Delta}$ is parabolic in $G_{\Delta}$.
\end{lemma}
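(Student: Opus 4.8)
The plan is to enlarge the problem to $G_\Gamma$, apply the intersection theorem there, and then descend back to $G_\Delta$ using the even retraction. First I would note that the standard parabolic subgroup $G_\Delta$ is in particular a parabolic subgroup of $G_\Gamma$ (a standard parabolic is the conjugate of itself by the identity). Hence $P\cap G_\Delta$ is the intersection of two parabolic subgroups of $G_\Gamma$, and by Theorem \ref{thm: intersection of parabolics is parabolic} it is parabolic in $G_\Gamma$. I would then fix a description $P\cap G_\Delta = h\,G_T\,h^{-1}$ with $h\in G_\Gamma$ and $T\subseteq V$, coming from this parabolicity in the ambient group.

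The key step is to apply the retraction $\rho_\Delta\colon G_\Gamma\to G_\Delta$ (the map $\rho_{V\Delta}$ from the preliminaries) to this description. On the one hand, since $P\cap G_\Delta$ is contained in $G_\Delta$ and $\rho_\Delta$ restricts to the identity on $G_\Delta$, we have $\rho_\Delta(P\cap G_\Delta)=P\cap G_\Delta$. On the other hand, because $\rho_\Delta$ is a homomorphism sending each generator in $V\Delta$ to itself and each remaining generator to $1$, it satisfies $\rho_\Delta(G_T)=G_{T\cap V\Delta}$. Combining these two facts yields
\[
P\cap G_\Delta \;=\; \rho_\Delta\bigl(h\,G_T\,h^{-1}\bigr) \;=\; \rho_\Delta(h)\,G_{T\cap V\Delta}\,\rho_\Delta(h)^{-1}.
\]

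To conclude, I would read off the parabolic structure inside $G_\Delta$. Setting $g\coloneqq\rho_\Delta(h)\in G_\Delta$ and $T'\coloneqq T\cap V\Delta\subseteq V\Delta$, the subgroup $G_{T'}$ is a standard parabolic subgroup of $G_\Delta$; here one uses Van der Lek's theorem to see that the standard parabolic on $T'$ computed inside $G_\Delta$ coincides with the one computed inside $G_\Gamma$. Since moreover $g\in G_\Delta$, the equality $P\cap G_\Delta = g\,G_{T'}\,g^{-1}$ exhibits $P\cap G_\Delta$ as a $G_\Delta$-conjugate of a standard parabolic subgroup of $G_\Delta$, that is, as a parabolic subgroup of $G_\Delta$.

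The only substantial obstacle is the first step, which rests on the nontrivial Theorem \ref{thm: intersection of parabolics is parabolic}; once $P\cap G_\Delta$ is known to be parabolic in the ambient group, the descent through $\rho_\Delta$ is essentially formal, the single point requiring care being the behaviour of the even retraction, namely that it fixes $G_\Delta$ pointwise and carries a standard parabolic $G_T$ onto the standard parabolic $G_{T\cap V\Delta}$.
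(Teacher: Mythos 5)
Your proposal is correct and follows essentially the same route as the paper: both first invoke Theorem \ref{thm: intersection of parabolics is parabolic} to see that $P \cap G_\Delta$ is parabolic in the ambient group $G_\Gamma$, and then apply the retraction $\rho_\Delta$ to the resulting expression $h G_T h^{-1}$, using that $\rho_\Delta$ fixes $P\cap G_\Delta$ pointwise and sends $G_T$ to $G_{T\cap V\Delta}$, to land on a $G_\Delta$-conjugate of a standard parabolic of $G_\Delta$. No gaps; the argument matches the paper's proof step for step.
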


\begin{proof}
Both $P$ and $G_{\Delta}$ are parabolic subgroups of $G_{\Gamma}$, so from Theorem \ref{thm: intersection of parabolics is parabolic}, it follows that $P \cap G_{\Delta}$ is again parabolic in $G_{\Gamma}$. Denote $P \cap G_{\Delta} = g G_Q g^{-1}$ for some $g \in G_{\Gamma}$ and some $Q \subseteq \Gamma$. As $g G_Q g^{-1} = P \cap G_{\Delta} \leqslant G_{\Delta}$ by applying the retraction $\rho_{\Delta}$ one obtains:
$$g G_Q g^{-1} 
=
\rho_{\Delta}(g G_Q g^{-1}) 
=
\rho_{\Delta}(g) \rho_{\Delta}(G_Q) \rho_{\Delta}(g)^{-1} 
= 
\rho_{\Delta}(g) G_{\Delta \cap Q} \rho_{\Delta}(g)^{-1}
= 
h G_{U} h^{-1},$$
for $U = \Delta \cap Q \subseteq \Delta$, and $h = \rho_{\Delta}(g) \in G_{\Delta}$, meaning that $g G_Q g^{-1}$ is parabolic in $G_{\Delta}$, as we wanted.
\end{proof}

Let $(\Gamma,m)$ be a finite EAFC system and $S\subseteq G_{\Gamma}$. 
The  {\it parabolic closure of $S$ in $G_{\Gamma}$}, is the set
$$P^{\Gamma}(S)= \bigcap\limits_{\substack{P \text{ parabolic in }G_\Gamma \\ S \subseteq P}} P.$$

By Theorem \ref{thm: intersection of parabolics is parabolic} for any finitely generated  EAFC group $G_{\Gamma}$, and any set $S \subseteq G_{\Gamma}$, one has that $P^{\Gamma}(S)$ is a parabolic subgroup of $G_{\Gamma}$.

\begin{lemma}\label{lem: conjugation and parabolic closure}
Using the notation above, one has:
\begin{enumerate}
    \item[{\rm (1)}]  $P^{\Gamma}(gSg^{-1})=gP^{\Gamma}(S)g^{-1}$, for any $g\in G_\Gamma$ and any $S\subseteq G_\Gamma$.
    \item[{\rm (2)}] If $\Delta $ is a subgraph of $\Gamma$ and $S\subset G_\Delta$, then  $P^\Delta(S)=P^\Gamma(S)$.
\end{enumerate}
\end{lemma}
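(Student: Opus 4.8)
The plan is to handle the two parts separately, since (1) is a formal consequence of conjugation being an automorphism, while (2) rests on the relationship between parabolic subgroups of $G_\Delta$ and of $G_\Gamma$.

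For (1), I would observe that conjugation by a fixed $g\in G_\Gamma$ is an automorphism of $G_\Gamma$ that permutes parabolic subgroups: if $P=hG_Qh^{-1}$ is parabolic then $gPg^{-1}=(gh)G_Q(gh)^{-1}$ is again parabolic, and applying the same to $g^{-1}$ shows that $P\mapsto gPg^{-1}$ is a bijection on the set of parabolic subgroups. Moreover $S\subseteq P$ if and only if $gSg^{-1}\subseteq gPg^{-1}$, so this bijection carries the family $\{P \text{ parabolic}: S\subseteq P\}$ onto $\{Q \text{ parabolic}: gSg^{-1}\subseteq Q\}$. Since conjugation commutes with intersections, taking the intersection over each family yields $gP^{\Gamma}(S)g^{-1}=P^{\Gamma}(gSg^{-1})$, as desired.

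For (2), I would prove two inclusions. The preliminary observation is that every parabolic subgroup of $G_\Delta$ is also parabolic in $G_\Gamma$: such a subgroup has the form $hG_Qh^{-1}$ with $Q\subseteq\Delta$ and $h\in G_\Delta\leqslant G_\Gamma$, and by Van der Lek's theorem $G_Q$ is the same standard parabolic whether computed inside $\Delta$ or inside $\Gamma$. In particular $P^{\Delta}(S)$, which is parabolic in $G_\Delta$ by Theorem \ref{thm: intersection of parabolics is parabolic} applied to the EAFC group $G_\Delta$, is a parabolic subgroup of $G_\Gamma$ containing $S$; as $P^{\Gamma}(S)$ is the intersection of all such subgroups, this gives $P^{\Gamma}(S)\subseteq P^{\Delta}(S)$.

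For the reverse inclusion I would invoke Lemma \ref{lemma: parabolic_inclusion_retractions}: the intersection $P^{\Gamma}(S)\cap G_\Delta$ is parabolic in $G_\Delta$, and since $S\subseteq P^{\Gamma}(S)$ and $S\subseteq G_\Delta$ we have $S\subseteq P^{\Gamma}(S)\cap G_\Delta$. By minimality of $P^{\Delta}(S)$ among parabolics of $G_\Delta$ containing $S$, it follows that $P^{\Delta}(S)\subseteq P^{\Gamma}(S)\cap G_\Delta\subseteq P^{\Gamma}(S)$. Combining the two inclusions gives $P^{\Delta}(S)=P^{\Gamma}(S)$. The only genuinely delicate point, which I would make explicit, is the compatibility of the two notions of ``parabolic'' across the inclusion $G_\Delta\leqslant G_\Gamma$ — namely that standard parabolics of the induced subsystem $\Delta$ coincide with the corresponding standard parabolics of $\Gamma$; once this is pinned down via Van der Lek's theorem, both inclusions follow immediately from the results already available.
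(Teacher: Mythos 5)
Your proposal is correct and follows essentially the same route as the paper: part (1) via conjugation permuting the parabolic subgroups containing $S$ (the paper phrases this as two inclusions, applying the argument to $g$ and then to $g^{-1}$, which amounts to the same thing), and part (2) via the two inclusions using that parabolics of $G_\Delta$ are parabolic in $G_\Gamma$ for one direction and Lemma \ref{lemma: parabolic_inclusion_retractions} for the other. No gaps.
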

\begin{proof} 
(1). As $S \subseteq P^{\Gamma}(S)$, for any $g \in G_{\Gamma}$, one has $g S g^{-1} \subseteq g P^{\Gamma}(S) g^{-1}$. Since $g P^{\Gamma}(S) g^{-1}$ is a parabolic subgroup of $G_\Gamma$ containing $g S g^{-1}$, one obtains $P^{\Gamma}(g S g^{-1}) \subseteq g P^{\Gamma}(S) g^{-1}$. For the other inclusion, we have:
    $$g P^{\Gamma}(S) g^{-1} = g P^{\Gamma}(g^{-1} (g S g^{-1}) g) g^{-1}\subseteq g [ g^{-1} (P^{\Gamma}(g S g^{-1})) g]g^{-1} = P^{\Gamma}(g S g^{-1}).$$

(2). Any parabolic subgroup of $G_\Delta$ is also a parabolic subgroup of $G_\Gamma$, which implies that $P^\Gamma(S) \subseteq P^\Delta(S)$. On the other hand, for any parabolic subgroup $P$ of $G_\Gamma$ containing $S$, the subgroup $P \cap G_\Delta$ is parabolic in $G_\Delta$ which contains~$S$ (see Lemma \ref{lemma: parabolic_inclusion_retractions}). This implies $P^\Delta(S) \subseteq P^\Gamma(S)$ and the equality follows.
\end{proof}

When the ambient defining graph is well understood, we write $P(S)$ instead of $P^\Gamma(S)$ to denote the parabolic closure of $S$ in $G_\Gamma$.

\begin{lemma}\label{lem: normalizer  subsgroup of normalizer of Parabolic Closure}
Let $G_\Gamma$ be an EAFC group. For any $S\subseteq G_\Gamma$, $N_G(S):=\{g\in G \mid g S g^{-1} = S\}\leqslant N_G(P(S))$.
\end{lemma}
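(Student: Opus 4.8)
The plan is to reduce everything to the conjugation-equivariance of the parabolic closure established in Lemma \ref{lem: conjugation and parabolic closure}(1). The statement to prove is that every element $g$ that normalizes the set $S$ (in the sense $gSg^{-1}=S$) also normalizes the parabolic subgroup $P(S)$. Since the parabolic closure is a canonical construction depending only on $S$, it ought to commute with the symmetry of $S$ given by conjugation by such a $g$.

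Concretely, I would take an arbitrary $g\in N_G(S)$, so that $gSg^{-1}=S$. Applying the parabolic closure operator to both sides gives $P(gSg^{-1})=P(S)$. On the other hand, Lemma \ref{lem: conjugation and parabolic closure}(1) computes the left-hand side as $P(gSg^{-1})=gP(S)g^{-1}$. Combining these two identities yields $gP(S)g^{-1}=P(S)$, which is exactly the assertion $g\in N_G(P(S))$. As $g$ was arbitrary, this proves $N_G(S)\leqslant N_G(P(S))$.

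There is essentially no serious obstacle here: the entire content is packaged into the equivariance property $P(gSg^{-1})=gP(S)g^{-1}$, which in turn relies on Theorem \ref{thm: intersection of parabolics is parabolic} guaranteeing that parabolic closures exist and are themselves parabolic. The only thing to be careful about is that $N_G(S)$ is defined using the strict setwise stabilizer condition $gSg^{-1}=S$ (an equality, not merely an inclusion), since it is precisely this equality that lets us apply $P$ to both sides and obtain matching subgroups rather than just one containment.
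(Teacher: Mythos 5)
Your proposal is correct and is essentially identical to the paper's own proof: both apply the parabolic closure to the equality $gSg^{-1}=S$ and then use the equivariance $P(gSg^{-1})=gP(S)g^{-1}$ from Lemma \ref{lem: conjugation and parabolic closure}(1) to conclude $gP(S)g^{-1}=P(S)$. No gaps.
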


\begin{proof}
Let $g \in N_G(S)$, i.e. $gSg^{-1} = S$. 
Taking parabolic closures and applying Lemma \ref{lem: conjugation and parabolic closure} we obtain:
$$P(S) = P(gSg^{-1}) = g P(S) g^{-1},$$
hence $g \in N_G(P(S)).$
\end{proof}

We now explore the structure of EAFC groups based on complete graphs.
We start with the even dihedral Artin--Tits groups.
Recall that we use $\mathbb{F}_n$ to denote the non-abelian free group of rank $n$. 
We also use $C_n$ to denote the cyclic group of order $n$.

The following lemma is well-known.
\begin{lemma}\label{lem: D2n is Fntimes Z by Cn}
The subgroup $N=\langle (ab)^n, a, (ab)a(ab)^{-1},\dots, (ab)^{n-1}a(ab)^{-n+1}\rangle$, of the dihedral Artin group $D_{2n}=\langle a,b \mid (ab)^n = (ba)^n\rangle$, is normal and isomorphic to $\mathbb{F}_n\times \mathbb{Z}$; moreover the quotient of $D_n$ by $N$ is  isomorphic to $C_n$. 
\end{lemma}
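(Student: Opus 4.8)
The plan is to pass to the more convenient generating set $\{a,z\}$ with $z=ab$, identify a central infinite-cyclic subgroup, and then read off the structure of $N$ from a Reidemeister--Schreier computation.

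First I would record that $\Delta := (ab)^n = (ba)^n$ is central in $D_{2n}$. Comparing the two alternating words of length $2n+1$ gives $a(ba)^n = (ab)^n a$ already in the free group, so $a\Delta = \Delta a$, and symmetrically $b\Delta = \Delta b$; hence $\Delta$ is central, and since the total-exponent homomorphism $D_{2n}\to\mathbb{Z}$ sends it to $2n\neq 0$, it has infinite order. Rewriting the defining relation in the free generating set $\{a,z\}$ of $\langle a,b\rangle$ (so that $b=a^{-1}z$), the relation $(ab)^n=(ba)^n$ becomes $z^n = a^{-1}z^n a$, i.e. $D_{2n}=\langle a,z \mid [a,z^n]=1\rangle$ with $\Delta=z^n$. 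I would also fix the surjection $\psi\colon D_{2n}\to C_n$ determined by $a\mapsto 0$, $z\mapsto 1$ (equivalently $a\mapsto 0$, $b\mapsto 1$); it is well defined because the relator $[a,z^n]$ lies in its kernel, and $\ker\psi$ is normal of index $n$.

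Next I would apply Reidemeister--Schreier to $\ker\psi$ using the Schreier transversal $T=\{1,z,\dots,z^{n-1}\}$ and the generating set $\{a,z\}$. Since $\psi(a)=0$, the letter $a$ contributes the Schreier generators $z^k a z^{-k}=a_k$ for $k=0,\dots,n-1$, while $z$ contributes only the single nontrivial Schreier generator $z^n=\Delta$ (arising from the coset $z^{n-1}$). These coincide exactly with the generators listed for $N$, so $N=\ker\psi$; in particular $N$ is normal of index $n$ and $D_{2n}/N\cong C_n$.

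Finally I would rewrite the conjugates $t\,[a,z^n]\,t^{-1}$, $t\in T$, of the relator in terms of the Schreier generators. Starting the rewriting at the coset $z^j$, the block $z^{\pm n}$ contributes exactly one factor $\Delta^{\pm1}$ (the transversal winds once around the cycle of cosets) and the letters $a^{\pm1}$ contribute $a_j^{\pm1}$, so the rewrite collapses to $[a_j,\Delta]$. Hence
$$N = \langle a_0,\dots,a_{n-1},\Delta \mid [a_j,\Delta]=1,\ j=0,\dots,n-1\rangle,$$
which is the standard presentation of $\langle a_0,\dots,a_{n-1}\rangle\times\langle\Delta\rangle\cong \mathbb{F}_n\times\mathbb{Z}$. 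The main obstacle is the bookkeeping in the rewriting: one must track the coset at each letter to confirm that reading $z^n$ (resp. $z^{-n}$) produces precisely one $\Delta$ (resp. $\Delta^{-1}$) independently of the starting coset $z^j$, so that the surviving relations are exactly the commutators $[a_j,\Delta]$ and no relation among the $a_j$ remains. Once this is verified, recognizing the resulting presentation as $\mathbb{F}_n\times\mathbb{Z}$ is routine.
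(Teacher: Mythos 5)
Your proof is correct. Both you and the paper begin identically, by passing to the presentation $D_{2n}\cong\langle a,z\mid z^n=a^{-1}z^na\rangle$ with $z=ab$ and identifying $N$ as the kernel of the map onto $C_n$ sending $a\mapsto 0$, $z\mapsto 1$; the difference lies in how the structure of that kernel is extracted. The paper first quotients by the central subgroup $H=\langle z^n\rangle$ to get $\Z * C_n$, reads off that the kernel of the induced map to $C_n$ is free with basis $\{z^iaz^{-i}\mid 0\le i<n\}$, and then observes that $N$, being a central extension of $\mathbb{F}_n$ by $\Z$, must split as $\mathbb{F}_n\times\Z$. You instead run Reidemeister--Schreier directly on the one-relator presentation with transversal $\{1,z,\dots,z^{n-1}\}$, obtaining the explicit presentation $\langle a_0,\dots,a_{n-1},\Delta\mid [a_j,\Delta]=1\rangle$. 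Your route is more computational but entirely self-contained: it does not invoke the structure of kernels of retractions of free products, nor the fact that a central extension of a free group is a direct product, and it produces the presentation of $N$ (hence its isomorphism type and the matching of generators) in one pass. The paper's route is shorter on the page but leaves the splitting of the central extension implicit. Your bookkeeping claim --- that each block $z^{\pm n}$ in the rewritten relator $z^j[a,z^n]z^{-j}$ contributes exactly one $\Delta^{\pm1}$ regardless of $j$, because the transversal winds once around the coset cycle --- checks out, so the surviving relations are exactly the commutators $[a_j,\Delta]$ and no relation among the $a_j$ remains.
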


\begin{proof}
Set $G = D_{2n}$, and note that $G\cong \langle \, a, \, x \mid x^n = a^{-1}x^na \, \rangle$ via $a \longleftrightarrow a,  ab \longleftrightarrow x$. 

The subgroup $H = \langle\, x^n\,\rangle$ is normal in $G$ as $x^n$ commutes with $a$, and the quotient is equal to:
$$G/H = \langle \, a, \, x \mid x^{n} = 1\,\rangle \cong \langle \, a \, \rangle * \langle \, x \mid x^{n} = 1\,\rangle \cong \Z * C_{n}.$$
Consider $\varphi: G/H\longrightarrow C_{n}$, defined by $\varphi(a) = 0$ and $\varphi(x) = 1$. 
The kernel of $\varphi$ is isomorphic to $\mathbb{F}_n$ with basis $\{x^iax^{-i} \mid 0 \leq i < n\}$. 

Since $H \cong \Z$ is central in $G$ the kernel of the composition $G \longrightarrow G/H\longrightarrow C_{n}$ is equal to $\mathbb{F}_n \times \Z$;
note that this kernel is exactly $N$. 
\end{proof}

The subgroups  of the following definition are the ones that appears in Theorem \ref{thm: main}.
We will show that such subgroups exist immediately.
\begin{mydef}\label{defn: G_0}
Let $\Gamma$ be a finite EAFC system and $G_\Gamma$ the corresponding group.
An {\it appropriate  subgroup}  is a normal subgroup $G_0\leqslant G_\Gamma$ of finite index such that for all 2-generated parabolic subgroup $P$, $P\cap G_0$ satisfies the following alternative: every subgroup of $P\cap G_0$ is either a subgroup of $\mathbb{Z}^2$ or maps onto $\mathbb{F}_2$.
\end{mydef}

\begin{remark}\label{rem: G_0 standard}
If $G_S$ is an standard parabolic of $G_\Gamma$ and $G_0$ is an appropriate subgroup of $G_\Gamma$, then $G_0\cap G_S$ is an appropriate subgroup for $G_S$. Indeed, $G_0\cap G_S$ is finite index and normal in $G_S$. 
Moreover, every 2-generator parabolic  subgroup $P$ of $G_S$ is also a parabolic subgroup of $G_\Gamma$ and henge $P\cap (G_S\cap G_0)=P\cap G_0$ satisfies the required alternative.
\end{remark}

\begin{lemma}\label{lem: G_0}
Let $\Gamma=((V,E),m)$ be a finite EAFC system. 
Then  an appropriate  subgroup $G_0$  of index $[G_\Gamma :G_0]\leq  \prod_{e\in E} \frac{m(e)}{2}$ exists.
\end{lemma}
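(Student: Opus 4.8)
The plan is to build $G_0$ as a common kernel of one abelian quotient per edge. For each edge $e=\{u,v\}\in E$ write $n_e=\frac{m(e)}{2}$, so that by Van der Lek's theorem the standard parabolic $G_{\{u,v\}}$ is the dihedral Artin group $D_{m(e)}$. Composing the even retraction $\rho_{\{u,v\}}\colon G_\Gamma\to G_{\{u,v\}}=D_{m(e)}$ with the quotient map $D_{m(e)}\to C_{n_e}$ of Lemma \ref{lem: D2n is Fntimes Z by Cn} produces a homomorphism $\phi_e\colon G_\Gamma\to C_{n_e}$. I would then set
$$G_0=\bigcap_{e\in E}\ker\phi_e,$$
which is normal and of finite index in $G_\Gamma$. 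The diagonal homomorphism $(\phi_e)_{e\in E}\colon G_\Gamma\to \prod_{e\in E}C_{n_e}$ has kernel exactly $G_0$, so $G_\Gamma/G_0$ embeds into the product and
$$[G_\Gamma:G_0]\leqslant \prod_{e\in E}|C_{n_e}|=\prod_{e\in E}\frac{m(e)}{2},$$
giving the claimed index bound.

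It remains to check that $G_0$ is appropriate in the sense of Definition \ref{defn: G_0}, i.e. that for every $2$-generated parabolic $P$ the intersection $P\cap G_0$ satisfies the strongest Tits alternative. First I would reduce to standard parabolics: writing $P=gG_Sg^{-1}$ with $|S|\leqslant 2$, normality of $G_0$ gives $P\cap G_0=g(G_S\cap G_0)g^{-1}$, so it suffices to treat $G_S\cap G_0$ for $|S|\leqslant 2$. When $|S|\leqslant 1$ we have $G_S\cong\mathbb{Z}$, and when $S=\{u,v\}$ is not an edge we have $G_S\cong\mathbb{F}_2$; in both cases every subgroup of $G_S\cap G_0$ is already either a subgroup of $\mathbb{Z}^2$ or a non-abelian free group (hence maps onto $\mathbb{F}_2$), so the alternative holds without any further work.

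The only substantive case is $S=\{u,v\}=e$ an edge, where $G_S=D_{m(e)}$. Here the key observation is that $\rho_{\{u,v\}}$ restricts to the identity on $G_{\{u,v\}}$, so $\phi_e$ restricted to $G_S$ is precisely the quotient $D_{m(e)}\to C_{n_e}$ of Lemma \ref{lem: D2n is Fntimes Z by Cn}; hence $G_S\cap\ker\phi_e=N_e$, the normal subgroup isomorphic to $\mathbb{F}_{n_e}\times\mathbb{Z}$. Consequently $G_S\cap G_0\leqslant N_e\cong\mathbb{F}_{n_e}\times\mathbb{Z}$. Since $\mathbb{F}_{n_e}\times\mathbb{Z}$ is the RAAG on a star graph, Corollary \ref{cor:strongest} applies: every subgroup of $G_S\cap G_0$ is either free abelian or maps onto $\mathbb{F}_2$, and projecting a free abelian subgroup to the $\mathbb{F}_{n_e}$ factor (whose image is cyclic) shows it has rank at most $2$, hence embeds into $\mathbb{Z}^2$. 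This verifies the required alternative for $G_S\cap G_0$, completing the proof.

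I expect the main obstacle to be the verification in this last case, specifically pinning down that $\phi_e$ cuts $G_{\{u,v\}}$ exactly along $N_e$ so that $G_S\cap G_0$ lands inside a group to which the RAAG alternative of Corollary \ref{cor:strongest} can be applied, together with the rank-$\leqslant 2$ bound for its free abelian subgroups; the index estimate and the reduction to standard parabolics are routine.
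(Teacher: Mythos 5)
Your construction is exactly the paper's: $G_0=\bigcap_{e\in E}\rho_e^{-1}(N_e)$ with $N_e\cong\mathbb{F}_{n_e}\times\mathbb{Z}$ from Lemma \ref{lem: D2n is Fntimes Z by Cn}, the same index bound via the diagonal map to $\prod_e C_{n_e}$, and the same reduction to $G_e\cap G_0\leqslant N_e$ by normality; the argument is correct. The one thing to fix is the appeal to Corollary \ref{cor:strongest}: inside this paper that corollary is deduced from Theorem \ref{thm: main}, which in turn relies on Lemma \ref{lem: G_0}, so you should either cite it as the external result of Antol\'in--Minasyan or (better, as the paper does) note directly that a subgroup $H\leqslant\mathbb{F}_{n}\times\mathbb{Z}$ either has cyclic image in $\mathbb{F}_{n}$, whence $H\leqslant\mathbb{Z}^2$, or has non-abelian free image, whence $H$ maps onto $\mathbb{F}_2$.
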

\begin{proof}
If $E$ is empty, we can take $G_0$ to be $G_\Gamma$.

For each $e=\{a,b\}\in E\Gamma$, fix a finite index subgroup $N_e$ of $G_e$ of index $m(e)/2$ as described in the previous lemma. 
Note that $N_e$ is not uniquely defined and there are two possibilities. 
Then $N_e\cong \mathbb{F}_n\times \mathbb{Z}$ (note that $n$ is allowed to be $1$).
Clearly every subgroup of $N_e$ is either a subgroup of $\mathbb{Z}^2$ or maps onto $\mathbb{F}_2$.
 
For $E$ non-empty, define 
\[
G_0=\cap_{e\in E\Gamma} \rho^{-1}_e(N_e),
\]
where $\rho_e\colon G_\Gamma\to G_e$ is the canonical retraction.
Then $G_0$ is normal in $G_\Gamma$ and  of index $\prod_{e\in E} \frac{m(e)}{2}.$
If $P$ is a parabolic subgroup on two generators, either $P$ is free of rank 2 (and hence every subgroup of $P\cap G_0$ are free) or $P$ is conjugated to some $G_e$, and since $G_0$ is normal, every subgroup of $P\cap G_0$ is isomorphic to a subgroup of $G_e$. 
\end{proof}

\begin{lemma}\label{lem: EAFC over complete graphs}
Let $(\Gamma=(V,E),m)$ be a EAFC system with $\Gamma$ finite and complete. 
The group $G_{\Gamma}$ is a direct product of dihedral Artin--Tits groups and $\mathbb{Z}$'s.
\end{lemma}
\begin{proof}
If  all the edges of $\Gamma$ are labeled with $2$'s then $G_\Gamma$ is isomorphic to the group  $\mathbb{Z}^{|V|}$. 
Next, assume that there is an edge with vertices $a,b$, which is labeled by an even number $k \geq 4$. 
Now, any other vertex $c$ of the graph forms a triangle with $a,b$, because our graph is complete. 
In this triangle the other two edges should be labeled with $2$'s, hence any other generator commutes with $a,b$. 
We have that $G_{\Gamma}=G_{\{a,b\}}\times G_{\Gamma \setminus \{a,b\}}$. 
Now, the subgraph $\Gamma \setminus \{a,b\}$ is still complete, and using an inductive hypothesis we know that the Artin--Tits group based on $\Gamma \backslash \{a,b\}$ is a direct product of even dihedral Artin groups and $\mathbb{Z}$'s; hence our original group is as well.
\end{proof}

We will also need the following well-known fact.
\begin{lemma}\label{lem: D2n is Fn by Z}
The group $D_{2n}=\langle a,b \mid (ab)^n = (ba)^n\rangle$ has a normal subgroup isomorphic to $\mathbb{F}_n$, and the quotient of $D_n$ by this subgroup is isomorphic to $\Z$. 
\end{lemma}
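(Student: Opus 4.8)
The plan is to exhibit the normal free subgroup as the kernel of an explicit homomorphism onto $\Z$, reusing the change of variable from Lemma \ref{lem: D2n is Fntimes Z by Cn}. Writing $x=ab$, we have $D_{2n}\cong\langle a,x\mid [a,x^n]=1\rangle$. I would define $\phi\colon D_{2n}\to\Z$ on generators by $\phi(a)=0$ and $\phi(x)=1$ (equivalently $\phi(a)=0$, $\phi(b)=1$ on the original generators); this is well defined since the relator $ax^na^{-1}x^{-n}$ has $\phi$-image $0$, and $\phi$ is clearly surjective, so $D_{2n}/\ker\phi\cong\Z$. It then remains only to show that $M:=\ker\phi$ is free of rank $n$.

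First I would identify $M$ as the normal closure of $a$: quotienting $D_{2n}$ by $\langle\langle a\rangle\rangle$ kills the relator and leaves $\langle x\rangle\cong\Z$, so $\langle\langle a\rangle\rangle=\ker\phi=M$. Thus $M$ is generated by the conjugates $a_i:=x^iax^{-i}$, $i\in\Z$. Next I would run Reidemeister--Schreier with the Schreier transversal $\{x^i:i\in\Z\}$ for $M$ in $D_{2n}$: the generators of $x^i$-type rewrite trivially while those of $a$-type give precisely the $a_i$, and rewriting the single defining relator $ax^na^{-1}x^{-n}$ together with its $x$-conjugates yields exactly the relations $a_i=a_{i+n}$ for all $i\in\Z$. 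Hence $a_i$ depends only on $i\bmod n$, and $M=\langle a_0,\dots,a_{n-1}\rangle$ with no further relations, i.e. $M\cong\mathbb{F}_n$.

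As a consistency check and an alternative route, I would note that this displays $D_{2n}=M\rtimes\langle x\rangle$ with $\langle x\rangle\cong\Z$ acting on the free basis by the cyclic shift $a_i\mapsto a_{i+1\bmod n}$; in particular $x^n$ acts trivially, matching the centrality of $\langle x^n\rangle$ found in Lemma \ref{lem: D2n is Fntimes Z by Cn}. Moreover $M$ is precisely the free factor $\mathbb{F}_n$ appearing in the subgroup $N\cong\mathbb{F}_n\times\Z$ of that lemma: since $\phi$ vanishes on $\langle x^iax^{-i}:0\le i<n\rangle$ and restricts to an isomorphism $\langle x^n\rangle\to n\Z$, one has $\ker(\phi|_N)=\mathbb{F}_n=M$. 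This gives an essentially computation-free derivation directly from the previous lemma, with the key new point being that this free factor is normal in all of $D_{2n}$, which is automatic once it is realized as $\ker\phi$.

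The only real obstacle is verifying that $M$ has no relations beyond $a_i=a_{i+n}$, that is, that the rank is exactly $n$ and the group is genuinely free rather than a proper quotient. The Reidemeister--Schreier computation settles this, and the semidirect-product description $\mathbb{F}_n\rtimes\Z$ provides an independent check by exhibiting a concrete group satisfying the defining relation of $D_{2n}$ and admitting the surjection $\phi$ with the claimed kernel.
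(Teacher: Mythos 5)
Your proof is correct, and it is the same basic method as the paper's --- a Reidemeister--Schreier computation for the kernel of a surjection onto $\Z$ --- but applied to a different surjection, so it produces a genuinely different normal subgroup. The paper uses the exponent-sum map $a,b\mapsto 1$ and obtains the kernel presentation $\langle a_i,\ i\in\Z \mid a_ia_{i+1}\cdots a_{i+n-1}=a_{i+1}\cdots a_{i+n}\rangle$, which still requires a (routine) Tietze elimination to see that it is free of rank $n$; you instead use $a\mapsto 0$, $b\mapsto 1$ (i.e.\ $x=ab\mapsto 1$ in the presentation $\langle a,x\mid [a,x^n]=1\rangle$), where the rewritten relators collapse to $a_i=a_{i+n}$ and freeness of rank $n$ is immediate. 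Your version also has the pleasant feature that the kernel is visibly the $\mathbb{F}_n$ direct factor of the subgroup $N\cong\mathbb{F}_n\times\Z$ from Lemma \ref{lem: D2n is Fntimes Z by Cn}, so normality of that factor comes for free. Since the lemma only asserts the \emph{existence} of such a normal subgroup, either kernel proves it. One caveat worth flagging: in Section \ref{sec:coherence} the paper invokes this lemma for the specific subgroup $K_U=\ker(a,b\mapsto 1)$, which is the kernel the paper's proof analyses but not the one yours does; your argument would need the one-line adaptation (or the observation that the two kernels are carried to each other by the automorphism-free change of presentation) before it could be cited there.
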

\begin{proof}
Consider the homomorphism $D_{2n}\to \Z$,  $a,b\mapsto 1$. 
Redemeister-Schreier rewriting method gives the infinite presentation of the kernel $\langle a_i, i\in \Z \,|\, a_ia_{i+1}\cdots a_{i+n-1} = a_{i+1}\cdots a_{i+n}, i\in \Z\rangle$. 
It is easy to check that this group is isomorphic to $\mathbb{F}_n$.
\end{proof}

\begin{cor}\label{cor: poly-Z subgroups of D2n}
Every subgroup of $D_{2n}$ not containing a non-abelian free subgroup is isomorphic to a subgroup of $K$.
\end{cor}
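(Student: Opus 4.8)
The plan is to leverage the structural results already established for $D_{2n}$, namely Lemma \ref{lem: D2n is Fn by Z} (which gives a short exact sequence $1\to \mathbb{F}_n \to D_{2n}\to \Z \to 1$) together with Corollary \ref{cor:strongest}-adjacent reasoning, and the fact that $K$ sits inside $D_{2n}$ as a finite-index subgroup. More precisely, by Lemma \ref{lem: D2n is Fntimes Z by Cn} the group $D_{2n}$ contains a normal finite-index subgroup $N\cong \mathbb{F}_n\times \Z$ with $D_{2n}/N\cong C_n$. So first I would take any subgroup $H\leqslant D_{2n}$ that contains no non-abelian free subgroup, and intersect it with $N$ to get $H_0 := H\cap N$, a finite-index subgroup of $H$ that also contains no non-abelian free subgroup.

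Next I would analyze $H_0$ as a subgroup of $\mathbb{F}_n\times \Z$. Projecting to the free factor $\mathbb{F}_n$, the image of $H_0$ is a subgroup of a free group, hence free; but it cannot contain $\mathbb{F}_2$ by hypothesis, so this image is cyclic, i.e.\ trivial or $\Z$. The kernel of this projection is $H_0\cap (\{1\}\times\Z)$, which is central and cyclic. An extension of $\Z$ (or the trivial group) by a central $\Z$ is abelian of rank at most $2$, so $H_0$ embeds in $\Z^2$. Thus every torsion-free subgroup of $D_{2n}$ avoiding $\mathbb{F}_2$ is, up to finite index, free abelian of rank at most $2$.

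The remaining work is to upgrade ``$H$ is virtually $\Z^2$'' to ``$H$ embeds in $K$''. Here I would use that $K\cong \langle x,y\mid x^2=y^2\rangle$ is itself a subgroup of $D_{2n}$ (for instance, as noted in the introduction, $\langle ab, ba\rangle\cong K$ inside $D_4$, and analogous copies exist in general), together with the classification of groups that are virtually $\Z^2$ and torsion-free: since $D_{2n}$ is torsion-free (being poly-free, as remarked in the introduction), $H$ is a torsion-free group that is virtually $\Z^2$, hence it is either $\Z$, $\Z^2$, or $K$ by the standard classification of torsion-free groups with two-ended or $\Z^2$-by-finite structure. Each of these embeds in $K$ (as $\Z,\Z^2\leqslant K$), giving the conclusion.

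The main obstacle I anticipate is the final identification step: knowing that $H$ is virtually $\Z^2$ and torsion-free does not immediately pin down its isomorphism type without invoking the classification of such groups (the only torsion-free groups that are virtually $\Z^2$ are $\Z$, $\Z^2$, and $K$). I would need to either cite this classification or argue it directly inside $D_{2n}$ by examining how $H$ maps under $D_{2n}\to \Z$ of Lemma \ref{lem: D2n is Fn by Z} and controlling the possible ways the central $\Z$ and the $C_n$-action interact; the flat versus non-flat (Klein-bottle) dichotomy corresponds precisely to whether $H$ acts trivially or by an order-two inversion on the central direction. Making that dichotomy clean, and confirming no genuinely new commensurability class can arise, is where the care is required.
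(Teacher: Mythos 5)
Your proposal is correct, but it takes a genuinely different (and heavier) route than the paper. The paper works with the normal subgroup $N\cong \mathbb{F}_n$ of Lemma \ref{lem: D2n is Fn by Z}, for which the quotient $D_{2n}/N$ is already $\Z$: for $H$ avoiding non-abelian free subgroups, $H\cap N$ is a cyclic \emph{normal} subgroup of $H$ with $H/(H\cap N)$ embedding in $\Z$, so $H$ is directly exhibited as an extension of (a subgroup of) $\Z$ by $\Z$, and the only torsion-free possibilities are $\Z$, $\Z^2$ and $K$, all subgroups of $K$. No passage to finite index is needed. You instead use the finite-index subgroup $N\cong \mathbb{F}_n\times\Z$ of Lemma \ref{lem: D2n is Fntimes Z by Cn}, correctly show $H\cap N$ embeds in $\Z^2$ (cyclic image in the free factor, central cyclic kernel, split central extension), and then must upgrade ``torsion-free and virtually $\Z^2$ (or virtually $\Z$, or virtually trivial)'' to an isomorphism type. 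That last step is where your approach pays a price: it requires the classification of torsion-free groups commensurable with $\Z^2$ (the two-dimensional Bieberbach/flat-manifold classification, plus Stallings--Wall for the two-ended case), which is a legitimate but much bigger hammer than the elementary observation that a $\Z$-by-$\Z$ extension is $\Z^2$ or $K$. Your aside that $K$ sits inside $D_{2n}$ as a finite-index subgroup is unnecessary for the statement (which only asks for an abstract embedding into $K$) and should be dropped. If you keep your route, you must actually cite or prove the classification of torsion-free virtually-$\Z^2$ groups; otherwise, switching to the paper's exact sequence $1\to\mathbb{F}_n\to D_{2n}\to\Z\to 1$ makes the whole argument self-contained.
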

\begin{proof}
Let $N\cong \mathbb{F}_n$, be the normal subgroup of $D_{2n}$ of the previous lemma.
Let $H\leqslant D_{2n}$ not containing a non-abelian free subgroup.
If $H\cap N$ is trivial, as $D_{2n}/N\cong \Z$, $H$ is a subgroup of $\Z$ (which is a subgroup of $K$).
If $H\cap N$ is non-trivial, then it must be cyclic. 
Hence $H$ is a subgroup of an extension of $\Z$ by $\Z$. 
Since the only extensions of $\Z$ by $\Z$ are $K$ and $\Z^2$, and $\Z^2$ is a subgroup of $K$, we get our result.
\end{proof}

The Lemma \ref{lem: D2n is Fn by Z} can be generalized to all EAFC groups in the sense that they are all poly-free  \cite{blasco2018poly}.

\subsection{Normalizers of Parabolics}

We will need to use the normalizers of some subgroups of an EAFC group.
Since the normalizer of a subgroup is contained in the normalizer of its parabolic closure (Lemma \ref{lem: normalizer  subsgroup of normalizer of Parabolic Closure}), it will be enough for our applications to understand the normalizer of a parabolic subgroup of an EAFC group.
We collect here a theorem from \cite{godelle2003parabolic} for future use. 
The results in  \cite{godelle2003parabolic} are stated for Artin--Tits groups of FC-type (not necessarily even).
We will not give the definition of Artin--Tits groups of FC-type here, the only thing required to know is that EAFC groups indeed lie in this class.
Using retractions we manage to give more explicit descriptions in the case when the labels are even.

\begin{theorem}[Theorem 0.1, \cite{godelle2003parabolic}]
Let $\Gamma=((V,E),m)$ be an Artin--Tits system of FC-type.
Let $G = G_{\Gamma}$ and $S\subset V$. 
Then one has the equality:
$$N_G(G_S) = G_S \cdot QZ_{G}(G_S).$$
\end{theorem}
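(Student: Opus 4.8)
The inclusion $G_S \cdot QZ_G(G_S) \subseteq N_G(G_S)$ is immediate: $G_S$ normalizes itself, and by definition $QZ_G(G_S)$ consists of the elements $g$ with $g^{-1}Sg = S$, i.e.\ those whose conjugation merely permutes the standard generators of $G_S$, so these too normalize $G_S$. All the content lies in the reverse inclusion, that every $g \in N_G(G_S)$ factors as $g = hz$ with $h \in G_S$ and $z \in QZ_G(G_S)$. The approach I would take is Godelle's, built on the \emph{ribbon calculus} for Artin--Tits groups: one organizes the conjugators carrying one standard parabolic to another into a groupoid of ribbons, and shows that each such conjugator is, up to left multiplication by an element of the source parabolic, a ribbon; the ribbons from $G_S$ to itself then generate exactly $QZ_G(G_S)$.

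First I would settle the spherical (finite-type) case, where a Garside structure is available. Given $g$ with $g^{-1} G_S g = G_T$ for standard $S,T$, I would use the normal form to replace $g$, up to an element of $G_S$, by a positive conjugator that is minimal in its coset; the key point is that such minimal positive conjugators send $S$ bijectively onto $T$, i.e.\ they are the elementary ribbons. Taking $T = S$ yields $N_G(G_S) = G_S \cdot QZ_G(G_S)$ in the spherical case. The technical heart here is proving this ribbon decomposition and that ribbons carry standard generators to standard generators, which is exactly where the combinatorics of the Garside normal form is needed.

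Next I would pass from spherical to general FC-type by exploiting the structure of $G$ as the amalgam (fundamental group of a development) of its spherical standard parabolics $G_T$, indexed by the spherical subsets $T$; the FC hypothesis ensures precisely that these spherical subsets are the cliques of $\Gamma$ and that the local groups glue coherently. An element $g \in N_G(G_S)$ carries a normal form with respect to this amalgam, and I would induct on its syllable length: either $g$ already lies in a spherical parabolic containing $G_S$ and the previous step applies, or the action on the associated complex lets me peel off a factor of $G_S$ and shorten $g$, while checking that the residual conjugation still permutes $S$, so the surviving factor lands in $QZ_G(G_S)$.

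The main obstacle is the interface between these two layers: aligning the ribbon groupoid of each spherical piece with the amalgam normal form so that a global conjugator factors as an element of $G_S$ times a globally standard conjugator. Concretely, the delicate point is to show that minimality of a conjugator in its $G_S$-coset is preserved when moving between adjacent spherical parabolics in the development; this is exactly the bookkeeping that Godelle's ribbon machinery is designed to handle, with the remaining arguments being normal-form manipulation. I note that the even case admits a shortcut via the retractions $\rho_S$, but that route only recovers the ordinary centralizer and so does not by itself establish the quasi-centralizer statement asserted here.
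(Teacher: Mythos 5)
The paper does not actually prove this statement: it is imported verbatim as Theorem 0.1 of \cite{godelle2003parabolic}, so the only meaningful comparison is with the proof in the cited source. Your outline reproduces Godelle's architecture faithfully. The easy inclusion $G_S\cdot QZ_G(G_S)\subseteq N_G(G_S)$ is handled correctly; for the converse you correctly identify the two layers of the argument: first the spherical case, where Garside normal forms let one replace a conjugator $g$ with $g^{-1}G_Sg=G_T$, modulo left multiplication by $G_S$, by a positive conjugator minimal in its coset, and where the crux is that such minimal conjugators send $S$ bijectively onto $T$ (the ribbons); second, the passage to general FC type via the amalgam of spherical standard parabolics, with induction on syllable length, the FC hypothesis entering exactly as you say --- every clique of $\Gamma$ spans a spherical subset, so the local pieces glue. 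Your final caveat is also apt and consistent with how the present paper uses the result: the retraction trick available in the even case (which underlies Lemma \ref{lem: commuting rep}) upgrades quasi-centralizing elements to genuinely centralizing ones \emph{after} the factorization $N_G(G_S)=G_S\cdot QZ_G(G_S)$ is known, but cannot by itself produce that factorization.

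That said, what you have written is a roadmap rather than a proof: the two load-bearing claims --- (i) the ribbon decomposition in the spherical case, including that minimal positive coset representatives carry standard generators to standard generators, and (ii) in the amalgam induction, that stripping a syllable from $g\in N_G(G_S)$ preserves the property that the residual conjugation permutes $S$, so that minimality of conjugators is compatible with moving between adjacent spherical parabolics in the development --- are precisely the technical content of Godelle's ribbon calculus, and in your write-up they are named and deferred rather than established. Since the statement is itself a quoted theorem in this paper, deferring to the cited machinery is a reasonable account of where the proof lives, but you should be aware that essentially all of the mathematical work resides in those two deferred steps, not in the framing you have supplied around them.
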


The normalizer $N_G(G_S)$, and the quasi-centralizer $QZ_G(G_S)$ respectively, appearing in the theorem above are defined as:
\begin{align*}
    N_G(G_S)  & \coloneqq \{g\in G|gG_Sg^{-1} = G_S\},\\
    QZ_G(G_S) & \coloneqq \{g\in G|gS = Sg\}.
\end{align*}

\begin{lemma}\label{lem: commuting rep}
With the above notation, for an EAFC group $G$, one has 
$$QZ_G(G_S) = \bigcap_{s\in S} QZ_G(G_{\{s\}}).$$
In particular, for every $g\in N_G(G_S)\setminus G_S$ there is $g'\in N_G(G_S)$ with $g'G_S = g G_S$ such that $g'$ commutes with every element of $G_S$.
\end{lemma}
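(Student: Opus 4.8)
The inclusion $\supseteq$ is immediate: if $g$ lies in $QZ_G(G_{\{s\}})$ for every $s\in S$, then $gs=sg$ for each generator $s$, and in particular the sets $gS$ and $Sg$ coincide, so $g\in QZ_G(G_S)$. Observe also that $\bigcap_{s\in S}QZ_G(G_{\{s\}})$ is precisely the centralizer of $G_S$, since commuting with every generator $s\in S$ is the same as commuting with all of $G_S$. Thus the real content of the lemma is the reverse inclusion, namely that an element quasi-centralizing $G_S$ in fact centralizes it; once this is shown, the ``in particular'' clause follows quickly from Godelle's theorem.

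The plan is to exploit the abelianization of $G_\Gamma$, and this is the only place where evenness is used. I would first check that the map $\phi\colon G_\Gamma\to \Z^{V}$ sending each generator $v$ to the corresponding basis vector is well defined and identifies the abelianization $G_\Gamma^{\mathrm{ab}}$ with the free abelian group on $V$. Indeed, for an edge $\{u,v\}$ with even label $m=2k$, both $\mathrm{prod}(u,v,m)=(uv)^k$ and $\mathrm{prod}(v,u,m)=(vu)^k$ abelianize to $k(e_u+e_v)$, so every defining relation becomes trivial in $\Z^{V}$. In particular the images of distinct generators are distinct basis vectors, hence linearly independent.

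With this in hand, take $g\in QZ_G(G_S)$. The condition $gS=Sg$ means that for each $s\in S$ there is some $\sigma(s)\in S$ with $gsg^{-1}=\sigma(s)$, defining a map $\sigma\colon S\to S$. Applying $\phi$ and using that conjugation is trivial in the abelian quotient gives $e_s=\phi(s)=\phi(gsg^{-1})=\phi(\sigma(s))=e_{\sigma(s)}$. Since the $e_v$ are linearly independent, this forces $\sigma(s)=s$, i.e.\ $gsg^{-1}=s$ for every $s\in S$. Hence $g$ centralizes $S$, so $g\in\bigcap_{s\in S}QZ_G(G_{\{s\}})$, establishing the equality.

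Finally, for the last assertion I would invoke Godelle's theorem $N_G(G_S)=G_S\cdot QZ_G(G_S)$. Given $g\in N_G(G_S)$ not in $G_S$, write $g=hz$ with $h\in G_S$ and $z\in QZ_G(G_S)$. By the first part $z$ commutes with every element of $G_S$, so $g\,G_S=hz\,G_S=h\,G_S z=G_S z=z\,G_S$, and thus $g'\coloneqq z$ satisfies $g'G_S=gG_S$ while commuting with all of $G_S$. The argument is short and presents no serious obstacle; the one point requiring care is the well-definedness and freeness of $\phi$, which is exactly where the even hypothesis enters and where the statement would fail for general Artin--Tits systems.
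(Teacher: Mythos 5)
Your proof is correct and is essentially the same as the paper's: the paper applies the retraction $\rho_{\{s\}}\colon G_\Gamma\to\langle s\rangle$ to the relation $gsg^{-1}=s'$ to conclude $s'=s$, and your abelianization map $\phi$ followed by projection onto the $e_s$-coordinate is exactly that retraction, with evenness entering at the same point. The handling of the ``in particular'' clause via Godelle's theorem also matches the intended argument.
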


\begin{proof}
Pick $g \in QZ_G(G_S)$ and $s\in S$. By definition $gS = Sg$, or equivalently $gSg^{-1} = S$. Let $s'\in S$ be an element such that $gsg^{-1} = s'$, and apply $\rho_{\{s\}}$. We get $s = \rho_{\{s\}}(s')$, which implies that $s=s'$, and hence $gsg^{-1} = s$. Ultimately $g \in QZ_G(G_{\{s\}})$, and since $s\in S$ was arbitrary we get $QZ_G(G_S) \subset \bigcap_{s\in S} QZ_G(G_{\{s\}})$. The other inclusion is obvious.
\end{proof}

\subsection{Groups acting on trees}

In order to prove our main theorem, we will make use of some Bass-Serre theory.
We collect here some notation and facts for completeness.
We follow \cite{DicksDunwoody}, details and proofs can be found there.

While for Artin--Tits system we used simplicial graphs, when dealing with group actions we will work with oriented graphs.

An {\it oriented graph} $Y=(V,E,\iota,\tau)$ consists of a non-empty set $V$, whose elements are called vertices; a set $E$, whose elements are called edges; and functions $\iota,\tau\colon E\to V$.
We say that the edge $e\in E$ starts at $\iota(e)$ and ends at $\tau(e)$.
There is a natural 1-dimensional CW-complex associated to $Y$, called the topological realization of $Y$, whose 0-skeleton is $V$, its $1$-skeleton is $E\times [0,1]$, and for any $e\in E$, $e\times [0,1]$ is an interval where $(e,0)$ is identified with $\iota e$ and $(e,1)$ is identified with $\tau e$.
The graph is a tree if its topological realization is connected and contractible.

An action of a group $G$ on a graph $Y=(V,E,\iota,\tau)$ is a tuple of actions of $G$ on the sets $V$ and $E$ such that $g\iota(e)=\iota(ge)$ and $g\tau(e)=\tau(ge)$ for all $e\in E$ and all $g\in G$.

A {\it graph of groups} is a pair $(G(-),Y)$ where $Y=(V,E,\overline{\iota},\overline{\tau})$ is an oriented graph and $G(-)$ is a function that to every vertex $v\in V$ assigns a group $G(v)$ and to every edge $e\in E$ assigns a distinguished subgroup $G(e)$ of $G(\overline{\iota}(e))$ and an injective homomorphism $t_e\colon G(e)\to G(\overline{\tau}(e))$, $g\mapsto g^{t_e}$.

Let $Y_0$ be a maximal subtree of $Y$. The {\it fundamental group of a graph of group $(G(-),Y)$ with respect to $Y_0$}, denoted by $\pi_1((G(-),Y,Y_0))$, is the quotient of the free product
$$(*_{v\in V} G(v))* (*_{e\in E} \langle t_e \mid \,\rangle)$$
by the normal closure of the relations
$$t_e^{-1}gt_e=g_e^{t_e},\, e\in E, g\in G(e),\; \text { and } $$
$$t_e =1, \text{ if } e\in EY_0.$$

Main examples of graphs of groups $(G(-),Y)$ are based on graphs with a single edge. 
If $Y=(\{u,v\},\{e\},\iota,\tau)$, then the fundamental group is the free product with amalgamation $G(u)*_{G(e)}G(v)$.
If $Y=(\{v\},\{e\},\iota,\tau)$, then  the fundamental group is the HNN extension $G(v)*_{G(e)}t_e$.

\begin{remark}\label{rem: map onto the fundamental group}
Let $G=\pi_1 ((G(-),Y,Y_0))$ and $N= \langle \cup_{v\in V} G(v)^G\rangle$ the normal closure of the vertex groups in $G$.
Then $G/N\cong \langle \{t_e: e\in EY-EY_0\}\mid \; \rangle$. 
That is, $G/N$ is isomorphic to $\pi_1(Y)$, the fundamental group of the underlying graph of the graph of groups $Y.$
\end{remark}

Given a graph of groups $(G(-),Y)$ with fundamental group $G$, one can construct a   $G$-graph $T$ associated to the graph of groups as follows.
The vertex set $VT=\sqcup_{v\in VY}  G/G(v)$. 
The edge set is $ET=\sqcup_{e\in EY}  G/G(e)$
The adjacency functions are given by $\iota_T(gG(e))= gG(\iota_Ye)$ and $\tau_T (gG(e))=t_egG(\tau_Y e)$.
This graph is a $G$-tree (\cite[Theorem I.7.6]{DicksDunwoody}) and it is called the {\it Bass-Serre tree} of the graph of groups.


Let $G$ be a group action on a tree $T=(VT,ET,\iota_T,\tau_T)$. 
A {\it fundamental $G$-transversal $Y$} consists of subsets $VY\subseteq VT$ and $VE\subseteq ET$ such that $VY$ and $VE$ are $G$-transversals for the action (i.e. they contain exactly one element for each orbit of vertices and edges), $\iota e \in VY$ for all $e\in EY$, and $Y$ is connected (as a subset of the topological realization).

The {\it graph of groups of $G$ acting on $T$ with respect to the fundamental $G$-transversal $Y$} is the graph $G\backslash T$ which is naturally in bijection with $Y$.
Denote by $\ol{\iota}$ and $\ol{\tau}$ the adjacency functions on $G\backslash T$. 
Note that by construction (and using the bijection $Y\leftrightarrow G\backslash T$) one has that $\iota_T(e)=\ol{\iota}(e)$ for all $e\in Y$.
We now describe the function $G(-)$.
For $v\in VY$, we assign $G(v)$ to be $G_v$, the stabilizers of $v\in VT$ by the action of $G$.
For $e\in EY$, we assign $G(e)$ to be $G_e$, the stabilizers of $e\in ET$ by the action of $G$.
Note that since $\iota_T(e)=\ol{\iota}(e)$ for all $e\in Y$, $G(e)$ is naturally a subgroup of $G(\ol{\iota}(e))$.
Finally, let $v=\tau_T(e)$. 
There is some $t_e\in G$ such that $u=t_e^{-1}v\in VY$; we can choose $t_e=1$ if $v\in VY$.
Then $G_e$ is a subgroup of $G_{v}=G_{t_e u}=t_eG_u t_e^{-1}$ and we define the map $G(e)\to G(\ol{\tau} e)=G(u)$ given by $g\mapsto t_e^{-1}gt_e$. 

Bass-Serre theory says that if $G$ acts on a tree $T$ and we construct a graph of groups of the action (with respect to some fundamental $G$-transversal), then the fundamental group of the graph of groups is isomorphic to $G$ and the $G$-tree associated to the graph of groups is $G$-isomorphic to $T$.

\section{Parabolic closures and roots}
\label{sec: roots}

In this section we prove Theorem \ref{thm: main roots}.
The proof is done by induction on the number of edges with labels greater than 2.
A key step in the induction is to pass to certain kernels of retractions.
For that we need to recall the description of such kernels obtained in   \cite{antolin2022intersection}.

\begin{notation}\label{description kernel}
Let $\Gamma=((V,E),m)$ be an EAFC system.
Let $x\in V$, and $\rho\coloneqq \rho_{\{x\}}\colon G_{\Gamma} \to \langle x \rangle$ the associated retraction. 
If $\,\Star(x) = V$, then $K = \ker (\rho_{x})$ is isomorphic to $G_\Omega$, 
where $(\Omega= (V\Omega, E\Omega), m^{\Omega})$ is an EAFC system (Section 4.1 in \cite{antolin2022intersection}) that will be described below.

Moreover, $V \Omega$  comes with an indexing: $i\colon V \Omega \to \Z$.
We will say that $P\leqslant G_\Omega$ is {\it index parabolic} (with respect to $i$) 
if there is $n\in \Z$, $S\subseteq i^{-1}(n)$, and $g\in G_\Omega$, such that $P= gG_Sg^{-1}$.

Let $x$ be a vertex of $\Gamma$ with  $L=\Link(x) = V \setminus \{x\}$.
For $u\in L$, set $k_u= m(\{u,x\})/2$.
Let $\Omega$ be the graph with vertex set 
$$V\Omega = \bigcup_{u\in L}\{u\}\times \{0,1,\dots, k_u-1\}.$$
We define the indexing $i\colon V\Omega \to \Z$ as $i(v,n)=n$. Denote the vertex $(v,n)$ by~$v_n$, and call $v_n\in V\Omega$ {\it of type}~$v$ and of {\it index} $n$.

The edge set of $\Omega$ is 
$$E\Omega = \{\{u_n,v_m\} \mid u_n,v_m \in V\Omega, \{u,v\}\in E\Gamma\}.$$
That is, there is an edge between $u_n$ and $v_m$ in $\Omega$ if and only if there is and edge between $u$ and $v$ in $\Gamma$. 
Finally, the label $m^{\Omega}_{u_n,v_m}$ of $\{u_n, v_m\}$ is the same as the label $m_{u,v}$ of $\{u,v\}$.

\end{notation}

\begin{lemma}[Lemma 4.1 in \cite{antolin2022intersection}]\label{lem: description kernel} With the previous notation, $(\Omega= (V\Omega, E\Omega), m^{\Omega})$ is an even EAFC system and
$G_\Omega \cong \ker (\rho)$ via $v_n\mapsto x^nvx^{-n}$.
\end{lemma}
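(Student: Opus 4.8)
The statement has two parts, and I would handle the combinatorial one first. To see that $(\Omega,E\Omega,m^\Omega)$ is an even system satisfying the FC condition, note that every label of $\Omega$ equals a label of $\Gamma$, so evenness is immediate. For the FC condition, the key observation is that two vertices of $\Omega$ of the \emph{same} type are never adjacent, since an edge of $\Omega$ requires its endpoints to have distinct, $\Gamma$-adjacent types. Hence any triangle $u_a,v_b,w_c$ of $\Omega$ has three distinct types $u,v,w$ spanning a triangle of $\Gamma$; as labels are inherited, the two label-$2$ edges guaranteed by the FC condition on $\Gamma$ transfer directly to $\Omega$.

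For the isomorphism, the plan is to compute a presentation of $\ker(\rho)$ by Reidemeister--Schreier and then reduce it by Tietze transformations to the Artin presentation of $G_\Omega$. Since $\rho$ maps $G_\Gamma$ onto $\langle x\rangle\cong\mathbb{Z}$, I would take the Schreier transversal $\{x^n:n\in\mathbb{Z}\}$. The letter $x$ contributes only trivial Schreier generators, and the remaining generators are the $x$-conjugates $u_n:=x^n u x^{-n}$ for $u\in L$ and $n\in\mathbb{Z}$ — exactly the intended images of the $v_n$. I would then rewrite the two families of Artin relators conjugated by each $x^n$. For an edge $\{x,u\}$ of label $2k_u$, using $xu=u_1x$ one checks $(xu)^{k_u}=u_1u_2\cdots u_{k_u}x^{k_u}$ and $(ux)^{k_u}=u_0u_1\cdots u_{k_u-1}x^{k_u}$, so the relator becomes
\[
u_{n+1}u_{n+2}\cdots u_{n+k_u}=u_nu_{n+1}\cdots u_{n+k_u-1},\qquad n\in\mathbb{Z}.
\]
These are precisely the relations presenting a free group on the window $u_0,\dots,u_{k_u-1}$ (cf.\ Lemma \ref{lem: D2n is Fn by Z}): each expresses $u_{n+k_u}$ as a conjugate of $u_n$, so every $u_n$ with $n\notin\{0,\dots,k_u-1\}$ is a word in the window generators. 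For an edge $\{u,v\}$ with $u,v\in L$ of label $2\ell$, conjugation by $x^n$ turns $(uv)^\ell=(vu)^\ell$ into the \emph{diagonal} relation $(u_nv_n)^\ell=(v_nu_n)^\ell$, one for each $n\in\mathbb{Z}$.

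I would then use Tietze transformations to eliminate, for every $u\in L$, the generators $u_n$ with $n$ outside the window $\{0,\dots,k_u-1\}$, leaving exactly $V\Omega$ as generating set. The remaining task is to check that the surviving relations $(u_nv_n)^\ell=(v_nu_n)^\ell$ become the full set of Artin relations of $\Omega$ after substitution. This is the main obstacle: $\Omega$ carries a relation for every pair $\{u_a,v_b\}$ with $a,b$ \emph{independent} window indices, whereas Reidemeister--Schreier produced only the diagonal ($a=b$) relations. The resolution is exactly where the EAFC hypothesis enters. Because $\Star(x)=V$, every $L$-edge $\{u,v\}$ spans a triangle with $x$, so the FC condition forces: if $\ell>1$ then both $\{x,u\}$ and $\{x,v\}$ have label $2$ (hence $k_u=k_v=1$ and $u_n\equiv u_0$, $v_n\equiv v_0$), and if $\ell=1$ then at least one of $\{x,u\},\{x,v\}$ has label $2$. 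In the first case all diagonal relations collapse to the single relation $(u_0v_0)^\ell=(v_0u_0)^\ell$ of $\Omega$. In the second, say $k_u=1$ so $u_n\equiv u_0$; the diagonal relations $u_0v_n=v_nu_0$ give, for window $n$, precisely the commutations $\Omega$ imposes on the edges $\{u_0,v_n\}$, while for non-window $n$ they follow after substituting the expression for $v_n$. Thus the reduced presentation is exactly the Artin presentation of $G_\Omega$, and the induced isomorphism sends $v_n$ to $x^n v x^{-n}$, as claimed.
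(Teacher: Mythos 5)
Your proof is correct, and it fills in an argument that the paper itself does not reproduce: Lemma \ref{lem: description kernel} is simply quoted from \cite{antolin2022intersection} (Lemma 4.1 there), so the natural comparison is with that source, whose argument is the same Reidemeister--Schreier computation over the transversal $\{x^n\}$ followed by Tietze eliminations. Your treatment of the one genuinely delicate point --- that rewriting only produces the diagonal relations $(u_nv_n)^\ell=(v_nu_n)^\ell$, and that the FC condition on the triangles through $x$ (forcing $k_u=k_v=1$ when $\ell>1$, and at least one of $k_u,k_v$ to be $1$ when $\ell=1$) is exactly what upgrades these to the full Artin presentation of $\Omega$ --- is the key step, and you handle it correctly.
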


\begin{remark}
Let $\Gamma$ be an EAFC graph, $v\in V\Gamma$ with $\Star(v) = V$ and suppose that $G_{\Gamma}$ is not a direct product of $G_{v}$ and $G_{V \setminus \{v\}}$. Denote by $E_{>2}\Gamma$ the set of edges of $\Gamma$ with a label greater than $2$, and $G_\Omega$ the Artin--Tits group referred in Lemma \ref{lem: description kernel}. Then $|E_{>2}\Omega| < |E_{>2}\Gamma|$.
\end{remark}

To prove that parabolic subgroups are closed under taking roots, we need to relate parabolic closures on $G_\Gamma$ and $G_\Omega$. 
This is not as well-behaved as when we compared parabolic closures in $G_\Gamma$ and its standard parabolic subgroups (Lemma \ref{lem: conjugation and parabolic closure}), but the control provided by the next lemma will be enough for our purposes.

\begin{lemma}\label{lem: parabolic in Omega}
Let $v\in V$ with $\,\Star(v) = V$, and $K = \ker (\rho_{v}) \cong G_\Omega$. Let $k \in K$ such  $P^{\Gamma}(k)$ is a standard parabolic.
Then, the intersection~$P^\Gamma(k)\cap K$ is parabolic in $K = G_\Omega$. In particular, $P^{\Omega}(k) \leqslant P^\Gamma(k) \cap K$.
\end{lemma}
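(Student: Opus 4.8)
The plan is to use the hypothesis that $P^{\Gamma}(k)=G_S$ is a \emph{standard} parabolic, fix $S\subseteq V$, and show directly that $G_S\cap K$ corresponds, under the isomorphism $G_\Omega\cong K$ of Lemma \ref{lem: description kernel}, to a standard parabolic of $G_\Omega$. The only structural input needed is the explicit description of $\Omega$ and of the isomorphism $u_n\mapsto v^n u v^{-n}$ in Notation \ref{description kernel}, together with the observation that this kernel construction is compatible with passing to standard parabolics of $\Gamma$. I would split according to whether $v\in S$, writing $L=\Link(v)=V\setminus\{v\}$.

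If $v\notin S$, then $S\subseteq L$, so $\rho_v(G_S)=1$ and hence $G_S\subseteq K$, giving $G_S\cap K=G_S$. Under the isomorphism $G_\Omega\cong K$ each generator $u\in S$ equals $v^0 u v^0$, i.e. the image of $u_0\in V\Omega$; thus $G_S$ is the image of the standard parabolic $G_{\{u_0:\,u\in S\}}$ of $G_\Omega$, since the subgraph of $\Omega$ induced by $\{u_0:u\in S\}$ is isomorphic (with matching labels) to the subgraph of $\Gamma$ induced by $S$. Hence $G_S\cap K$ is standard parabolic in $G_\Omega$.

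If $v\in S$, write $S=\{v\}\cup S'$ with $S'\subseteq L$, and let $\Delta$ be the subgraph of $\Gamma$ induced by $S$. Since $\Star_\Gamma(v)=V$ we have $\Star_\Delta(v)=S$, so $v$ plays in $G_S=G_\Delta$ the same role it plays in $G_\Gamma$: the restriction $\rho_v|_{G_S}$ is the canonical retraction of $G_\Delta$ onto $\langle v\rangle$, and $G_S\cap K=\ker(\rho_v|_{G_S})$. Applying Lemma \ref{lem: description kernel} to $\Delta$ identifies this kernel with $G_{\Omega'}$, where $\Omega'$ has vertex set $\{u_n:\,u\in S',\,0\le n<k_u\}$. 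The key point is that $\Omega'$ is exactly the subgraph of $\Omega$ induced by these vertices (same edges and labels, both read off from $\Gamma$), and that the two identifications are given by the same formula $u_n\mapsto v^n u v^{-n}$; therefore $G_S\cap K$ is the standard parabolic $G_{V\Omega'}$ of $G_\Omega$.

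In either case $P^{\Gamma}(k)\cap K$ is parabolic in $K=G_\Omega$. Since $k\in K\cap P^{\Gamma}(k)$ and $P^{\Gamma}(k)\cap K$ is one of the parabolic subgroups of $G_\Omega$ containing $k$, the definition of parabolic closure gives $P^{\Omega}(k)\leqslant P^{\Gamma}(k)\cap K$, as claimed. I expect the main obstacle to be the bookkeeping in the case $v\in S$: verifying that the restricted retraction is genuinely the canonical one for $G_\Delta$, and that the kernel graph $\Omega'$ produced by Lemma \ref{lem: description kernel} for the induced subsystem $\Delta$ embeds in $\Omega$ as a true induced subgraph with compatible vertex-indexing, so that the two isomorphisms onto $K$ agree on $G_S\cap K$.
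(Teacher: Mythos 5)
Your proposal is correct and follows essentially the same route as the paper: split on whether $v$ lies in $S$, and in each case identify $G_S\cap K$ explicitly as a standard parabolic of $G_\Omega$ using the description of the kernel from Notation \ref{description kernel}. The only cosmetic difference is that in the case $v\in S$ the paper verifies directly that $\langle\{u_i\in V\Omega \mid u\in S\setminus\{v\}\}\rangle=G_S\cap K$ by rewriting an element of $G_S\cap\ker\rho_v$ in the conjugates $v^iuv^{-i}$, whereas you obtain the same identification by applying Lemma \ref{lem: description kernel} to the induced subsystem on $S$ and checking compatibility of the two kernel graphs --- a slightly more systematic packaging of the same computation.
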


\begin{proof}
Suppose that  $P^{\Gamma}(k)$ is a standard parabolic, say $P^\Gamma(k) = G_Q$ for some $Q \subset V\Gamma$.  
We are going to show that $G_Q\cap K$ is a standard parabolic in $G_{\Omega}$.

Consider first the case $v \not \in Q$. In this case $G_Q$ is a standard parabolic subgroup of $K$ as well (see Notation \ref{description kernel}) containing $k$, so we have $P^\Gamma(k)\cap K = G_Q$ and moreover, $P^{\Omega}(k) \leqslant G_Q = P^\Gamma(k)$. 

Consider  now the case  $v \in Q$. 
Recall that $K = G_{\Omega}$ with vertices of $\Omega$ of the form $u_i$ as in Notation \ref{description kernel}, where $u \in V \setminus \{v\}$ and $i \in \Z$, with $u_i = v^iu_0v^{-i}$ and $u_0 = u$. 

We claim that $\langle \{u_i \in V\Omega \mid u \in Q \setminus \{v\}, i \in \Z \}\rangle = G_Q \cap K$.
Clearly, $\langle \{u_i  \in V\Omega\mid u \in Q \setminus \{v\}, i \in \Z \} \rangle \leqslant G_Q \cap K$. 
For the other inclusion, pick an element $w\in G_Q \cap K$; since $\rho_v(w)$ is trivial we write $w$ in terms of $u_i = v^iu_0v^{-i}$ for $u \in Q \setminus \{v\}$. 
Finally 
$P^\Gamma(k) \cap K = \langle \{u_i \in V\Omega \mid u \in Q \setminus \{v\}, i \in \Z \}\rangle$ which is a standard parabolic in $K$, and moreover $P^{\Omega}(k) \leqslant P^\Gamma(k) \cap K$.
\end{proof}

\begin{theorem}
For every even FC-type labelled graph, every $g\in G_\Gamma$ and every $n\in \Z \setminus \{0\}$, the equality $P(g) = P(g^n)$ holds.
\end{theorem}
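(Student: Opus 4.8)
The plan is to prove the two inclusions separately. The inclusion $P(g^n)\subseteq P(g)$ is immediate: $g^n\in\langle g\rangle\subseteq P(g)$ and $P(g)$ is parabolic, so the smallest parabolic $P(g^n)$ containing $g^n$ lies inside $P(g)$. Everything is therefore in proving $g\in P(g^n)$, which forces $P(g)\subseteq P(g^n)$ and hence equality. To get a hold on $g$, I would use that $g$ commutes with $g^n$, so it normalises $\langle g^n\rangle$; since $P(g^n)=P(\langle g^n\rangle)$, Lemma \ref{lem: normalizer  subsgroup of normalizer of Parabolic Closure} gives $g\in N_G(P(g^n))$. Conjugating $g$, which by Lemma \ref{lem: conjugation and parabolic closure}(1) does not affect the statement, I may assume $P(g^n)=G_T$ is standard. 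The normalizer theorem quoted above gives $N_G(G_T)=G_T\cdot QZ_G(G_T)$, and Lemma \ref{lem: commuting rep} lets me write $g=qz$ with $q\in G_T$ and $z\in QZ_G(G_T)$ centralising $G_T$. Then $g^n=q^nz^n$, and $g^n\in G_T$, $q^n\in G_T$ give $z^n\in G_T$. The theorem thus reduces to the claim: \emph{if $z$ centralises $G_T$ and $z^n\in G_T$, then $z\in G_T$} (for then $g=qz\in G_T=P(g^n)$).

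I would prove this claim by induction on the pair $(|E_{>2}\Gamma|,|V\Gamma|)$, ordered lexicographically, the base case $|E_{>2}\Gamma|=0$ being a RAAG, where the statement is standard. The first step confines the support of $z$: an element centralising a standard generator $t$ has support inside $\Star(t)$ (the standard description of centralizers of generators, visible from the splitting of $G$ over $G_{\Link(t)}$), so centralising every $t\in T$ forces $z\in G_W$ with $W=\bigcap_{t\in T}\Star(t)$. If $W\subsetneq V$, then $z\in G_W$ and $z^n\in G_T\cap G_W=G_{T\cap W}$; the inductive hypothesis (the root property) inside the EAFC group $G_W$, which has strictly fewer vertices, gives that the parabolic closures of $z$ and $z^n$ in $G_W$ agree and lie in $G_{T\cap W}$, whence $z\in G_{T\cap W}\subseteq G_T$.

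There remains the case $W=V$, that is $\bigcap_{t\in T}\Star(t)=V$; this forces $T$ to be a clique with every vertex of $V$ adjacent to all of $T$, so $\Star(t)=V$ for every $t\in T$. If every edge joining $T$ to $V\setminus T$ has label $2$, then $G_\Gamma=G_T\times G_{V\setminus T}$ and $z\in Z(G_T)\times G_{V\setminus T}$; writing $z=z_1z_2$ accordingly, $z^n\in G_T$ forces $z_2^n=1$, so $z_2=1$ by torsion-freeness of EAFC groups and $z=z_1\in G_T$. Otherwise some edge $\{t_0,u\}$ with $t_0\in T$, $u\in V\setminus T$ has label $>2$. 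Here the EAFC triangle condition is decisive: every triangle $\{t_0,u,t'\}$ with $t'\in T$ exists (as $u$ and $t_0$ are adjacent to all of $T$) and contains the high-label edge $\{t_0,u\}$, so $\{t_0,t'\}$ must have label $2$; thus $t_0$ commutes with all of $T$, i.e. $t_0\in Z(G_T)$ and $G_T=\langle t_0\rangle\times G_{T\setminus\{t_0\}}$. Since also $\Star(t_0)=V$, I pass to $K=\ker(\rho_{t_0})\cong G_\Omega$ (Lemma \ref{lem: description kernel}), which has strictly fewer high-label edges. Setting $t_0^a=\rho_{t_0}(z)$ and $z'=t_0^{-a}z$, the element $z'$ lies in $K$, still commutes with $t_0$, and $(z')^n=t_0^{-na}z^n\in G_T\cap K=G_{T\setminus\{t_0\}}$, a standard parabolic of $K$. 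Applying the root property in $G_\Omega$ (inductive hypothesis, fewer high-label edges) gives $z'\in G_{T\setminus\{t_0\}}$, hence $z=t_0^az'\in G_T$.

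The conceptual heart, and the place I expect the real work, is this last configuration $W=V$ with a high-label edge leaving $T$: it is exactly where the support of $z$ can fail to persist under powering, and it is what the induction on $|E_{>2}\Gamma|$ is designed to dismantle. The delicate points to verify carefully are that the EAFC condition indeed forces the chosen endpoint $t_0$ to be central in $G_T$ (so that passing to the kernel of $\rho_{t_0}$ keeps the hypotheses intact), and that the intersection $G_T\cap K$ is a standard parabolic of $G_\Omega$ across the isomorphism of Lemma \ref{lem: description kernel}, so that the transfer of parabolic closures governed by Lemma \ref{lem: parabolic in Omega} legitimately drives the drop in $|E_{>2}\Gamma|$.
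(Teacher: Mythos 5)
Your proposal is correct, but it takes a genuinely different route from the paper's. The paper also inducts on $(|E_{>2}\Gamma|,|V\Gamma|)$ and also uses the kernel $\ker(\rho_v)\cong G_\Omega$ together with Lemma \ref{lem: parabolic in Omega} as the engine that strictly drops $|E_{>2}|$, but its top-level reduction is element-based: after conjugating, it assumes $P(g)=G_\Gamma$ and $P(g^n)=G_S$, and shows vertex by vertex that $S=V$ (first disposing of the case that $g$ lies in a proper parabolic, then treating non-adjacent pairs $u,v$ via the Bass--Serre tree of $G_{\Gamma-\{u\}}*_{G_{\Gamma-\{u,v\}}}G_{\Gamma-\{v\}}$, and finally vertices with full star via $\rho_v(g)$ and $G_\Omega$); Godelle's normalizer theorem is never invoked there. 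You instead observe that $g\in N_G(P(g^n))$, use Godelle's theorem plus Lemma \ref{lem: commuting rep} to write $g=qz$ with $z$ centralizing $G_T$, and reduce everything to the single claim that a centralizing element with a power in $G_T$ lies in $G_T$; the dichotomy on $W=\bigcap_{t\in T}\Star(t)$ and the EAFC triangle condition (which forces $t_0$ to be central in $G_T$ and forces $k_u=1$ on the other high-label edges, so that $|E_{>2}\Omega|<|E_{>2}\Gamma|$ as in the Remark after Lemma \ref{lem: description kernel}) then carries the induction. Your approach buys a cleaner top-level structure, at the cost of two facts you should justify explicitly: that $C_G(t)\leqslant G_{\Star(t)}$ (this follows because the retraction $\rho_{\{t\}}$ shows $t$ is not conjugate into $G_{\Link(t)}$, so $t$ fixes a unique vertex of the tree for $G_{\Star(t)}*_{G_{\Link(t)}}G_{V-\{t\}}$, and its centralizer must fix that vertex), and the computation $(z')^n=t_0^{-na}z^n\in G_T\cap K=G_{T-\{t_0\}}$, which is where $z$ commuting with $t_0$ is essential. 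Also, the appeal to the RAAG case as a ``standard'' base case is dispensable: your $W\subsetneq V$ and direct-product cases already drive that subinduction down to a single vertex, so the argument is self-contained.
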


\begin{proof}
Let $E_{>2}\Gamma$ denote the number of edges of $\Gamma$ with a label greater than $2$.
The proof proceeds by induction on $(|E_{>2}|,|V\Gamma|) $ (ordered lexicographically). If $(|E_{>2}|,|V\Gamma|) =(0,1)$, then $|V\Gamma|=1$, and the parabolic closure of any non-trivial element is the whole group $G_\Gamma$ and the lemma holds. Assume now that~$|V\Gamma| > 1$. 

Let $g\in G_\Gamma$ and $n\in \mathbb{Z}\setminus\{0\}.$

If $g$ lies in a proper parabolic subgroup, so does $g^n$. 
In this case there would be a proper subgraph $\Delta$ of~$\Gamma$ and~$t\in G_\Gamma$ such that $tgt^{-1}$ and $tg^nt^{-1} = (tgt^{-1})^n$ lie in $G_\Delta$. 
Notice that $(|E_{>2}\Delta|, |V\Delta|)<(|E_{>2}\Gamma|,|V\Gamma|)$.
By induction $P^\Delta(tgt^{-1})= P^{\Delta}(tg^nt^{-1})$.
As the parabolic closure is well-behaved with conjugation and intersections with restrictions to standard parabolics (Lemma \ref{lem: conjugation and parabolic closure}), $P^\Gamma(g)=P^\Gamma(g^n)$.

So we will assume that $g$ is not contained in any proper parabolic subgroup of $G_\Gamma$. 
In particular $P^{\Gamma}(g)= G_\Gamma$. 
We also let $P^{\Gamma}(g^n)= tG_St^{-1}$ for some subset $S$ of $V$ and $t\in G_\Gamma$. 
We need to show that $S = V$.
As parabolic closure commutes with conjugation (Lemma \ref{lem: conjugation and parabolic closure}), $P(t^{-1}g^nt)=G_S$ and $P(t^{-1}gt)=G_\Gamma$. 
So, replacing $g$ by $t^{-1}gt$ we can assume that $P^{\Gamma}(g^n)=G_S$ and $P^{\Gamma}(g)=G_\Gamma$.

First consider the case where the group $G_{\Gamma}$ can be expressed as a direct product of two proper standard parabolic subgroups as:
$$G_{\Gamma} = G_{\Gamma_1} \times G_{\Gamma_2},$$
then for  any $h = (h_1, h_2)$ we have $P(h) = \langle P^{\Gamma_1}(h_1),  P^{\Gamma_2}(h_2)\rangle \cong P^{\Gamma_1}(h_1)\times P^{\Gamma_2}(h_2)$. In particular, if $g=(g_1,g_2)$ then $P^{\Gamma_i}(g_i)=P^{\Gamma_i}(g_i^n)$ for $i=1,2$ by induction, and hence $P(g)=P(g^n)$.

So assume  that the graph $\Gamma$ is irreducible, i.e. $G_{\Gamma}$ cannot be expressed as a direct product of two proper standard parabolic subgroups. 
If there  are $u, v \in V\Gamma$ with $v \not\in \Star(u)$, then $G_{\Gamma}$ splits as the amalgamated free product: 
$$G_{\Gamma} = G_{\Gamma \setminus \{u\}}*_{G_{\Gamma \setminus \{u, v\}}} G_{\Gamma \setminus \{v\}}.$$
If  $g$ stabilizes a vertex of the Bass-Serre tree corresponding to the splitting above, then $g$ is contained in a parabolic subgroup $P$ over $\Gamma \setminus \{u\}$ or $\Gamma \setminus \{v\}$, contradicting our hypothesis.  So $g$ does not stabilize a vertex, and neither does $g^n$, implying that both $u,v\in S$.

So it remains to show that for each $v\in V$ with $\Star(v)=V$ we have $v\in S$.
We consider two cases, whether $\rho_v(g)$ is trivial or not.

If $\rho_{v}(g)$ is non-trivial, then $\rho_v(g^n)$ is non-trivial. 
As $\rho_{v}(G_S)$ is non-trivial, we have that $v\in S$ as we desired. 


If $\rho_v(g)$ is trivial, then both $g$ and $g^n$ belong to $K = \ker(\rho_v)$ and there is an isomorphism of $K$ with $G_\Omega$ where $\Omega$ is an EAFC system (see Notation \ref{description kernel}). 
We have that $|E_{>2}\Omega|<|E_{>2}\Gamma|$ and hence, by induction we know that for every $h\in K$, $P^\Omega(h)=P^{\Omega}(h^n)$.

Hence $P^\Omega(g)=P^\Omega(g^n)$, and as $P^{\Gamma}(g^n) = G_S$ is a standard parabolic in $G_{\Gamma}$, Lemma \ref{lem: parabolic in Omega}, gives that  $P^\Gamma(g^n)\cap K$ is a parabolic subgroup of $G_\Omega$ containing $g^n$. 
Therefore  $P^{\Omega}(g^n)\leqslant P^{\Gamma}(g^n)$.

Now $g\in P^\Omega(g) = P^\Omega(g^n)\leqslant P^\Gamma(g^n)$, and taking the parabolic closures of these sets we obtain
$$P^\Gamma(g) \leqslant P^\Gamma(P^\Omega(g))= P^\Gamma(P^\Omega(g^n))\leqslant P^\Gamma(P^\Gamma (g^n)) = P^\Gamma(g^n).$$

And therefore, $G_\Gamma= P^{\Gamma}(g)=P^\Gamma(g^n)$, as desired.
\end{proof}

\begin{cor}
Parabolic subgroups are closed by taking roots. That is, if $P$ is parabolic and $g^n\in P$, then $g\in P$. 
\end{cor}

\section{Subgroups not containing non-abelian free groups}
\label{sec: virtually abelian subgroups}
Throughout this section, $\Gamma$ is a finite EAFC system. 
The following proof follows closely the strategy of \cite[Theorem 4.1]{antolin2015tits}.

\begin{theorem}\label{thm: v abelian subgroups}
Every  subgroup of $G=G_\Gamma$, that does not contain a non-abelian free subgroup, is isomorphic to $\mathbb{Z}^s\times K^t$ for some non-negative integers $s,t$ with $s+2t \leq |V|$.

Let $G_0$ be an appropriate subgroup of $G$. Then every  subgroup of $G_0$ (Definition \ref{defn: G_0}), that does not contain a non-abelian free subgroup, is isomorphic to $\mathbb{Z}^s$ with $s \leq |V|$.
\end{theorem}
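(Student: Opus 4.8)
The plan is to prove both assertions by a single induction on $|V|$, treating the first statement — that $H$ is isomorphic to a subgroup of $\mathbb{Z}^s\times K^t$, as in part (a) of Theorem \ref{thm: main} — as the engine, and the second as the same induction carried out inside $G_0$. First I would reduce to the case that $H$ is finitely generated. Since $G_\Gamma$ is poly-(finitely generated free) \cite{blasco2018poly}, a short induction on the poly-free length shows that an $\mathbb{F}_2$-free subgroup maps to each free layer quotient with image that is both free and $\mathbb{F}_2$-free, hence cyclic; so the subgroup is finitely generated of Hirsch length at most $|V|$ (this also excludes pathologies such as $\mathbb{Z}[1/2]$). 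Next I would reduce to $P(H)=G_\Gamma$: if the parabolic closure is a proper parabolic then, after conjugating, $H\leq G_S\cong G_\Delta$ with $|V\Delta|<|V|$ (Lemma \ref{lem: conjugation and parabolic closure}) and induction applies; for the second statement one uses that $G_0\cap G_S$ is appropriate in $G_S$ (Remark \ref{rem: G_0 standard}).

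With $P(H)=G_\Gamma$ I would split into two cases. If $\Gamma$ is complete then $G_\Gamma\cong D_{2n_1}\times\cdots\times D_{2n_k}\times\mathbb{Z}^l$ with $2k+l\leq|V|$ (Lemma \ref{lem: EAFC over complete graphs}); composing with the coordinate projections and using that a group surjecting onto a non-abelian free group contains $\mathbb{F}_2$, each $\pi_i(H)\leq D_{2n_i}$ is $\mathbb{F}_2$-free, hence embeds in $K$ (Corollary \ref{cor: poly-Z subgroups of D2n}), so $H$ embeds in $K^k\times\mathbb{Z}^l$. For the second statement $H\leq G_0$ forces $\pi_i(H)=\rho_{e_i}(H)\leq N_{e_i}\cong\mathbb{F}_{n_i}\times\mathbb{Z}$, whose $\mathbb{F}_2$-free subgroups embed in $\mathbb{Z}^2$, giving $H\hookrightarrow\mathbb{Z}^{2k+l}$. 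If $\Gamma$ is not complete, pick $v$ with $\Star(v)\neq V$ and use the amalgam $G_\Gamma=G_{\Gamma\setminus\{v\}}*_{G_{\Link(v)}}G_{\Star(v)}$. Since $P(H)=G_\Gamma$, no vertex of the Bass--Serre tree $T$ is fixed by $H$; as $H$ is finitely generated and $\mathbb{F}_2$-free, Helly's property for fixed-point subtrees forces a hyperbolic element, and $\mathbb{F}_2$-freeness forces all hyperbolic elements to share one axis, so $H$ stabilises a line. This gives $\phi\colon H\to\mathbb{Z}$ or $D_\infty$ whose kernel $H_0$ fixes the line pointwise, hence lies in a conjugate of the edge group $G_{\Link(v)}$; as $|\Link(v)|\leq|V|-2$, induction describes $H_0$.

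The crux is to assemble $1\to H_0\to H\to Q\to 1$ (with $Q\leq\mathbb{Z}$ or $D_\infty$) into an embedding into $\mathbb{Z}^s\times K^t$ with $s+2t\leq|V|$, and this is the step I expect to be the main obstacle. The danger is a conjugation action of $Q$ on $H_0$ of infinite order, producing a non-virtually-abelian, $\mathbb{F}_2$-free subgroup (Heisenberg- or $Sol$-type) that the theorem must exclude. The key device is the normalizer machinery: a translating generator $h$ normalises $H_0$, hence $P(H_0)$ (Lemma \ref{lem: normalizer  subsgroup of normalizer of Parabolic Closure}); writing $P(H_0)=G_R$ and combining $N_G(G_R)=G_R\cdot QZ_G(G_R)$ \cite{godelle2003parabolic} with Lemma \ref{lem: commuting rep}, the action of $h$ on $H_0$ coincides with conjugation by some $h_1\in G_R$. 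Although $\langle H_0,h\rangle$ may have full parabolic closure, $\langle H_0,h_1\rangle$ lies in the strictly smaller $G_R$, so by induction it is virtually abelian; hence $h_1$ acts on $H_0$ with finite order, ruling out the Heisenberg and $Sol$ possibilities. One then checks that $H$ is virtually $H_0\times\mathbb{Z}$ and that the residual order-two inversions (and the reflection part of a genuine $D_\infty$) contribute exactly the Klein-bottle factors, raising the budget by at most $2$ while keeping $s+2t\leq|V|$.

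Finally, for the second statement I would re-run this induction inside $G_0$, invoking Remark \ref{rem: G_0 standard} so that the edge and vertex groups are replaced by their appropriate subgroups. The only places where part one produced a Klein-bottle factor are the two-generated (dihedral) parabolics: directly in the complete case, and in the assembly through the inversion/reflection of the line stabiliser, which is governed by a single dihedral parabolic $P$. By the defining property of an appropriate subgroup (Definition \ref{defn: G_0}), every subgroup of $P\cap G_0$ embeds in $\mathbb{Z}^2$, so no $K$ factor ever appears; thus $H$ embeds in $\mathbb{Z}^{\leq|V|}$, and being a torsion-free subgroup of a free abelian group it is isomorphic to some $\mathbb{Z}^s$ with $s\leq|V|$. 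I expect the delicate point of this last step to be verifying that a $K$-producing inversion is genuinely detected by the quotient $G_\Gamma\to\prod_{e\in E}C_{m(e)/2}$ defining $G_0$, so that it cannot occur inside $G_0$.
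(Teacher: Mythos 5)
Your overall architecture (induction on $|V|$, reduction to full parabolic closure, the amalgam over $G_{\Link(v)}$, an abelian kernel inside an edge stabilizer, and the normalizer/commuting-representative device of Lemma \ref{lem: commuting rep}) matches the paper's, but there are two concrete gaps at the tree-action step. First, the claim that ``$\mathbb{F}_2$-freeness forces all hyperbolic elements to share one axis, so $H$ stabilises a line'' is false: two hyperbolic isometries whose axes share only a ray need not generate a free group (this is exactly what happens in $BS(1,2)$ acting on its Bass--Serre tree), so the correct trichotomy for a fixed-point-free, $\mathbb{F}_2$-free action is preserve-a-line \emph{or} fix-an-end. The paper treats the end-fixing situation as a separate Case 3: there the set $N$ of elliptic elements is a normal subgroup that does \emph{not} fix a line pointwise; one shows $N$ lies in a conjugate of $G_{\Link(v)}$ via a common fixed ray converging to the end, and then recovers $H=\langle x\rangle N$ by a minimal-translation-length argument. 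Your proposal has no argument covering this case, and your description of the kernel as ``fixing the line pointwise'' does not apply to it.

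Second, you leave the $D_\infty$ quotient alive and propose that its reflections produce the Klein-bottle factors, flagging the $G_0$ version of this as the delicate point. The paper instead kills this case outright using closure of parabolic subgroups under taking roots (Theorem \ref{thm: main roots}): if $\Phi(H)\cong D_\infty$ there is $h\in H$ with $hG_C\neq G_C$ but $h^2\in G_C$ (for the relevant conjugate $G_C$ of $G_{\Link(v)}$), and root-closure forces $h\in G_C$, a contradiction. Hence the quotient by the elliptic/pointwise-fixing kernel is always infinite cyclic, $H\cong N\times\Z$ after adjusting the generator by Lemma \ref{lem: commuting rep}, and the $K$ factors arise \emph{only} in the two-generator base case $D_{2n}$ (Corollary \ref{cor: poly-Z subgroups of D2n}) --- which is also exactly where Definition \ref{defn: G_0} removes them, making the $G_0$ statement follow with no extra work. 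Without this, your plan to let an ``inversion'' contribute a $K$ factor and then to verify it is detected by $G_\Gamma\to\prod_e C_{m(e)/2}$ is not carried out and would be the hard part: crystallographic-type extensions of $\Z^s$ by $D_\infty$ need not embed in $\Z^s\times K^t$, so something must exclude them, and root-closure is that something. (Your preliminary reduction to finitely generated $H$ via poly-freeness is a reasonable substitute for the paper's handling of possibly infinitely generated subgroups, and your complete-graph case is fine.)
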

\begin{proof}
We argue by induction on $|V|.$ 

If $|V|=1$ then $G_\Gamma$ is cyclic, and every subgroup of $G_\Gamma$ is cyclic as well.

For $|V|=2$, the group $G_\Gamma$ is either a free group or a dihedral Artin group. 
In any case (Corollary \ref{cor: poly-Z subgroups of D2n}) subgroups not containing non-abelian free groups must be  subgroups of $K$.
If moreover, we restrict to subgroups of $G_0$ not containing non-abelian free groups, then we are left
 with subgroups of $\Z^2$ (Definition \ref{defn: G_0}).

Assume now that $|V|>2$. We have two cases: the graph is reducible or not.

If $\Gamma$ is reducible, then there are $A, \, B\subseteq V$ such that $G_\Gamma = G_A \times G_B$. 
In this case for any $H \leqslant G_\Gamma$ one has $H \leqslant \rho_A(H) \times \rho_B(H)$. 
If $H\leqslant G_\Gamma$ does not contain a non-abelian free subgroup, then so is the case for $\rho_A(H)$ and $\rho_B(H)$, which by induction are  of the form $\mathbb{Z}^{s_A}\times K^{t_A}$ and $\Z^{s_B}\times K^{t_B}$ respectively, with $s_A+2t_A\leq |A|$ and $s_B+2t_B\leq |B|$. 
Now $H$ is isomorphic to a subgroup of  $$\rho_A(H)\times \rho_B(H) \cong \Z^{s_A+t_B}\times K^{s_A+t_B}$$ which is of the desired form for $s = s_A + s_B$, and $t = t_A + t_B$.
Moreover, one has:
$$s + 2t = (s_A+s_B) + 2 (t_A + t_B) = (s_A + 2t_A) + (s_B + 2t_B) \leq |A| + |B| = |V|.$$
For the case of subgroups of $G_0$, note that $G_0\cong (G_0\cap G_A)\times (G_0\cap G_B)$. By Remark \ref{rem: G_0 standard}, $G_0\cap G_A$ and $G_0\cap G_B$ are appropriate subgroups of $G_A$ and $G_B$ respectively.
Therefore, the same inductive argument shows that if $H$ is a subgroup of $G_0$ not containing a non-abelian free subgroup, then $H\leqslant \Z^s$ with $s\leq |V|$.

Now assume that the graph is irreducible. If $\Link(v) = V \setminus \{v\}$ for any $v \in V$ our graph $\Gamma$ would be complete, and hence reducible as it is EAFC; so there is a vertex $v\in V$ such that $\Link(v)\neq V-\{v\}$. Let $A =\{v\}\cup \Link(v), B = V \setminus \{v\}$ and $C = \Link(v)$. Therefore $G_\Gamma = G_A *_{G_C} G_B$ is a non-trivial amalgamated free product. Let $T$ be the corresponding Bass-Serre tree.

Now a subgroup $H$ of $G_\Gamma$ that does not contain a non-abelian free subgroup, either fixes a vertex of $T$, or there is a bi-infinite line of $T$ invariant under $H$, or $H$ fixes an end of $T$; otherwise, a ping-pong argument allows to construct a non-abelian free subgroup. See, for example \cite[Section 2]{CullerMorgan}.

{\bf Case 1: $H$ fixes a vertex}. Up to conjugation, $H$ is a subgroup of $G_A$ or $G_B$, which by induction implies that $H$ is a isomorphic to $\Z^{s}\times K^t$ where $s + 2t\leq \text{max}\{|A|, |B|\} \leq |V|$. 
Similarly, $H\cap G_0$ is isomorphic to $\Z^s$ with $s\leq \text{max}\{|A|, |B|\} \leq |V|$.

{\bf Case 2: $H$  acts on a bi-infinite geodesic line $\mathcal{L}$}. Here the action of $H$ induces a homomorphism $\Phi:H\rightarrow D_{\infty}$, where $D_{\infty}$ is the infinite dihedral group of all simplicial isometries of $\mathcal{L}$. 
If $\Phi(H)$ is finite, $H$ would fix a vertex of $\mathcal{L}$, and hence we would be on the previous case, so assume that $\Phi(H)$ is infinite. 
If $\Phi(H)\cong D_\infty$, then there is $h\in H$ such that $hG_C\neq G_C$ but $h^2G_C=G_C$. Then $h^2\in G_C$ and by roots closure $h\in G_C$, which is impossible. So this case does not hold.

Therefore, we can assume that $\Phi(H)$ is infinite cyclic.

Put $N\coloneqq \ker \Phi \triangleleft H$; then $N$ fixes every vertex and edge of $\mathcal{L}$ and so it is contained, up to conjugation,  in  $G_C$. 
Therefore, by induction, $N$ is isomorphic to  $\Z^s\times K^t$ with $s + 2t \leq |C|$ (resp. $N\cap G_0$ is isomorphic to $\Z^s$ with $s\leq |C|$). 
We will assume that $N$ is a subgroup of $G_C$, and hence, $P^{\Gamma}(N) \leqslant G_C$.

If $N=\{1\}$ then $H$ is infinite cyclic. 
So, suppose that $N \neq \{1\}$ and notice that $H \leq N_G(N) \leq N_G(P^{\Gamma}(N))$.

As $\Phi(H)$ is infinite cyclic, then $H=N\rtimes \Phi(H)$. 
Let $h\in H$ such that $\Phi(\langle h \rangle)=\Phi(H)$. Note that $h\notin G_C$ as $h$ does not fix this edge. Thus $h\in N_G(P^{\Gamma}(N))\setminus G_C$; as $P^{\Gamma}(N) \leqslant G_C$ we have $h\in N_G(P^{\Gamma}(N))\setminus P^{\Gamma}(N)$ and therefore, by Lemma \ref{lem: commuting rep}, up to changing $h$ with another  element  of $hP^{\Gamma}(N)$, we can assume that $h$  commutes with $P^{\Gamma}(N)$ and with $N$, thus $H \cong N\times \langle h\rangle.$ 
And hence $H$ is isomorphic to  $\Z^s\times K^t$ with $s + 2t \leq |C|+1\leq |V|$ (resp. $H\cap G_0$ is isomorphic to $\Z^s$ with $s\leq |C|+1\leq |V|)$.

{\bf Case 3: $H$ fixes some end $e$ of $\mathcal{T}$.} 
If an element $g\in H$  fixes some vertex $o\in \mathcal{T}$, 
then it will have to fix every vertex of the unique infinite geodesic ray between $o$ and $e$. 
If $h\in H$  is another elliptic element, then it will fix (point-wise) another geodesic ray converging to $e$. 
But any two rays converging to $e$ are eventually the same, in particular they will have a common vertex, which will then be fixed by both $g$ and $h$. Thus $gh\in H$ will also be elliptic. It follows that the subset $N\subset H$, of all elliptic elements of $H$, is a normal subgroup of $H$.
Moreover, as $\Gamma$ is finite, there is a finite subset $X'$ of $N$ such that the parabolic closure of $X'$ and $N$ agree. 
Note that the previous argument shows that there is a common fixed vertex for all elements of $X'$ (and therefore a ray from that vertex to $e$) so the parabolic closure of $X'$ (and hence of  $N$) is contained up to conjugation in $G_C$. 
So, by induction $N$ is isomorphic to  $\Z^s\times K^t$, $s + 2t \leq |C|$ (and $N\cap G_0\cong \Z^s$ with $s\leq |C|)$.

If $H\neq N$, let $x \in H \setminus N$ be an element (necessarily loxodromic) of minimal translation length (for the action of $H$ in $\mathcal{T}$). 
For any other element $y \in H \setminus N$, the intersection of $axis(x)$ and $axis(y)$ is an infinite geodesic ray $\mathcal{R}$, starting at some vertex $p$ of $\mathcal{T}$ and converging to $e$. 
After replacing $x$ and $y$ with their inverses, where necessary, we assume that $x\circ \mathcal{R}\subset \mathcal{R}$, and $y\circ \mathcal{R}\subset \mathcal{R}$. Write $\lVert y\rVert = q \lVert x \rVert + r$ where $q\in \mathbb{N}, r\in \mathbb{N}\cup \{0\}$, and $r < \lVert x \rVert$. 
 Then $y\circ p, (x^{-m}y)\circ p \in \mathcal{R}$ and so $d_{\mathcal{T}}(p, (x^{-m}y)\circ p) = \lVert y \rVert - q \lVert x \rVert = r$. So $\lVert x^{-m}y \rVert \leq r < \lVert x \rVert$, which implies that $x^{-m}y \in  N$ by minimality of $\lVert x \rVert$. 
So $y\in \langle x \rangle  N$ for all $y$ in $H-N$, which means that $H = \langle x \rangle N$.
As $x\in H\leqslant N_G(N)\leqslant N_G(P^{\Gamma}(N))$, we have that $x$ (up to changing it with another $P^{\Gamma}(N)$-coset representative) commutes with $P^{\Gamma}(N)$ and hence with $N$. 
We have that $H\cong N\times \mathbb{Z}$. 
Hence $H$ is isomorphic to $\Z^s\times K^t$, for some $s + 2t \leq |C| + 1$ (and $N\cap G_0\cong \Z^s$ with $s\leq |C|+1)$.
\end{proof}

We need the following fact to prove the main theorem. 
The structure of the proof mimics a bit the previous one, so we will give fewer details.
\begin{lemma}\label{lem: equation in G_0}
Let $\Gamma$ be a finite EAFC system, and $G_\Gamma$ the corresponding group.
Let $G_0$ be an appropriate subgroup.
 The following holds for~$x,y,z\in G_0$:
$$\begin{cases}
xyx^{-1}=y\\
zxz^{-1}=y
\end{cases} \Rightarrow x=y.$$
\end{lemma}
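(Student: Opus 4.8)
My plan is to read the two hypotheses as the single statement that $x$ commutes with its own conjugate $y=zxz^{-1}$, and to show that inside an appropriate subgroup such a commuting conjugate must coincide with $x$. Since $xyx^{-1}=y$, the group $\langle x,y\rangle$ is abelian and so, by the second part of Theorem \ref{thm: v abelian subgroups}, free abelian of rank at most $2$; in particular I may assume $x\ne 1$. The same theorem records the feature I will exploit repeatedly: $G_0$ contains no subgroup isomorphic to $K$, because $K$ is neither free abelian nor contains $\mathbb F_2$. This is exactly the feature that fails in $D_{2n}$ itself, where a generator is conjugate to its inverse, and it is the reason the statement is false without passing to $G_0$.

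I would then argue by induction on $|V|$, following the skeleton of the proof of Theorem \ref{thm: v abelian subgroups}. If $\Gamma$ is reducible, say $G_\Gamma=G_A\times G_B$, then $G_0\cong(G_0\cap G_A)\times(G_0\cap G_B)$ with each factor appropriate (Remark \ref{rem: G_0 standard}); projecting $x,y,z$ and both relations to the factors and applying the inductive hypothesis gives the claim coordinatewise. The base cases are $|V|\le2$: when $G_\Gamma$ is free the statement is the elementary fact that in a free group a non-trivial element commuting with a conjugate of itself equals that conjugate, and when $G_\Gamma=D_{2n}$ I would combine the homomorphism $\psi\colon D_{2n}\to\Z$ of Lemma \ref{lem: D2n is Fn by Z} (which has free kernel) with the absence of a Klein-bottle subgroup in $G_0$ to exclude the dihedral flip. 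This is the one place where the hypothesis $z\in G_0$ is genuinely used.

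For the inductive step with $\Gamma$ irreducible, $\Gamma$ is not complete (otherwise $G_\Gamma$ would be a direct product, Lemma \ref{lem: EAFC over complete graphs}), so I fix a splitting $G_\Gamma=G_A*_{G_C}G_B$ with Bass--Serre tree $T$ and distinguish two cases according to the parabolic closure $P(x)$. Suppose first $P(x)=G_\Gamma$; then $x$ lies in no proper parabolic, hence acts loxodromically on $T$, and so does the commuting conjugate $y$, with the same translation length. Two commuting loxodromic isometries share an axis $\ell$, so $z$ stabilises $\ell$. Here Theorem \ref{thm: main roots} does the decisive work: were $z$ to act on $\ell$ as an orientation-reversing isometry, $z^2$ would fix an edge of $\ell$ and thus lie in a conjugate of $G_C$, forcing $z$ itself into that parabolic by closure under roots, which is impossible for a reflection. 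Hence $z$ translates along $\ell$, the element $w:=yx^{-1}$ acts trivially on $\ell$, so it is elliptic and commutes with $x$. If $w\ne1$ then $P(w)$ is a proper parabolic and $x\in C_G(w)\le N_G(P(w))$ by Lemma \ref{lem: normalizer  subsgroup of normalizer of Parabolic Closure}; conjugating $P(w)$ to a standard $G_R$ and using Godelle's description $N_G(G_R)=G_R\cdot QZ_G(G_R)$ together with Lemma \ref{lem: commuting rep} (so that the quasi-centraliser centralises $G_R$) puts $x$ into the parabolic generated by $R$ and its commuting complement; irreducibility then forbids this from being all of $G_\Gamma$, contradicting $P(x)=G_\Gamma$. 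Therefore $w=1$ and $x=y$.

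In the remaining case $P(x)\ne G_\Gamma$, I would conjugate $P(x)$ to a standard proper parabolic $G_S$ (conjugation preserving $G_0$ by normality); commutativity then forces $y$ into $N_G(G_S)$, and after discarding a factor that centralises $x$ one reduces to $y\in G_S$. Applying the retraction $\rho_S$ to $y=zxz^{-1}$ yields a conjugator $\rho_S(z)\in G_S$, and the inductive hypothesis in $G_S$, with the appropriate subgroup $G_0\cap G_S$, gives $x=y$. The step I expect to be the main obstacle is precisely this last reduction, together with its analogue when $x$ is elliptic: the inductive hypothesis requires a conjugator inside $G_0\cap G_S$, whereas a priori $\rho_S(z)$ need only lie in $G_S$. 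I would overcome this by using Godelle's normaliser description once more to split $z$ as a product of an element of $G_S$ and a centralising element, and then correcting the $G_S$-part by an element of $C_{G_S}(x)$ so that the resulting conjugator has trivial image under the homomorphism defining $G_0$; establishing that such a correction is always available is the delicate point of the argument.
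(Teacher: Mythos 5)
Your skeleton (induction on $|V|$, projections in the reducible case, the amalgam splitting and its Bass--Serre tree in the irreducible case, root-closure to rule out orientation-reversing behaviour on the axis) matches the paper's, but there is a genuine gap at the heart of your case $P(x)=G_\Gamma$. Having correctly shown that $w=yx^{-1}$ acts trivially on the common axis and commutes with $x$, you claim that $x\in C_G(w)\leqslant N_G(P(w))$ forces $x$ into a proper parabolic, contradicting $P(x)=G_\Gamma$. That implication is false: Godelle's theorem with Lemma \ref{lem: commuting rep} gives $N_G(G_R)=G_R\cdot QZ_G(G_R)$ with the quasi-centralizer centralizing $G_R$, but $QZ_G(G_R)$ is \emph{not} contained in the parabolic on the commuting complement of $R$, and $N_G(G_R)$ need not lie in any proper parabolic even for $\Gamma$ irreducible and $\emptyset\neq R\subsetneq V$. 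Concretely, take $V=\{a,b,c\}$ with $m(a,b)=4$, $m(b,c)=2$ and no edge $\{a,c\}$: this is an irreducible EAFC system, the elements $(ab)^2$ and $c$ both centralize $b$, and $(ab)^2c$ is a loxodromic element of $N_G(P(b))$ whose parabolic closure is all of $G_\Gamma$ (its abelianized image $(2,2,1)$ rules out every proper parabolic). So from $w\neq 1$ you cannot conclude anything contradicting $P(x)=G_\Gamma$, and the key conclusion $w=1$, hence $x=y$, is unjustified.

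The paper closes this case by a different mechanism that you should adopt: it sets $H=\langle x,y,z\rangle$ acting on the axis $\mathcal{L}$ via $\Phi\colon H\to D_\infty$, uses root-closure to exclude $\Phi(H)\cong D_\infty$ (your argument for this part is fine), notes $N=\ker\Phi$ lies in a conjugate of $G_{\operatorname{Lk}(v)}$, and uses Lemma \ref{lem: commuting rep} to split $H\cong N\times\langle h\rangle$; then it applies the inductive hypothesis to the images of $x,y,z$ under the two coordinate projections (which stay in $G_0$ and, for the $N$-coordinate, in a proper parabolic), getting $x=y$ directly without ever invoking $P(x)=G_\Gamma$. A similar remark applies to your other case: the paper's dichotomy is elliptic versus loxodromic rather than $P(x)=G_\Gamma$ versus not, and in the elliptic case commuting elements not generating a free group fix a \emph{common} vertex, so $x$ and $y$ both land in one conjugate $gG_Ag^{-1}$ and the retraction of $z$ onto it is the needed conjugator; this sidesteps both of the holes you flag at the end (forcing $y$ into $G_S$, and correcting the conjugator into $G_0$), the latter because the standard retractions carry the $G_0$ of Lemma \ref{lem: G_0} into itself.
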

\begin{proof}
The proof is by induction on $|V|$.
If $|V|=1$ there is nothing to prove.
If $|V|=2$, then $\langle x,y,z\rangle$ is either a subgroup of $\mathbb{Z}^2$, in which case there is nothing to prove, or there exist a surjective homomorphism $f\colon \langle x,y,z\rangle \to \mathbb{F}_2$.
In the latter case, we are going to derive a contradiction.
Note that $f(x)$ and $f(y)$ must commute, and hence they must lie in a cyclic subgroup $C$ of $\mathbb{F}_2$. 
If $C$ is trivial, then $f$ can not be surjective, and hence $f(x)$ and $f(y)$ are non-trivial.
Also $f(z)Cf(z)^{-1}\cap C$ is non-trivial, and hence $f(z)$ must lie in a cyclic subgroup with $f(x)$ and $f(y)$, contradicting that $f$ was surjective. 

Assume that $|V|>2$. 
Suppose that $\Gamma$ is reducible, i.e. $V=A\cup B$, $A\cap B=\emptyset$, $G=G_A\times G_B$.
Let $x,y,z$ as in the statement.
Then $\rho_A(x)$ commutes with $\rho_A(y)$ and $\rho_A(z)$ conjugates $\rho_A(x)$ to $\rho_A(y)$, and hence $ \rho_A(x)=\rho_A(y).$
The same occurs with $\rho_B(x),\rho_B(y),\rho_B(z)$, and hence $x=y$.

Let $v\in V\Gamma$ such that $\Link(v)\neq \Delta = \Gamma \setminus \{v\}$. 
We have a splitting $$G_{\Gamma} = G_{\Star(v)} *_{G_{\Link(v)}} G_{\Delta},$$
and let $T$ be the associated Bass-Serre tree. 

Since $x,y$ are conjugated, then either both fix a vertex or both act loxodromically.

If $x$ fixes a vertex $u$ and $y$ fixes $w$ but $x$ does not fix $w$ and $y$ does not fix $u$, then $\langle x,y\rangle$ is non-abelian free. 
A contradiction. 
Thus if $x,y$ fix a vertex, then they fix a common vertex, and hence $x,y\in g G_A g^{-1}$ for a proper parabolic subgroup.
Let $\rho\colon G \to g G_A g^{-1}$ be the retraction associated to $gG_Ag^{-1}$.
Note that $\rho(z)$ conjugates $x$ to $y$.
Thus, by induction $x=y$.

Now suppose that both $x,y$ act loxodromically. Then $\mathrm{axis}(y)=\mathrm{axis}(zxz^{-1})=z\mathrm{axis}(x)$. 
But also as $x,y$ commute, $\mathrm{axis}(x)=\mathrm{axis}(y)$, so we have that $z$ fixes $\mathrm{axis}(x)$.
Let $H=\langle x,y,z\rangle$. 
Then $H$ acts on $\mathcal{L}=\mathrm{axis}(x)$ and we have a homomorphism associated to the action $\Phi\colon H\to \mathrm{Aut}(\mathcal{L})=\mathbb{D}_\infty$.
As $x,y$ act loxodromically, $|\Phi(H)|$ is infinite.
Let $N=\ker \Phi$.
Up to changing $H$ by a conjugate, we can assume that $N\leqslant G_{\Link(v)}$.
As in the previous proof $\Phi(H)\not\cong \mathbb{D}_\infty$ and hence $\Phi(H)$ is infinite cyclic.
Arguing as in the previous proof $H= N\times \langle h\rangle$ for certain $h\in H$.
Let $\rho_N\colon H\to N$ and $\rho_{\langle h\rangle}\colon H\to \langle h\rangle$.
Then $\rho_N(x)$ commutes with $\rho_N(y)$ and $\rho_N(z)$ conjugates $\rho_N(x)$ to $\rho_N(y)$. 
As $N$ is contained in a proper parabolic subgroup, by induction $\rho_N(x)=\rho_N(y)$.
Similarly, $\rho_{\langle h\rangle}(x)=\rho_{\langle h\rangle}(y)$ and hence $x=y$.
\end{proof}


\section{Subgroups containing non-abelian free groups}
\label{sec: large}
In this section we complete the proof of our main theorem. 
We will prove this general criterium
\begin{theorem}\label{thm:criterium}
Let $G$ be a group acting on a tree $T$ with finitely generated abelian stabelizers of rank uniformly bounded.
Suppose that edge stabilizers are direct factors on vertex stabilizers and that for all $x,y,z\in G$ ($xy=yx$ and $zxz^{-1}=x$ implies $x=y$).
If $G$ is not solvable, then $G$ maps onto $\mathbb{F}_2$.
\end{theorem}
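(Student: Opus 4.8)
The plan is to run Bass--Serre theory on the action and reduce to a one-edge splitting where the two retraction hypotheses (edge groups being direct factors, and the equation condition) can be exploited. First I would pass to a minimal $G$-invariant subtree and write $G=\pi_1(\mathcal{G},Y)$ as the fundamental group of the associated graph of groups, whose vertex groups are the (abelian) vertex stabilizers and whose edge groups are the edge stabilizers, each a direct factor of its incident vertex group. By Remark \ref{rem: map onto the fundamental group} there is a surjection $q\colon G\to\pi_1(Y)$ onto a free group; if $\pi_1(Y)$ has rank at least $2$ we postcompose with a surjection $\pi_1(Y)\to\mathbb{F}_2$ and are done. So the substantive case is $\operatorname{rank}\pi_1(Y)\le 1$.

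Next I would record that non-solvability forbids all the ``small'' configurations. If $G$ fixes a vertex then $G$ is abelian; if $G$ stabilizes a line or fixes an end then the kernel of the resulting map to the infinite dihedral group (respectively the Busemann map to $\mathbb{Z}$) lies in an edge stabilizer, hence is abelian, so $G$ is metabelian. In every case $G$ would be solvable, contrary to hypothesis. Thus the action is of general type and every essential one-edge splitting it produces is genuinely nontrivial.

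The core mechanism is to kill all edge groups at once. Let $N$ be the normal closure of the union of all edge groups; then $G$ surjects onto $\bigl(\ast_{v}\overline{G_v}\bigr)\ast\pi_1(Y)$, where $\overline{G_v}=G_v/\langle\text{incident edge factors}\rangle$ is again free abelian precisely because each incident edge group is a direct factor of the abelian $G_v$. This is a free product of free abelian groups with a free group, and it surjects onto $\mathbb{F}_2$ as soon as two of its factors are nontrivial, since every nontrivial free abelian group surjects onto $\mathbb{Z}$ and $\mathbb{Z}\ast\mathbb{Z}=\mathbb{F}_2$. In the local amalgam picture $A\ast_{C}B$ this is the statement that $A=C\times A'$ and $B=C\times B'$ with $A',B'\neq 1$; nontriviality is forced by minimality, as $C=A$ or $C=B$ would collapse the edge and leave $G$ inside a vertex stabilizer, hence abelian. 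Then $G\twoheadrightarrow A'\ast B'\twoheadrightarrow\mathbb{F}_2$.

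The remaining, and hardest, case is when at most one factor survives, which is the HNN situation $A\ast_{C}$ over a single abelian vertex group $A$. Here the two embeddings of $C$ into $A$ differ by conjugation by the stable letter, $t^{-1}ct=\phi(c)$. Since $c$ and $\phi(c)$ both lie in the abelian group $A$ they commute, and they are conjugate, so the equation hypothesis (namely $xy=yx$ together with $zxz^{-1}=y$ forcing $x=y$) gives $\phi=\operatorname{id}$; that is, $t$ centralizes $C$. Writing $A=C\times A'$ and killing $C$ now yields $G\twoheadrightarrow A'\ast\langle t\rangle\cong A'\ast\mathbb{Z}$, which surjects onto $\mathbb{F}_2$ whenever $A'\neq 1$; if instead $A'=1$ then $A=C$ and $G\cong C\times\mathbb{Z}$ is abelian-by-cyclic, hence solvable, a contradiction. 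I expect the main obstacle to be exactly this degenerate analysis: controlling the one-edge pictures (via minimality and the uniform rank bound, presumably through an induction on the rank of the stabilizers) so that a nontrivial complementary factor always survives. It is precisely the equation condition that rules out the twisted HNN extension which would otherwise obstruct the retraction onto a free product, so I would expect it to be indispensable exactly at this step.
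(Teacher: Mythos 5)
Your overall architecture (pass to a minimal subtree, dispose of the case where $\pi_1(Y)$ is free of rank $\ge 2$, and use condition (iii) to untwist the HNN case) matches the paper's, but the central step --- killing the normal closure of \emph{all} edge groups at once and claiming the resulting free product $\bigl(\ast_v \overline{G_v}\bigr)\ast\pi_1(Y)$ has two nontrivial factors --- has a genuine gap. If a vertex $v$ has several incident edges, the subgroup of $G_v$ generated by all incident edge groups need not be a proper direct factor even though each one individually is: in $\Z^2=\langle a,b\rangle$ the subgroups $\langle a\rangle$ and $\langle b\rangle$ are each proper direct factors but together generate everything, so $\overline{G_v}$ can be trivial (or finite, as with $\langle a\rangle$ and $\langle ab^2\rangle$), and your quotient can fail to surject onto $\mathbb{F}_2$. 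Your nontriviality argument via minimality only covers the one-edge amalgam $A\ast_C B$; a minimal action of general type with $\operatorname{rank}\pi_1(Y)\le 1$ can have a quotient graph that is a long segment or even an infinite tree of infinite diameter, and nothing in your proposal reduces it to a one-edge picture. This is exactly where the paper works hardest: it locates two positions in $Y$ where the edge group is a proper direct factor of the adjacent vertex group (two valency-one vertices, a valency-one vertex together with the loop edge, or --- using the uniform rank bound --- two rank-drops along an infinite geodesic in $Y$), kills only what lies between them, and extends the retraction $G(\ol{u})\to G(\ol{u})/G(\ol{e})$ over the whole subtree $Y_{\ol{u}}$ edge by edge to produce two independent surjections onto $\Z$.

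The circle case is also treated too quickly. Your commutation argument ($c$ and $t^{-1}ct$ both lie in the abelian vertex group, hence commute, hence coincide by (iii)) applies only when the circle has a single vertex; with several vertices the element $x\in R_0\le G(u_0)$ and its conjugate in $L_n\le G(u_n)$ need not commute a priori, so (iii) identifies only the commuting pairs, and the paper still needs the Smith-normal-form basis count to show that the quotient by the normal closure of $R_0\cup L_n$ has infinite abelianization. So the skeleton of your plan is right, but the degenerate analysis you yourself flagged as ``the main obstacle'' is in fact missing, and it constitutes most of the proof.
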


The strategy of proof of Theorem \ref{thm:criterium} is quite natural.
In fact, other authors have used similar arguments to show largeness of groups acting on trees with abelian stabilizers. 
For exmple J.O. Button \cite[Theorem 3.7]{Button} for tubular groups and G. Levitt for Generalized Baumslag-Solitar groups  \cite[Theorem 6.7]{Levitt}. 

The proof of Theorem \ref{thm:criterium} is split into two propositions.
The first one is the case where  $G\backslash T$ is not a circle and it is proved in the next proposition. 
This case is  easy when $G\backslash T$ is finite, and slightly more technical in the general case. 

Instead, the second case when   $G\backslash T$ is homeomorhic to a circle is proved in 
Proposition \ref{prop: map to F_2 circle}.


\begin{prop}\label{prop: map to F_2 except from circle}
Let $G$ act on a tree $T$ without a global fixed point. 
Assume that
\begin{enumerate}
\item[(i)] there is no proper $G$-subtree of $T$,
\item[(ii)] there is $M \in \N$ such that vertex stabilizers are free abelian groups of rank $\leq M$,
\item[(iii)] edge stabilizers are direct factors on vertex stabilizers.
\end{enumerate}
If $G\backslash T$ is not homeomorphic to a circle, then $G$ maps onto $\mathbb{F}_2$. 
\end{prop}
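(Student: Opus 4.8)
The plan is to realize $G$ as the fundamental group of the graph of groups attached to the action, $G\cong\pi_1(\mathcal G,Y)$ with $Y=G\backslash T$, and to produce the surjection onto $\mathbb{F}_2$ by an explicit, essentially combinatorial, homomorphism. The cheapest source of free quotients is the underlying graph itself: by Remark \ref{rem: map onto the fundamental group} there is a surjection $G\twoheadrightarrow\pi_1(Y)$, and $\pi_1(Y)$ is free of rank the first Betti number $b_1(Y)$. Hence if $b_1(Y)\ge2$ we are done, and I may assume $b_1(Y)\in\{0,1\}$, i.e. $Y$ is a tree or is homotopy equivalent to a circle.

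The core of the argument is a construction feeding ``two independent directions'' into the two free generators of $\mathbb{F}_2=\langle a,b\rangle$. Call a \emph{source} either a non-tree edge of $Y$ (a loop, contributing a stable letter), or a vertex $v$ for which the incident edge groups fail to span $G_v$ up to finite index, i.e. $\langle G_e : e\ni v\rangle$ has infinite index in $G_v$; more generally a finite connected subgraph carrying a surjection onto $\Z$ that kills every edge group on its boundary. Given two disjoint sources I would define $\Phi\colon G\to\mathbb{F}_2$ by sending the first source onto $\langle a\rangle$, the second onto $\langle b\rangle$ (using that a nontrivial finitely generated free abelian quotient surjects onto $\Z$, which is exactly where the infinite-index condition is needed), and every other vertex group and stable letter to $1$. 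The only thing to verify is that all edge relations $t_e^{-1}g\,t_e=g^{t_e}$ survive, which holds because each edge group enters the picture only through groups sent to $1$ or through the single source to which it belongs. Since the image is $\langle a,b\rangle$, the map is onto $\mathbb{F}_2$, so it suffices to exhibit two disjoint sources.

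Minimality is what manufactures sources. Hypothesis (i) forces every vertex of $T$ to have valence $\ge2$: otherwise deleting the orbit of a valence-one vertex would leave a proper $G$-subtree, the degenerate case where $T$ is a single edge being excluded by the absence of a global fixed point. Consequently, at a \emph{leaf} $\ell$ of $Y$ the unique incident edge group satisfies $G_e\subsetneq G_\ell$, since equality would make the lift of $\ell$ have valence one in $T$. As $G_e$ is a direct factor by (iii) and a proper direct factor of a free abelian group has strictly smaller rank, $G_e$ has infinite index and $\ell$ is a source. The finite case is now immediate: a finite tree with at least one edge has at least two leaves; and if $b_1(Y)=1$ with $Y$ not homeomorphic to a circle, then an Euler characteristic count (a finite connected graph with all valences $\ge2$ and $b_1=1$ is a circle) forces a leaf, which together with the loop gives two sources. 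Either way $G\twoheadrightarrow\mathbb{F}_2$.

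The remaining, and genuinely harder, situation is when $Y$ is infinite with $b_1(Y)\le1$ and has no leaves, so the previous supply of sources dries up; here the uniform rank bound $M$ of hypothesis (ii) must be used. Working rationally in $G_v\otimes\mathbb{Q}$, I would search for an edge $e$ whose edge subspace is not contained in the span of the remaining edge subspaces at one of its endpoints; such an $e$ can be isolated by a linear functional and then spread to a homomorphism onto $\Z$ over a finite region vanishing on that region's boundary edges, yielding a source. The point of bounded rank is to prevent the edge subspaces from refilling the vertex spaces indefinitely along an infinite ray, guaranteeing that two such disjoint finite regions exist; assembling them as above produces the surjection onto $\mathbb{F}_2$. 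I expect this existence of two disjoint sources in the leafless infinite case to be the main obstacle, the finite case and the homomorphism construction being routine by comparison.
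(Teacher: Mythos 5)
Your overall architecture coincides with the paper's: surject onto $\pi_1(Y)$ and dispose of $b_1(Y)\geq 2$; use minimality (i) plus the direct-factor hypothesis (iii) to show that at a leaf of $Y$ the edge group is a \emph{proper} direct factor of the vertex group, hence admits a quotient onto $\Z$ killing the edge group; and combine two such ``sources'' (two leaves, or a leaf and a loop) into a surjection onto $\mathbb{F}_2$. Those parts are correct and match the paper's argument essentially verbatim.

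The gap is in the infinite leafless case, which you correctly identify as the main obstacle but then resolve by an argument that does not work as stated. You attribute the existence of sources there to the rank bound (ii), via a linear-algebra search for an edge subspace not spanned by its neighbours. But bounded rank alone does not produce even one edge with $G(e)\subsetneq G(\iota e)$ or $G(e)\subsetneq G(\tau e)$: an infinite branch of $Y$ along which every edge group equals both adjacent vertex groups is perfectly compatible with (ii) and (iii), and on such a branch every ``edge subspace'' fills the whole vertex space, so no linear functional isolates anything. What rules this configuration out is minimality (i), invoked a \emph{second} time: if all stabilizers along an infinite sub-branch coincided with the stabilizer of the edge attaching it, the lifted subtree could be removed $G$-equivariantly, contradicting (i). This is how the paper manufactures infinitely many edges with a rank drop along an infinite geodesic of $Y$; the uniform bound $M$ is then used only for a different purpose, namely to guarantee that among these infinitely many drops one can choose two whose drops point \emph{away} from each other (ranks cannot increase strictly infinitely often along a ray), which is exactly the orientation needed so that each outer piece maps onto $\Z$ with its boundary edge group in the kernel. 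Your sketch addresses neither the production of the first drop nor the orientation issue. A secondary omission: ``spreading'' the quotient $G(v)\to G(v)/G(e)\to\Z$ over a finite tree region so that it kills the boundary edge groups is not automatic; it requires the inductive extension over each internal edge using the direct-factor hypothesis (this is where (iii) is used beyond the leaf case), which you assert but do not construct.
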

\begin{proof}
Let $Y= G\backslash T$ be the quotient graph, and $(G(-),Y)$ a graph of groups associated to the action of $G$ on $T$.

Recall that $G$ maps onto $\pi_1(Y)$ (Remark~\ref{rem: map onto the fundamental group}), so if $\pi_1(Y)$ is non-abelian then we are done. 


So we are restricted to the cases $\pi_1(Y)$ is trivial or infinite cyclic.

Suppose first that $\ol{u}\in VY$ is a valency $1$ vertex and $\ol{e}$ is the edge incident to $\ol{u}$.
Let $u\in VT$ and $e\in ET$ such that $e$ is adjacent to $u$,  $Gu=\ol{u}$, and $Ge=\ol{e}$.

If $\mathrm{rank}(\mathrm{Stab}(u))$ and $\mathrm{rank}(\mathrm{Stab}(e))$ are equal, then since $\mathrm{Stab}(e)$ is a direct factor of $\mathrm{Stab}(u)$, and both are free abelian groups, they must be equal.
This means that  $u$ is a valency 1 vertex in $T$ and clearly, we can remove the orbit of $e$ and $u$ from $T$ and still have a $G$-tree. This however, contradicts (i).
Thus, for every  valency 1 vertex $\ol{u}\in VY$ with adjacent edge  $\ol{e}$,
we have that $G(\ol{e})$ is a proper direct factor of $G(\ol{u})$.

Suppose that there are two valency 1 vertices $\ol{u},\ol{v}\in VY$ with adjacent edges $\ol{e}$ and $\ol{f}$ respectively.
Let  $N$ be the normal subgroup generated by $G(\ol{e}),G(\ol{f})$, and  $G(\ol{z})$ for all $\ol{z}\in VY-\{\ol{u},\ol{v}\}$.
From the presentation of the graph of groups, we see that $G/N=G(\ol{u})/G(\ol{e}) * G(\ol{v})/G(\ol{f})\cong \Z^s*\Z^t$ with $s,t>0$.
Therefore $G$ maps onto $\mathbb{F}_2$.

Suppose that $\pi_1(Y)$ is cyclic  and there is a valency 1 vertex with adjacent edge $\ol{e} = \{\ol{u},\ol{v}\}$.
Similarly as before, let $N$  be the normal subgroup generated by $G(\ol{e})$ and  $G(\ol{z})$ for all $\ol{z}\in VY-\{\ol{u},\ol{v}\}$.
Fix a maximal subtree $Y_0$ of $Y$ and let $\ol{f}$ be the unique edge outside of $Y_0$. 
Note that $\ol{f}\neq\ol{e}$.
From the presentation of the graph of groups, we see that $G/N=G(\ol{u})/G(\ol{e}) * \langle t_{\ol{f}} \rangle\cong \Z^s*\Z$ with $s>0$.
Therefore $G$ maps onto $\mathbb{F}_2$.

So we are restricted to the following cases:
\begin{itemize}[itemsep=0pt, topsep=0pt]
\item[(i)] $\pi_1(Y)$ is infinite cyclic and there is no vertices of valency $1$, or
\item[(ii)] $\pi_1(Y)=\{1\}$ and there is at most one vertex of valency 1.
\end{itemize}
Note that in the first case, as $Y$ is not homemorphic to $\mathbb{S}^1$, $Y$ must consist of an $\mathbb{S}^1$ with trees attached. 
As those trees have no valency one vertices they must be infinite and of infinite diameter.
A similar argument holds of case (ii) so in either case $Y$ must be infinite and of infinite diameter. 

In this setting, there is  $\ol{e}\in EY$ such that  $Y-\{\ol{e}\}$ has at least one infinite connected component, say $Y'$ that is a tree without valency one vertices.
Let $\tilde{Y}'$ be a subtree of $T$ that maps to $Y'$ under the quotient map $T\mapsto G\backslash T$. 
Let $e$ be an edge of $T$ adjacent to $\tilde{Y}'$ mapping to $\ol{e}$.
If $\mathrm{Stab}(e)=\mathrm{Stab(\tilde{Y'})}$ then, removing from $T$ $G$-equivariantly the orbit of $e$ and $\tilde{Y'}$ we get a proper $G$-subtree, contradicting (i).
Therefore $\mathrm{Stab}(e)$ is a proper subgroup of $\mathrm{Stab(\tilde{Y'})}$, which means that there exist an edge $f\in EY'$ such that 
$\mathrm{Stab}(f)$ is a proper subgroup of either $\mathrm{Stab(\iota f)}$ or $\mathrm{Stab(\tau f)}$.
If $\ol{f}$ denotes $Gf$, this means that $G(\ol{f})$ is a proper direct factor either of $G(\iota \ol{f})$ of $G(\tau \ol{f})$.
As $\ol{f}\in EY'$, we have that $Y'-\{\ol{f}\}$ must have an infinite component that is a tree without valency 1 vertices.

Therefore, repeating the previous argument, we have an infinite geodesic path in $Y$ with infinitely many edges $\ol{e}$ such that $G(\ol{e})$ is a proper direct factor either of $G(\iota \ol{e})$ of $G(\tau \ol{e})$.
Now, as ranks of vertex groups are bounded by $M$ there must be two vertices, say $\ol{u},\ol{v}$ with adjacent edges $\ol{e},\ol{f}$ respectively,
such that $\ol{e},\ol{f}$ are in the unique path from $\ol{u},\ol{v}$ and 
 $\mathrm{rank}(G(\ol{e}))<\mathrm{rank}(G(\ol{u}))$ and 
 $\mathrm{rank}(G(\ol{f}))<\mathrm{rank}(G(\ol{v}))$. 
Moreover, as there infinitely many such pairs, we can assume that the connected component of $Y-\{\ol{e},\ol{f}\}$ containing the path between $\ol{u},\ol{v}$ is either empty ($\ol{f}=\ol{e})$ or a tree.

Let $Y_{\ol{u}}$ (resp $Y_{\ol{v}}$) be  the connected component of $Y-\{\ol{e},\ol{f}\}$ containing $\ol{u}$ (resp. $\ol{v}$) and let 
$K_{\ol{u}}$ (resp. $K_{\ol{u}}$) be the fundamental group of the graph of groups $(G(-),Y)$ restricted to $Y_{\ol{u}}$ (resp $Y_{\ol{v}}$).
Finally, if $\ol{e}\neq\ol{f}$ let $K$ be the fundamental group associated to the remaining connected component of $Y-\{\ol{e},\ol{u}\}$.
We see from the presentation of the fundamental group of the graph of groups that 
$$G= K_{\ol{u}}*_{G(\ol{e})}*K*_{G(\ol{f})}*K_{\ol{v}} \quad \text{  if }\ol{f}\neq \ol{e} \qquad \text{ or } \qquad G = K_{\ol{u}}*_{G(\ol{e})}*K_{\ol{v}}\quad  \text{  if }\ol{f}= \ol{e}.$$

Now, if $\pi_1(Y_{\ol{u}})=\mathbb{Z}$, then $K_{\ol{u}}$ maps onto $\Z$ having $G(\ol{e})$ in the kernel (see Remark \ref{rem: map onto the fundamental group}). 
If $\pi_1(Y_{\ol{u}})=\{1\}$, then $Y_{\ol{u}}$ is a tree and the map $\phi\colon: G(\ol{u})\to G(\ol{u})/G(\ol{e})$ extends to a map $K_{\ol{u}}\to  G(\ol{u})/G(\ol{e})$.
Indeed, as $Y_{\ol{u}}$ is a tree and edge groups are direct factors of vertex groups, we can extend $\phi$ to the $G(\ol{z})$ for the $\ol{z}$ that are in the neighborhood of $\ol{u}$. On the direct factor of $G(\ol{z})$ corresponding to the edge group adjacent of $\ol{u}$ and $\ol{z}$ there is only one way to extend $\phi$, but on the direct complement we can extend it freely. 
Repeating this construction inductively, we see that the extension exists.
In either case $K_{\ol{u}}$ maps onto $\Z$ having $G(\ol{e})$ in the kernel, and the same holds for $K_{\ol{v}}$, that is, it maps onto $\Z$ with $G(\ol{f})$ in the kernel.

Thus there are surjective homomorphism from $K_{\ol{u}}$ and $K_{\ol{v}}$ to $\Z$ and we see that they extend to a surjective homomorphism from $G$ to $\mathbb{F}_2$.
\end{proof}

\begin{prop}\label{prop: map to F_2 circle}
Let $G$ be a non-solvable group acting on a tree $T$ with $G\backslash T$ homeomorphic to a circle. 
Assume that
\begin{enumerate}
\item[(i)] vertex stabilizers are free abelian groups, 
\item[(ii)] edge stabilizers are direct factors on vertex stabilizers,
\item[(iii)] for all $x,y,z\in G$, if $xy=yx$ and $zxz^{-1}=y$ then $x=y$.
\end{enumerate}
Then $G$ maps onto $\mathbb{F}_2$. 
\end{prop}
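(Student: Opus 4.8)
The plan is to exploit the HNN decomposition forced by a circular quotient graph and to build a surjection onto $\mathbb{F}_2=\langle s,t\rangle$ in which the stable letter plays the role of one free generator. Since $Y=G\backslash T$ is a circle, I fix a maximal subtree $Y_0\subset Y$, which is a segment missing exactly one edge $\ol f$. Writing $B$ for the fundamental group of the graph of groups restricted to $Y_0$ (an iterated amalgam $A_0*_{C_0}A_1*_{C_1}\cdots*_{C_{k-2}}A_{k-1}$ of the free abelian vertex groups over a tree), Bass--Serre theory gives $G=B*_{C}$, an HNN extension with stable letter $t$ and associated inclusions $\alpha,\beta\colon C\to B$ of $C=G(\ol f)$ into $A_0$ and $A_{k-1}$, satisfying $t\beta(c)t^{-1}=\alpha(c)$. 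Here $t$ maps onto a generator of $\pi_1(Y)\cong\Z$ (Remark \ref{rem: map onto the fundamental group}); but since $\pi_1(Y)$ is abelian this alone does not produce $\mathbb{F}_2$, which is exactly why the circle is the delicate case.

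For the base case I would collapse every edge whose edge group equals one of its adjacent vertex groups, reducing the number of vertices while keeping the quotient a circle. If this collapses $Y$ to a single loop, then $A:=B$ is free abelian, $C$ is a direct factor, and for each $c\in C$ the elements $\alpha(c)$ and $\beta(c)$ lie in the abelian group $A$ and are conjugate by $t$; hypothesis (iii) then forces $\alpha(c)=\beta(c)$. Hence $\alpha=\beta$, their common image $C'$ is a direct factor $A=C'\times D$, and $t$ centralises $C'$, so $C'$ is central and $G\cong C'\times(D*\langle t\rangle)$. If $G$ were solvable we would need $D=1$ and $G\cong A\times\Z$ abelian; since $G$ is non-solvable, $D\neq 1$, and $D*\langle t\rangle\cong\Z^{\mathrm{rk}\,D}*\Z$ surjects onto $\Z*\Z=\mathbb{F}_2$. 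This is where (iii) and non-solvability enter decisively.

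In the remaining case every edge group is a proper direct factor of both adjacent vertex groups. The plan is to produce $\phi\colon G\to\mathbb{F}_2=\langle s,t\rangle$ with $\phi(t)=t$ and $\phi(B)\subseteq\langle s\rangle$. Because $B$ is an amalgam over a tree and the edge groups are direct factors, a homomorphism $B\to\langle s\rangle\cong\Z$ amounts to a family of homomorphisms $A_i\to\Z$ agreeing on each $C_i$, and the direct-factor property lets me extend any choice across a tree edge freely on a complement. The HNN relation then imposes only $\phi(\alpha(c))=\phi(\beta(c))$, i.e. that $\phi$ kill the difference of the two legs of $\ol f$; arranging in addition that $\phi(A_0)\neq 0$ makes $\langle\phi(A_0),t\rangle$ a rank-two free subgroup, giving a surjection onto a copy of $\mathbb{F}_2$.

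The main obstacle is precisely the existence of such a nonzero compatible homomorphism: a priori the segment edge groups $C_0,\dots,C_{k-2}$ together with the two legs $\alpha(C),\beta(C)$ could rationally span all the $A_i$, forcing $\phi=0$. I expect to resolve this by transporting the problem to the axis $L$ of $t$ and invoking (iii) once more: $t$ centralises the pointwise stabiliser $F=\bigcap_{e\subset L}\mathrm{Stab}(e)$ (since $tft^{-1}$ and $f$ commute and are conjugate), which forces the two legs to respect a common direct complement and supplies the required homomorphism, while non-solvability excludes the fully degenerate configuration in which $T$ is a line and $G\cong F\times\Z$ is abelian-by-cyclic. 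A careful bookkeeping of the ranks of the direct factors along the segment, in place of the crude dimension count, should then show the compatible homomorphism always exists once these degenerate cases have been ruled out.
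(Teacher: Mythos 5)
Your overall architecture matches the paper's: collapse every edge whose edge group equals an adjacent vertex group, treat the resulting single loop separately, and in the remaining case try to combine the stable letter with a surviving infinite cyclic quotient of the segment group. Your single-loop case is correct and essentially the paper's Case 1: hypothesis (iii) applied to $x=\beta(c)$, $y=\alpha(c)$, $z=t$ forces $\alpha=\beta$, the common image becomes central, $G\cong C'\times(D*\langle t\rangle)$, and non-solvability gives $D\neq 1$. The problems are in the main case. First, a local but genuine error: for a homomorphism $\phi\colon G\to\mathbb{F}_2=\langle s,t\rangle$ with $\phi(t)=t$ and $\phi(B)\leq\langle s\rangle$, the HNN relation $t\beta(c)t^{-1}=\alpha(c)$ becomes $t s^{m}t^{-1}=s^{m'}$ in the free group, which holds only when $m=m'=0$; so $\phi$ must kill \emph{both} legs $\alpha(C)$ and $\beta(C)$, not merely their difference as you assert. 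Second, and more seriously, the crux of the entire proposition is exactly the step you defer: proving that a nonzero homomorphism of the segment group to $\Z$ killing both legs exists (equivalently, that the quotient of $B$ by the normal closure of $\alpha(C)\cup\beta(C)$ has infinite abelianization). Your proposed resolution via the pointwise stabilizer of the axis of $t$ is a heuristic only --- it is not explained how centralizing that intersection ``forces the two legs to respect a common direct complement,'' and the promised rank bookkeeping is never carried out.

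The paper resolves precisely this point with a concrete counting argument: it fixes Smith-normal-form bases of each vertex group adapted to the incoming and outgoing edge groups $L_i$, $R_i$ and their intersection $M_i$ (so that the segment group becomes a RAAG), reduces to the amalgam of the two end vertex groups over a common direct factor, and then uses hypothesis (iii) to show that the bijection between the bases of the two legs induced by conjugation by the stable letter fixes every basis element that commutes with its image --- in particular every ``central'' generator of that amalgam. A cardinality count then shows the two legs cannot exhaust a generating set, so the quotient has infinite abelianization and $G$ modulo the normal closure of the removed edge group maps onto $\Z * \Z=\mathbb{F}_2$. Until you supply an argument of this kind (and correct the compatibility condition to killing both legs), your treatment of the circle case is incomplete.
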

\begin{proof}
Let $(G(-),Y)$ be the graph of groups associated to the action  of $G$ on $T$.
We have that $Y$ is a circle, $H(v)$ is free abelian for all $v\in VY$, for all $e\in EY$ $G(e)$ is a direct factor of $G(\iota e)$ and there is injective homomorphism $t_e\colon G(e)\to G(\tau e)$ such that $t_e(G(e))$ is a direct factor of $G(\tau e)$.

If $|EY|\geq 2$ and $e\in EY$ is such that $G(e)=G(\iota e)$ or $t_e(G(e))=G(\tau(e))$, then we can construct a graph of groups with fewer edges by removing $e$ and properly identifying $G(\iota e)$ with a subgroup of $G(\tau e)$ in the first case, or identifying $G(\tau e)$ with a subgroup of $G(\iota e)$ in the second case.
Observe that the remaining edge groups are still direct factors in the vertex groups in the new graph of groups.

Hence, we will assume that either there is a single edge, or all groups are proper subgroups on the adjancent vertex groups.
We have two cases now.

Case 1: $|EY|=1$. In this case, there is single vertex $u$ of $VY$. 
Here $G(u)$ must be finitely generated abelian.
If $\mathrm{rank}(G(e))=\mathrm{rank}((G(u))$, and taking into account that $G(e)$ and $t_e(G(e))$ are direct factors of $G(u)$, we have that $G(u)=G(e)$ and $t_e\colon G(u)\to G(u)$ is an isomorphism. 
In this case, $G$ is a semidirect product $G(u)\rtimes \Z$ and hence solvable.
A contradiction.

Then  $\mathrm{rank}(G(e))<\mathrm{rank}((G(u))$.
Note that $t_e$ conjugates elements of $G(e)$ to elements of $t_e(G(e))$, but elements of $G(e)$ and $t_e(G(e))$ commute.
By (iii), we have that $t_e\colon G(e)\to G(u)$ is the identity on the subgroup $G(e)$ of $G(u)$.
Then, if $N$ is the normal closure of $G(e)$ in $G$, we get that $G/N=G(u)/G(e)*t$, which maps onto $\mathbb{F}_2$.

Case 2: $|EY|\geq 2$.
Suppose that $VY=\{u_0,\dots u_{n-1}\}$ and $EY=\{e_0,\dots, e_{n-1}\}$ with $\iota e_i=u_i$, $\tau e_i = u_{i+1}$ with the index taken mod $n$.
Consider group $G'$ consisting on the fundamental group of the graph of groups $(G(-),Y')$ obtained by removing $e_n$.
We will show that $\overline{G'}$, the quotient of $G'$ by the normal closure of $G(e_n)\cup t_e(G(e_n))$, has infinite abelianization.
This implies that $G/\langle G(e_n)^G\rangle = \overline{G'} *\langle t_{e_n}\rangle$ maps onto $\mathbb{F}_2$.

The fundamental group of $G'$ is $$G(u_0)*_{G(e_0)} G(u_1)*_{G(e_1)}G(u_2)*\dots *_{G(e_{n-1})}G(u_n).$$
We set $L_i=G(e_i)\leqslant G(u_i)$ and $R_i=t_{e_{i-1}}(G(e_{i-1}))\leqslant G(u_{i})$, indexes mod $n$.
Let $M_i=L_i\cap R_i$.
Note that $M_i$ is a direct factor of $L_i$ and of $R_i$ (and hence of $G(u_i)$), and $\langle L_i,R_i\rangle$ is also a direct factor of $G(u_i)$.
Therefore, using Smith normal forms for finitely generated abelian groups, we can find a base of $G(u_i)$
$$\mu^{(i)}_1,\dots,\mu^{(i)}_{m_i}, \lambda^{(i)}_1,\dots,\lambda^{(i)}_{l_i}, \rho^{(i)}_1,\dots,\rho^{(i)}_{r_1}, \delta^{(i)}_1,\dots,\delta^{(i)}_{d_i}$$
such that $M_i=\langle \mu^{(i)}_1,\dots,\mu^{(i)}_{m_i}\rangle$, $L_i= \langle M_i, \lambda^{(i)}_1,\dots,\lambda^{(i)}_{l_i}\rangle$ and $R_i =   \langle M_i, \rho^{(i)}_1,\dots,\rho^{(i)}_{r_i}\rangle$.
Using these generating sets for each $G(u_i)$ we see that $G'$ is a RAAG on a graph with
$$\sum_{i=0}^n \mathrm{rank}(G(u_i)) - \sum_{i=0}^{n-1}\mathrm{rank}(G(e_i))$$
vertices.

Now we want to show that $G'$ quotient by the normal closure of $R_0\cup L_n$ has infinite abelianization.
Note $R_0$ has basis $\mu^{(0)}_1,\dots, \mu^{(0)}_{m_0},\rho^{(0)}_1,\dots, \rho^{(0)}_{r_0}$ and $L_n$ has basis 
$\mu^{(n)}_1,\dots, \mu^{(n)}_{m_n},\lambda^{(n)}_1,\dots, \lambda^{(n)}_{l_n}$.
Note that $t_{e_n}$  conjugates each member $x$ of the basis of $R_0$ to a member $y$ of the basis of $L_n$.
Therefore, by (iii), if $x$ and $y$ commute, then $x=y$.

To simplify the notation, note that since $G'$ is a RAAG and $\langle G(u_0), G(u_{n})\rangle$ is a parabolic subgroup that contains $R_0\cup L_n$, it is enough to show that if we quotient  $\langle G(u_0), G(u_{n})\rangle$ by the normal closure of $R_0\cup L_n$, the the resulting group  has infinite abelianization.

The situation is as follows. 
The group  $\langle G(u_0), G(u_{n})\rangle$ is the amalgamated product of two finitely generated abelian groups, amalgamated by a free basis.
So we can assume that 
\begin{equation}\label{eq: presentation}
\langle G(u_0), G(u_{n})\rangle= \langle a_1,\dots, a_i, b_1\dots, b_j, c_1,\dots, c_k \,\mid \, c_p \text{ is central},\, [a_p,a_q]=1,\, [b_p,b_q]=1 \, \forall p,q\rangle.
\end{equation}
The  conjugation $t_e$ of members of the basis of $R_0$ to members of the basis of $L_n$ translates to a bijection between generators of the presentation above.
More precisely, let $C=\{c_1,\dots, c_k\}$.
There are proper subsets $A_0\subset \{a_1,\dots, a_i\}\cup C$ and $B_0\subset \{b_1,\dots, b_j\}\cup C$ and a bijection $\pi\colon A_0\to B_0$ such that if $x$ and $\pi(x)$ commute, then $x=\pi(x)$.  
Therefore $\pi(c_s)=c_s$ for all $c_s\in A_0\cap C$ and $\pi(a_s)\in \{b_1,\dots,b_j\}$ for all $a_s\in A_0-C$.
Note that in particular $A_0\cap C= B_0\cap C$.

Let us quotient first $\langle G(u_0), G(u_{n})\rangle$ by then normal subgroup generated by $A_0\cap C$. 
Note that we get a presentation as above, only that we reduce the number of central generators.
Note that still $|A_0- A_0\cap C|< i+k- |A_0\cap C|$ and $|B_0- A_0\cap C|< j+k- |A_0\cap C|$.
So we abuse the notation and we still denote by $A_0$ the set $A_0 -(A_0\cap C)$ and the same for $B_0$, and we still use the same notation as above for $\langle G(u_0), G(u_{n})\rangle/\langle\langle A_0\cap C\rangle\rangle$.
That is, we are assuming we have the presentation \eqref{eq: presentation} with $A_0\subseteq \{a_1,\dots, a_i\}, B_0\subseteq\{b_1,\dots, b_k\}$, generate proper factor of $\langle a_1,\dots,a_i, c_1,\dots, c_k\rangle$ and $\langle b_1,\dots, b_j, c_1,\dots c_k\rangle$ respectively.
This means that $|A_0|< i$ if $k=0$ or $|A_0|\leq i$ if $k>0$, and similarly for $|B_0|$. 
The group $\langle G(u_0), G(u_{n})\rangle/\langle\langle A_0\cup B_0\rangle\rangle$ will have infinite abalianization if  $|A_0\cup B_0|<i+j+k$.
But this holds since if $k=0$, $|A_0\cup B_0|=|A_0|+|B_0|<i+j=i+j+k$ and if $k>0$ then $|A_0\cup B_0|=|A_0|+|B_0|\leq i+j<i+j+k$.
Therefore $G$ maps onto $\mathbb{F}_2$.
\end{proof}

\begin{proof}[Proof of Theorem \ref{thm:criterium}]
Let $G$ act on a tree $T$ as in the hypothesis of the theorem.
As $G$ is non-solvable, it can not have a global fixed point (as it would mean that $G$ is abelian).
By replacing $T$ by a minimal $G$-subtree, we meet condition (i) of Proposition \ref{prop: map to F_2 except from circle}. 
Note that conditions (ii) and (iii) of Proposition \ref{prop: map to F_2 except from circle} follow from the hypothesis, and hence if $G\backslash T$ is not homeomorphic to a circle, it maps onto $\mathbb{F}_2$.
In the case $G\backslash T$ is homeomorphic to a circle, we apply Proposition \ref{prop: map to F_2 circle}.
\end{proof}

\begin{theorem}\label{thm: large}
Let $\Gamma$ be a finite EAFC system.
Let $G_0$ be an appropriate subgroup for $G=G_\Gamma$.
For every  subgroup $H$ of $G$ that is not virtually abelian, $H\cap G_0$ maps onto $\mathbb{F}_2$.
\end{theorem}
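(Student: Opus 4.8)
The plan is to set $H_0 = H\cap G_0$ and argue by induction on $|V|$, with the goal of either reducing to a standard parabolic of smaller rank or verifying the hypotheses of the criterium Theorem~\ref{thm:criterium}. First I would record the basic reduction: since $[G:G_0]<\infty$, the subgroup $H_0$ has finite index in $H$ and is therefore not virtually abelian; by Theorem~\ref{thm: v abelian subgroups} every subgroup of $G$ avoiding $\mathbb{F}_2$ is virtually abelian, so $H_0$ must contain a non-abelian free subgroup and in particular is not solvable. This is precisely the non-solvability input that Theorem~\ref{thm:criterium} requires, so the entire problem is to furnish $H_0$ with a suitable tree action or to retract onto something smaller.

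The engine of all reductions is the family of retractions $\rho_S\colon G\to G_S$. The key preliminary step I would carry out is to check that $\rho_S(G_0)=G_0\cap G_S$, so that $\rho_S$ restricts to a retraction of $G_0$ onto the appropriate subgroup $G_0\cap G_S$ of $G_S$ (appropriate by Remark~\ref{rem: G_0 standard}); since $\rho_S$ is a surjective quotient map, any homomorphism of $\rho_S(H_0)$ onto $\mathbb{F}_2$ pulls back to $H_0$. With this the base case $|V|\le 2$ is immediate from the definition of appropriate subgroup, as the whole group is a two-generated parabolic, forcing $H_0$ to be contained in $\mathbb{Z}^2$ (excluded) or to map onto $\mathbb{F}_2$. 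If $\Gamma$ is reducible, say $G=G_A\times G_B$, then at least one of $\rho_A(H_0)\le G_0\cap G_A$ or $\rho_B(H_0)\le G_0\cap G_B$ is still not virtually abelian, and induction together with the quotient map finishes this case.

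For irreducible $\Gamma$ with $|V|>2$ I would pick $v$ with $\Link(v)\ne V\setminus\{v\}$, use the amalgam $G=G_A *_{G_C} G_B$ with $A=\Star(v)$, $B=V\setminus\{v\}$, $C=\Link(v)$, and let $H_0$ act on its Bass--Serre tree $T$. If $H_0$ fixes a vertex it lies in a conjugate of $G_A$ or $G_B$, and conjugating (using normality of $G_0$) lands it inside $G_0\cap G_A$ or $G_0\cap G_B$, so induction applies. If $H_0$ has no global fixed point I examine the vertex stabilizers $H_0\cap(\text{parabolic})$: if any one is not virtually abelian, applying $\rho_A$ or $\rho_B$ to the corresponding conjugate of $H_0$ produces a non-virtually-abelian quotient sitting inside a smaller appropriate subgroup, and induction plus the quotient map again gives $H_0\twoheadrightarrow\mathbb{F}_2$. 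In the remaining subcase every vertex stabilizer is a subgroup of $G_0$ avoiding $\mathbb{F}_2$, hence free abelian of rank at most $|V|$ by Theorem~\ref{thm: v abelian subgroups}; so the stabilizers are abelian of uniformly bounded rank.

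The hard part will be the last subcase, namely checking the structural hypotheses of Theorem~\ref{thm:criterium} for this action. Condition (iii) is inherited from $G_0\supseteq H_0$ through Lemma~\ref{lem: equation in G_0}. The delicate point is that edge stabilizers must be direct factors of vertex stabilizers: as all these groups are free abelian, it is enough to show each edge stabilizer $E=H_0\cap Q_e$ (with $Q_e$ a conjugate of $G_C$) is isolated in an adjacent vertex stabilizer $W$. This is exactly where root-closure enters: if $w\in W$ and $w^n\in E\le Q_e$, then Theorem~\ref{thm: main roots} forces $w\in Q_e$, whence $w\in H_0\cap Q_e=E$. Once this is in place, Theorem~\ref{thm:criterium} (which internally passes to a minimal subtree and treats the case $G\backslash T\cong\mathbb{S}^1$ separately via Proposition~\ref{prop: map to F_2 circle}) yields a surjection $H_0\twoheadrightarrow\mathbb{F}_2$, closing the induction.
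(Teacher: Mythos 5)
Your proposal follows essentially the same route as the paper: induction on $|V|$, reduction via the retractions $\rho_A$ to the case where all intersections with proper parabolics are free abelian of bounded rank, the amalgam over $\Star(v)$ and its Bass--Serre tree, root-closure (Theorem \ref{thm: main roots}) to promote edge stabilizers to direct factors of vertex stabilizers, Lemma \ref{lem: equation in G_0} for condition (iii), and finally Theorem \ref{thm:criterium}. The one point deserving care is your preliminary claim $\rho_S(G_0)=G_0\cap G_S$: this can fail for an arbitrary appropriate subgroup (the definition only constrains intersections with $2$-generated parabolics), but it does hold for the $G_0$ constructed in Lemma \ref{lem: G_0}, and the paper's own inductive step tacitly relies on the same containment.
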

\begin{proof}
We argue by induction on $|V|$. 
Let $H$ be a
subgroup of $G_\Gamma$ that is not virtually abelian, and hence, by Theorem \ref{thm: v abelian subgroups} contains a non-abelian free group. 
 In particular, we can assume that $|V|\geq 2$.

If $|V|=2$, we can further assume that $G_{\Gamma}$ is non-abelian. 
If $G_\Gamma$ is free, then so is $H$.
If $G_\Gamma$ is not free, then $G_\Gamma\cong D_{2n}$ with $n>1$ and $G_0\cong \mathbb{F}_n\times \Z$ and hence we can take $H_0=G_0\cap H$ which maps onto $\mathbb{F}_2$.

So assume that $|V|>2$ and that the result holds for EAFC groups based on graphs with fewer vertices.

Suppose that there is proper subset  $A\subseteq V$ such that $\rho_A (H)$ contains a non-abelian free subgroup.
Note that $\rho_A(H\cap G_0)$ has finite index in $\rho_A(H)$ and hence it also contains a non-abelian free subgroup.
As $\rho_A(H \cap G_0)$ is appropiate in $G_A$, it maps onto $\mathbb{F}_2$.
Thus $H\cap G_0$  maps onto $\mathbb{F}_2$.

So, we can assume that for all proper subset $A\subseteq V$, $\rho_A(H)$ does not contain a non-abelian free group.
Thus, we can assume that $(G_0\cap H)\cap G_A\leqslant \rho_A(H)$ does not contain a non-abelian free subgroup.
Therefore, by Theorem \ref{thm: v abelian subgroups}, for all $A\subseteq V$, we can assume that $(G_0\cap H)\cap G_A$ is isomorphic to $\Z^s$ with $s\leq |A|$.

By replacing $H$ with $H\cap G_0$, we will assume that $H$ contains a non-abelian free group and $H\cap gG_Ag^{-1}$ is free abelian on rank $\leq |A|$ for all proper subset $A$ of $V$.

If $\Gamma$ is reducible, there are $A,B\subseteq V$ such that $V=A\cup B$, $A\cap B=\emptyset$,  and $G_\Gamma = G_A\times G_B$. 
Then $H\leqslant\rho_A(H)\times \rho_B(H)\leq \Z^{|A|}\times \Z^{|B|}$ and does not contain a non-abelian free subgroup. 
A contradiction. 
Therefore, we further assume that $\Gamma$ is irreducible.

Let $v\in V\Gamma$ such that $\Link(v)\neq \Delta = \Gamma \setminus \{v\}$. 
We have a splitting $$G_{\Gamma} = G_{\Star(v)} *_{G_{\Link(v)}} G_{\Delta},$$
and let $T$ be the associated Bass-Serre tree. 
Note that $T$ is countable as $G$ is finitely generated.
Now $H$ also acts on $T$ as a subgroup of $G_{\Gamma}$. 
Notice that since $H$ is not contained in a proper parabolic subgroup, $H$ does not fix a vertex.


{\bf Claim.} Let $g\in G$,  $K = H \cap gG_{\Delta}g^{-1}$ or $K= H\cap gG_{\Star(v)}g^{-1}$, and $L = H \cap gG_{\Link(v)}g^{-1}$. 
Then $L$ is closed under taking roots in $K$. 
In particular, $L$ is a direct factor in $K$.

{\it Proof of the Claim.} 
Let $k\in K$ and $n_k>1$ such that $k^{n_k}\in L\subseteq gG_{\Link(v)}g^{-1}$, hence $k \in gG_{\Link(v)}g^{-1}$ because of the closure of roots (Theorem \ref{thm: main roots}). 
Note that $k\in K\subseteq H$, thus $k\in H\cap gG_{\Link(v)}g^{-1}=L$.


By the claim, we have that $H$ acts on $T$ with finitely generated abelian stabilizers of rank $\leq |V|$ and edge stabilizers are direct factors on the vertex stabilizers. 
By Lemma \ref{lem: equation in G_0}, for every $x,y,z\in H$, if $xy=yx$ and $zxz^{-1}=y$ then $y=z$.
As $H$ contains a non-abelian free group, it is not solvable. 
Therefore, by Theorem \ref{thm:criterium}, $H$ maps onto a non-abelian free group.
\end{proof}

\begin{proof}[Proof of Theorem \ref{thm: main}]
Take $G_0$ to be the appropriate subgroup of Lemma \ref{lem: G_0}. It has the desired bound on the index.
Now the theorem follows from Theorem \ref{thm: v abelian subgroups} and Theorem \ref{thm: large}.
\end{proof}

\begin{remark}\label{rem: difference with AM}
The first part of our proof, where we classify the subgroups not containing free groups is very similar to the analogous one in the paper \cite{antolin2015tits}.
However, for the second part, where we prove largeness, the approach is essentially different.

In \cite{antolin2015tits},  the fact that the kernel of any canonical retraction onto 1-generated standard parabolic subgroup is again a RAAG  is used to reduce to the case of subgroups of RAAGs that have non-trivial images under any retractions.
In this situation, and similar to our proof, the main case is when one has a subgroup $H$ of a RAAG $G_\Gamma$ over an irreducible graph $\Gamma$ such that the retraction of $H$ over any 1-generated standard parabolic is non-trivial, and the retraction over any proper parabolic is abelian.
Decompose $G_\Gamma$ as $G_A*_{G_C} G_B$ with $A$ again irreducible.
An abelian subgroup of $H\cap gG_Ag^{-1}$ such that has non-trivial retractions onto 1-generated parabolics of $gG_Ag^{-1}$ can not be contained in a proper parabolic subgroup, and hence it must be cyclic  (as in the proof of Theorem \ref{thm: v abelian subgroups}) and this implies that $H\cap gG_Cg^{-1}$ is trivial and $H$ is a free product of indicable groups.

The analogous key statement (that kernels of retractions onto vertices of RAAGs are  RAAGs again) is not true for EAFC groups. 
Since we did not have that fact, we had to do a finer analysis on the types of graphs of groups obtained from the action of subgroups on splittings of our EAFC; for this, the closure of parabolic subgroups under taking roots and the equations of Lemma \ref{lem: equation in G_0} are fundamental.
These two facts are not used in \cite{antolin2015tits}.
Note that the graphs of groups obtained are not free products as in the RAAG case. 
This is not surprising since this is already happens in the two generator EAFC case, which is the base of induction, and we have subgroups of the form $\Z\times \mathbb{F}_n$ which can be split as a groups of with $\mathbb{Z}^2$ vertex groups and $\mathbb{Z}$ edge groups.
\end{remark}

\section{Coherence}
\label{sec:coherence}
In this section we give an explicit characterization of coherence of EAFC groups that emphasizes its similarities with the class of RAAGs.
Recall that a group $G$ is called {\it coherent}, if any finitely generated subgroup $H$ of $G$, is finitely presented.

A (simplicial) graph $\Gamma =(V,E)$ is said to be {\it chordal}  if for every $S\subset V$ such that the induced subgraph $\Gamma_S$ is a cycle (i.e. homeomorphic to $\mathbb{S}^1$) one has that $|S|=3$.
In other words  all cycles of four or more vertices have a {\it chord}, which is an edge that is not part of the cycle but connects two vertices of the cycle.

Droms provided the following simple characterization of coherent RAAGs

\begin{theorem}[Theorem 1 in \cite{droms1987graph}]
Let $G$ be a RAAG based on $\Gamma$. $G$ is coherent if and only if $\Gamma$ is chordal.
\end{theorem}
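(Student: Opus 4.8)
The plan is to prove both implications, using throughout two elementary facts: coherence is inherited by subgroups (a finitely generated subgroup of a subgroup is a finitely generated subgroup of the whole group), and for a RAAG every induced subgraph $\Delta\subseteq\Gamma$ yields a retraction $\rho_{V\Delta}\colon G_\Gamma\to G_\Delta$, so $G_\Delta$ embeds as a subgroup of $G_\Gamma$. Consequently, to violate coherence it suffices to exhibit a single induced subgraph whose RAAG is incoherent, and to establish coherence it suffices to control the effect of adding one vertex.

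For the forward direction I would argue the contrapositive: if $\Gamma$ is not chordal, pick $S\subseteq V$ with $\Gamma_S$ an induced cycle $C_n$, $n\geq 4$, so that $G_{C_n}\leqslant G_\Gamma$, and it is enough to see that $G_{C_n}$ is incoherent. The clean tool here is the Bestvina--Brady construction: the kernel $BB$ of the homomorphism $G_{C_n}\to\Z$ sending every vertex to $1$ has its finiteness properties governed by the flag (clique) complex of $C_n$. Since $C_n$ with $n\geq 4$ contains no triangle, this complex is the cycle itself, homeomorphic to $\mathbb{S}^1$; it is connected but not simply connected, so $BB$ is finitely generated but not finitely presented. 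Hence $G_{C_n}$, and therefore $G_\Gamma$, is incoherent. As a reassuring special case, $G_{C_4}\cong \mathbb{F}_2\times\mathbb{F}_2$ and $BB$ is then the classical Stallings--Bieri group.

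For the reverse direction I would induct on $|V|$, the case $|V|=1$ giving $G_\Gamma\cong\Z$. If $\Gamma$ is complete then $G_\Gamma\cong\Z^{|V|}$ is polycyclic, hence coherent. Otherwise, by Dirac's theorem a chordal graph has a simplicial vertex $v$, i.e. $\Link(v)$ is a clique, so $G_{\Link(v)}\cong\Z^{k}$ is free abelian. Because $v$ commutes with all of $\Link(v)$ and with nothing else, separating the generator $v$ from the presentation yields
$$G_\Gamma \;=\; \bigl\langle\, G_{\Gamma - v},\,v \,\mid\, [v,u]=1 \ \text{for all } u\in\Link(v)\,\bigr\rangle,$$
an HNN extension of $G_{\Gamma - v}$ with stable letter $v$, both associated subgroups equal to $G_{\Link(v)}\cong\Z^{k}$, and identity conjugation. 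Since $\Gamma - v$ is again chordal, $G_{\Gamma - v}$ is coherent by induction, and it remains to show that this HNN extension over a free abelian edge group preserves coherence.

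The heart of the matter --- and the step I expect to be the main obstacle --- is therefore the combination statement: the fundamental group of a finite graph of coherent groups with polycyclic edge groups is coherent. I would reconstruct it via Bass--Serre theory. Given a finitely generated $H\leqslant G_\Gamma$, let it act on the Bass--Serre tree of the splitting; passing to a minimal $H$-invariant subtree (or a fixed vertex), the quotient graph of groups is finite because $H$ is finitely generated. Its edge groups are the intersections $H\cap g\Z^{k}g^{-1}$, which are polycyclic, and its vertex groups are the intersections $H\cap gG_{\Gamma - v}g^{-1}$. The crucial point is that \emph{polycyclicity of the edge groups forces the vertex groups of this induced decomposition to be finitely generated}: this is exactly where one needs the edge groups to be polycyclic (equivalently Noetherian, slender) and not merely finitely generated, since without this the conclusion genuinely fails, as the Stallings--Bieri phenomenon inside $\mathbb{F}_2\times\mathbb{F}_2$ shows. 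Granting finite generation, each vertex group is a finitely generated subgroup of the coherent group $G_{\Gamma - v}$, hence finitely presented; the edge groups are finitely generated; and the fundamental group of a finite graph of finitely presented groups with finitely generated edge groups is finitely presented. Thus $H$ is finitely presented, which completes the induction and hence the proof.
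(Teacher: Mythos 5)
Your proposal is correct, but it follows a genuinely different route from the one the paper takes (the paper states Droms' theorem as a citation, and its Section~\ref{sec:coherence} proof of Theorem~\ref{thm: coherence in EAFC} specializes to the right-angled case, so that is the natural comparison). For necessity, both arguments use the same witness subgroup, the kernel $K_C$ of the exponent-sum map $G_C\to\Z$ for an induced cycle $C$ of length at least $4$; but where you invoke Bestvina--Brady Morse theory (flag complex $\simeq\mathbb{S}^1$, so $K_C$ is finitely generated and not finitely presented), the paper argues elementarily: it writes $C=T_1\cup T_2$ as a union of two trees meeting in two non-adjacent vertices $\{x,y\}$, gets $K_C=K_{T_1}*_{K_{\{x,y\}}}K_{T_2}$ with $K_{T_i}$ finitely generated free and $K_{\{x,y\}}$ the (non-finitely-generated) normal closure of $xy^{-1}$ in $\mathbb{F}_2$, and concludes via Baumslag's observation \cite{baumslag1962remark} that an amalgam of finitely generated groups over a non-finitely-generated group is not finitely presented. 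Your Morse-theoretic shortcut is a heavier imported theorem (and anachronistic relative to Droms), but it is valid and handles all cycle lengths uniformly; the paper's splitting argument is self-contained and is the version that transfers to the even, non-right-angled setting. For sufficiency, the paper uses the clique-separator characterization of chordality, splits $G_\Gamma=G_{\Gamma_1}*_{G_A}G_{\Gamma_2}$ over an abelian $G_A$, and simply cites Karrass--Solitar (Proposition~\ref{prop: facts coherent}(2)); you instead use Dirac's simplicial-vertex characterization to get an HNN extension over $G_{\Link(v)}\cong\Z^k$ and then reconstruct the combination theorem via Bass--Serre theory. Your reconstruction is sound: the one step you state without proof --- that a finitely generated group splitting as a finite graph of groups with finitely generated (here slender) edge groups has finitely generated vertex groups --- is a standard lemma (provable by the normal-form argument that the sub-graph-of-groups generated by the syllables of a finite generating set together with the edge groups already fills out the vertex groups), so this is a citable fact rather than a gap; just note that since your splitting is an HNN extension, a bare citation would need the HNN analogue of Karrass--Solitar's amalgam theorem, which your Bass--Serre argument conveniently supplies in both cases at once. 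What each approach buys: the paper's decomposition over a complete separator is the one that generalizes to EAFC systems (where one must additionally check the separator subgroup is abelian), while your simplicial-vertex HNN route is shorter for RAAGs proper and makes the role of slenderness of $\Z^k$ explicit rather than hidden inside a citation.
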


Let $\Gamma=((V,E),m)$ be an EAFC system and $\Gamma^{\leq 2}=((V,E'), m|_{E'})$ where $E'=m^{-1}(\{2\})$. 
That is $\Gamma^{\leq 2}$ is obtained from $\Gamma$ by removing all the edges with label $> 2$.

We have the following 

\begin{theorem}[Theorem \ref{thm: coherence in EAFC}]
Let $\Gamma$ be an EAFC system. 
Then $G_\Gamma$ is coherent if and only if both $\Gamma$ and $\Gamma^{\leq 2}$ are chordal.
\end{theorem}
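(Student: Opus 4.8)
The plan is to read the two chordality hypotheses as governing two independent sources of incoherence, proving necessity by exhibiting incoherent parabolic subgroups and sufficiency by an induction that reduces $G_\Gamma$ to its clique subgroups. For necessity I would use that coherence is inherited by subgroups, so it suffices to find an incoherent standard parabolic $G_S$. If $\Gamma$ is not chordal it contains a chordless induced cycle on $n\ge 4$ vertices, and the corresponding parabolic $G_S$ is the even Artin group on that $n$-cycle. Sending every standard generator to $1$ defines a homomorphism onto $\Z$ (each even relation $\mathrm{prod}(u,v,2k)=\mathrm{prod}(v,u,2k)$ is balanced in $u,v$), and I would argue that its Artin kernel is finitely generated but not finitely presented, a Bieri--Stallings/Bestvina--Brady phenomenon driven by the defining cycle being connected but not simply connected; the required finiteness computation for Artin kernels is available in the literature. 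This already explains why one cannot merely reduce to Droms: when the cycle carries labels $>2$ the group $G_S$ need not be (virtually) a RAAG, as the all-$4$ labelled $4$-cycle shows.

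If instead $\Gamma$ is chordal but $\Gamma^{\le 2}$ is not, I would use the complementary mechanism. A chordless induced cycle of $\Gamma^{\le 2}$ becomes, inside the chordal graph $\Gamma$, a triangulated polygon in which every added chord carries a label $>2$. Choosing such a high-label chord $\{u,v\}$ together with two cycle vertices each joined to both $u$ and $v$ by label-$2$ edges and mutually non-adjacent, one locates inside $G_S$ a subgroup isomorphic to $D_{2k}\times \mathbb{F}_2$, which contains $\mathbb{F}_2\times\mathbb{F}_2$ (using $\mathbb{F}_k\le D_{2k}$ from Lemma \ref{lem: D2n is Fn by Z}) and is therefore incoherent. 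Carrying out this selection for an arbitrary triangulation, rather than for the square, is the bookkeeping cost of this case.

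For sufficiency I would induct on $|V|$, reducing disconnected $\Gamma$ to a free product over its components (free products of coherent groups are coherent) and otherwise extracting from chordality of $\Gamma$ a simplicial vertex $v$, so that
$$G_\Gamma = G_{\Gamma \setminus \{v\}} *_{G_{\Link(v)}} G_{\Star(v)}.$$
The decisive structural point is that, because $\Gamma^{\le 2}$ is chordal, every clique of $\Gamma$ contains at most one edge of label $>2$: such edges form a matching by the EAFC condition, and two disjoint ones would exhibit a complete graph on four vertices minus two disjoint edges, i.e. an induced $4$-cycle, inside $\Gamma^{\le 2}$. Hence by Lemma \ref{lem: EAFC over complete graphs} the clique groups $G_{\Star(v)}$ and $G_{\Link(v)}$ are of the form $D_{2k}\times\Z^m$ or $\Z^m$; by Lemma \ref{lem: D2n is Fntimes Z by Cn} these are commensurable to the RAAGs $\mathbb{F}_k\times\Z^m$ on chordal graphs, hence coherent, while $G_{\Gamma\setminus\{v\}}$ is coherent by induction, since its graph and its $\le 2$-subgraph are induced subgraphs of chordal graphs and so chordal.

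It remains to assemble coherence of $G_\Gamma$ from this amalgam, and this is where I expect the real difficulty. In Droms' RAAG setting the edge group $G_{\Link(v)}$ is free abelian, but here it can be virtually $\mathbb{F}_k\times\Z^m$, which is neither slender nor polycyclic, so the classical amalgamation-coherence criteria do not apply verbatim. I would try either to pass, compatibly across the entire induction, to a finite-index subgroup in which every dihedral factor is replaced by $\mathbb{F}_k\times\Z$ and the splitting is reorganised into a graph of coherent RAAG-parabolics glued along special RAAG subgroups, or to invoke a coherence theorem for groups acting on trees with coherent vertex stabilizers and edge stabilizers of this controlled $\mathbb{F}_k\times\Z^m$ type. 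Verifying that the relevant gluings stay within the coherent class, uniformly through the induction, is the crux of this direction.
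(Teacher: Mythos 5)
Your necessity argument is essentially the paper's. In the first case the paper also works with the kernel $K_C$ of the map $G_C\to\Z$ over a chordless induced cycle $C$ of $\Gamma$, but makes the ``finitely generated, not finitely presented'' claim self-contained: it splits $C$ as a union of two trees $T_1,T_2$ meeting in two non-adjacent vertices $x,y$, obtains $K_C=K_{T_1}*_{K_{\{x,y\}}}K_{T_2}$ with finitely generated free vertex groups and infinitely generated edge group (the normal closure of $xy^{-1}$ in $\mathbb{F}_2$), and quotes Baumslag; you defer exactly this computation to the literature. In the second case, note that the EAFC triangle condition forces the offending induced cycle of $\Gamma^{\leq 2}$ to have length exactly $4$, so no general triangulated polygon ever arises; note also that your two ``apex'' vertices may be joined in $\Gamma$ by a label-$>2$ edge, so the configuration ``mutually non-adjacent'' you ask for need not exist --- but then they generate $D_{2t}\supseteq\mathbb{F}_2$ instead of $\mathbb{F}_2$, and the subgroup $\mathbb{F}_2\times\mathbb{F}_2$ is still present. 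These are the paper's two cases $D_{2s}\times D_{2t}$ and $(\langle a\rangle\times D_{2n})*_{D_{2n}}(\langle a\rangle\times D_{2n})$.

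The genuine gap is in sufficiency, precisely where you locate it, and it is caused by your choice of splitting. Amalgamating over the link of a simplicial vertex really does produce non-Noetherian edge groups under the stated hypotheses: take $V=\{v,a,b,w\}$ with edges $\{v,a\},\{v,b\},\{a,b\},\{a,w\}$, label $\{a,b\}$ by $4$ and the rest by $2$; then $\Gamma$ and $\Gamma^{\leq 2}$ are both chordal, $v$ is simplicial, and the edge group $G_{\Link(v)}=G_{\{a,b\}}\cong D_8$ contains $\mathbb{F}_2$, so the Karrass--Solitar criterion fails for that splitting and neither of your proposed rescues is carried out. The paper avoids the problem by decomposing over a complete separator instead: write $\Gamma=\Gamma_1\cup\Gamma_2$ with $A=\Gamma_1\cap\Gamma_2$ complete and both $\Gamma_i$ proper, and observe that chordality of $\Gamma^{\leq 2}$ forces $G_A$ to be \emph{abelian}. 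Indeed, if $\{a,b\}\subseteq A$ had label $>2$, one finds $x\in\Gamma_1- A$ and $y\in\Gamma_2- A$ each adjacent to both $a$ and $b$ (necessarily by label-$2$ edges, by the EAFC triangle condition) and non-adjacent to each other, producing an induced square in $\Gamma^{\leq 2}$, a contradiction. With $G_A\cong\Z^m$ all subgroups of the amalgamating group are finitely generated, Karrass--Solitar applies verbatim, and the induction closes (the base case of a complete graph is a dihedral group times $\Z^m$, as you say). So the second chordality hypothesis is not just the source of the second obstruction in the necessity direction; it is exactly what tames the amalgamating subgroups in the sufficiency direction, and the difficulty you flag as the crux disappears once the separator is chosen to be a clique cut-set rather than the link of a simplicial vertex.
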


This theorem can be deduced from the classification of coherent Artin groups, which was settled in  \cite{Wise2013} after the reduction in \cite{gordon2004artin}, and it stresses the similarities between EAFC groups and RAAGs. 
Theorem \ref{thm: coherence in EAFC} admits a proof that is very similar to the one of Droms for RAAGs.
We provide here a schematic proof.

We will use the following facts.
\begin{prop}\label{prop: facts coherent}
Let $A,B$ be group and $C$ a common subgroup.
Let $G=A*_CB$. 
Then
\begin{enumerate}
\item[{\rm (1)}] If $A$ and  $B$ are finitely generated, but $C$ is not, then $G$ is not finitely presented.
\item[{\rm (2)}] If $A$ and $B$ are coherent and all subgroups of $C$ are finitely generated, then $G$ is coherent.
\end{enumerate}
\end{prop}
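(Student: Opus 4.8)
The plan is to prove the two parts separately; both reduce to standard Bass--Serre theory and elementary combinatorial group theory, so I would give short self-contained arguments rather than invoke the classification of coherent Artin groups.

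For (1) I would argue by contradiction, assuming $G = A *_C B$ is finitely presented. Fix finite generating sets $Y_A$ of $A$ and $Y_B$ of $B$; let $R_A$ (resp.\ $R_B$) be the set of \emph{all} words on $Y_A^{\pm1}$ (resp.\ $Y_B^{\pm1}$) that are trivial in $A$ (resp.\ in $B$), and for each $c\in C$ put $w_c=u_cv_c^{-1}$, where $u_c$ and $v_c$ are words on $Y_A$ and $Y_B$ representing $c$ in $A$ and in $B$. Then $G=\langle\, Y_A\sqcup Y_B \mid R_A\cup R_B\cup\{w_c:c\in C\}\,\rangle$ is a presentation on the \emph{finite} generating set $Y_A\sqcup Y_B$. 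The key input is the classical fact (B.~H.\ Neumann) that finite presentability is independent of the chosen finite generating set: since $G$ is finitely presented, some \emph{finite} subset $R_0$ of the relators above already presents $G$.

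Now $R_0$ contains only finitely many relators of type $w_c$, say for $c$ in a finite set $\{c_1,\dots,c_m\}$; set $C_0=\langle c_1,\dots,c_m\rangle\le C$. I would then show $G\cong A*_{C_0}B$ compatibly with the inclusions of $A$ and $B$. The natural map $A*_{C_0}B\to G$ is surjective, and conversely sending $Y_A,Y_B$ into $A*_{C_0}B$ in the obvious way respects every relator of $R_0$: those coming from $R_A,R_B$ hold because $A$ and $B$ embed in $A*_{C_0}B$, and each $w_{c_i}$ holds because $c_i\in C_0$. These two maps are mutually inverse (they are identities on the generators), so $G\cong A*_{C_0}B$. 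Since in any amalgam $A*_DB$ the two factors meet exactly in $D$, comparing $A\cap B$ on the two sides gives $C=C_0$, which is finitely generated --- contradicting the hypothesis. Hence $G$ is not finitely presented.

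For (2), let $H\le G$ be finitely generated; I must show $H$ is finitely presented. Consider the action of $H$ on the Bass--Serre tree $T$ of $G=A*_CB$. Because $H$ is finitely generated there is an $H$-invariant subtree with finite quotient graph (\cite{DicksDunwoody}), so $H$ is the fundamental group of a \emph{finite} graph of groups whose vertex groups have the form $H\cap gAg^{-1}$ or $H\cap gBg^{-1}$ and whose edge groups have the form $H\cap gCg^{-1}$. The edge groups are subgroups of conjugates of $C$, hence finitely generated by the hypothesis that every subgroup of $C$ is finitely generated. From finite generation of $H$ together with finiteness of the quotient graph and of the edge groups, the vertex groups are finitely generated as well; being finitely generated subgroups of the coherent groups $A$ and $B$ (up to conjugacy), they are finitely presented. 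Finally, a finite graph of groups with finitely presented vertex groups and finitely generated edge groups has finitely presented fundamental group, so $H$ is finitely presented and $G$ is coherent.

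The routine points are B.~H.\ Neumann's lemma and checking the two maps in (1) are mutually inverse. I expect the genuine obstacle to be the claim in (2) that the vertex groups $H\cap gAg^{-1}$ are finitely generated: this is the load-bearing step and needs a normal-form argument, writing a prospective vertex-group element in terms of the finite generating set of $H$, reducing to normal form in the graph of groups, and observing that the surviving syllables all lie in the subgroup generated by the finitely many vertex syllables occurring in the chosen generators together with the finitely generated adjacent edge groups. Carrying out this reduction carefully is where I would spend the most effort.
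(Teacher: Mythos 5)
Your proposal is correct, but it takes a different route from the paper only in the sense that the paper does not prove this proposition at all: it disposes of (1) by citing Baumslag's 1962 remark and of (2) by citing Theorem 8 of Karrass--Solitar, whereas you supply self-contained arguments. Your proof of (1) is essentially Baumslag's original one (finite presentability on a fixed finite generating set forces only finitely many of the amalgamating relators $w_c$ to be needed, whence $G\cong A*_{C_0}B$ and $C=A\cap B=C_0$ is finitely generated), and it is complete as written. Your proof of (2) is the Bass--Serre--theoretic version of the Karrass--Solitar argument; the one step you correctly flag as load-bearing --- that a finitely generated group acting cocompactly on a tree with finitely generated edge stabilizers has finitely generated vertex stabilizers --- is a genuine standard lemma (it is exactly what the Karrass--Solitar subgroup structure theorem, or the minimal-subtree argument you sketch, delivers), so nothing is missing mathematically, though in a written version you would either carry out that normal-form reduction or cite it. The trade-off is the usual one: the paper's citation is shorter and attributes the results properly, while your version makes the proposition independent of the literature on subgroups of amalgams; if you keep your version, I would still add the two references, since part (2) in particular is a nontrivial theorem and readers will want to know it is classical.
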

\begin{proof}
(1) is the content of \cite{baumslag1962remark}. (2) is Theorem 8 of \cite{karrass1970subgroups}. 
\end{proof}

\begin{mydef}
Let $\Gamma$ be an Artin system and $G_\Gamma$ the associated group. The map $\phi  \colon G_{\Gamma} \rightarrow \Z, \; v \mapsto 1$ for all $v \in V\Gamma$ defines a morphism, as it respects the relations of $G_{\Gamma}$. 
We define $K_{\Gamma}$ to be the kernel of $\phi$. 
\end{mydef}

%
The follwing is essentially in Droms \cite{droms1987graph}, see also \cite[Lemma 4.4]{HermillerMeier1999}.
\begin{prop}
Let $\Gamma$ be an Artin system, and $U, V$ proper induced subgraphs of $\Gamma$ with $\Gamma = U \cup V$. 
Then 
\[
K_{\Gamma} = K_U *_{K_{U \cap V}} K_V.
\]
\end{prop}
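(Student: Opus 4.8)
The plan is to first promote the graph-theoretic decomposition $\Gamma = U \cup V$ to a group-theoretic splitting of $G_\Gamma$, and then push this splitting down to the kernels via Bass--Serre theory. For the first step, note that because $U$ and $V$ are \emph{induced} subgraphs that cover $\Gamma$, every edge of $\Gamma$ lies entirely in $U$ or entirely in $V$; there are no edges joining a vertex lying only in $U$ to a vertex lying only in $V$. Hence the standard presentation of $G_\Gamma$ is the union of the presentations of $G_U$ and $G_V$, and these share exactly the generators and braid relations coming from $U\cap V$. Since $G_{U\cap V}$ embeds into both $G_U$ and $G_V$ (van der Lek's theorem), the combinatorial description of amalgamated products in terms of presentations yields
\[
G_\Gamma = G_U *_{G_{U\cap V}} G_V .
\]

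Next I would let $T$ be the Bass--Serre tree of this splitting, so that $G_\Gamma$ acts on $T$ with two vertex orbits (stabilizers conjugate to $G_U$ and $G_V$) and one edge orbit (stabilizers conjugate to $G_{U\cap V}$), with fundamental domain a single edge. The subgroup $K_\Gamma = \ker\phi$ is normal, so it acts on $T$ and I can read off its splitting from the quotient graph of groups $K_\Gamma\backslash T$. The stabilizers are immediate: for any standard parabolic $G_S$ one has $K_\Gamma \cap G_S = \{g\in G_S : \phi(g)=0\} = K_S$, so the vertex groups of the induced graph of groups are $K_U$ and $K_V$, the edge group is $K_{U\cap V}$, and the edge monomorphisms are the restrictions of the inclusions $G_{U\cap V}\hookrightarrow G_U,\,G_V$.

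The crux is the orbit count, i.e. showing that $K_\Gamma\backslash T$ is again a single segment. Because $K_\Gamma$ is normal with $G_\Gamma/K_\Gamma\cong\Z$ via $\phi$, the set of $K_\Gamma$-orbits on the vertices $G_\Gamma/G_U$ is in bijection with $\Z/\phi(G_U)$, and similarly for $G_V$ and for the edges $G_\Gamma/G_{U\cap V}$. Since $\phi$ sends every vertex generator to $1$, one has $\phi(G_S)=\Z$ for every nonempty $S$; as $U$ and $V$ are both nonempty (each is proper and together they cover $\Gamma$), the two vertex orbits are single, and provided $U\cap V\neq\emptyset$ the edge orbit is single as well. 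Thus $K_\Gamma\backslash T$ is a segment with vertex groups $K_U,K_V$ and edge group $K_{U\cap V}$, and Bass--Serre theory gives $K_\Gamma = K_U *_{K_{U\cap V}} K_V$, as claimed.

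I expect the genuine subtlety to be precisely the nonemptiness of $U\cap V$: if $U\cap V=\emptyset$ then $\phi(G_{U\cap V})=0$, the edge orbit breaks into $\Z$-many orbits, and one instead obtains $K_\Gamma$ as the fundamental group of a graph of groups over an infinite graph (a free product of the $K_S$ with extra free factors), rather than the amalgam written above. In the coherence application $U\cap V$ is a nonempty separating clique, so this case does not arise. Alternatively, one can bypass the orbit count entirely by fixing a vertex $t\in U\cap V$ and observing that $G_S=K_S\rtimes\langle t\rangle$ for each $S\in\{U,V,U\cap V\}$, since $t$ normalizes the kernel $K_S$ and $\phi(t)=1$; reassembling these compatible semidirect products gives $G_\Gamma\cong(K_U *_{K_{U\cap V}} K_V)\rtimes\langle t\rangle$, whose $\phi$-kernel is exactly $K_U *_{K_{U\cap V}} K_V$.
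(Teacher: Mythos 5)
Your proof is correct and follows essentially the same route as the paper: pass to the splitting $G_\Gamma = G_U *_{G_{U\cap V}} G_V$, act on its Bass--Serre tree, identify the stabilizers in $K_\Gamma$ as $K_U$, $K_V$, $K_{U\cap V}$, and show $K_\Gamma$ has one orbit of edges and two of vertices (the paper does this by exhibiting the explicit element $s^{\phi(g)}g^{-1}$ with $s\in G_{U\cap V}$, $\phi(s)=1$, which is the same computation as your $\Z/\phi(G_S)$ orbit count). Your caveat that $U\cap V$ must be nonempty is a genuine hypothesis that the paper's proof also uses implicitly (it picks $s\in G_{U\cap V}$ with $\phi(s)=1$) and which holds in all of the paper's applications.
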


\begin{proof}
As $G_{\Gamma} = G_U *_{G_{U \cap V}} G_V$, the group $G_{\Gamma}$ acts on a Bass-Serre tree $T$, whose vertices are the left cosets of the subgroups $G_U$ and $G_V$ in $G_{\Gamma}$, while the edges are the left cosets of $G_{U \cap V}$ in $G_{\Gamma}$.

The subgroup $K_{\Gamma}$ of $G_{\Gamma}$ acts on $T$ as well; moreover, this action is transitive on edges: 
let $gG_{U \cap V}$ be an edge of $T$, pick $s \in G_{U \cap V}$ such that $\phi(s)=1$.
Then  $s^{\phi(g)}g^{-1} \in K_{\Gamma}$ and one has $$(s^{\phi(g)}g^{-1})(gG_{U \cap V}) = s^{\rho(g)} G_{U \cap V} = G_{U \cap V}.$$
Thus, there is only one orbit of edges under the action of $K_{\Gamma}$. 
The vertices $G_U$ and $G_V$ lie in different orbits of the action of $K_{\Gamma}$ on $T$. 
Finally the vertices stabilizers are of the form $gG_Ug^{-1}\cap K_\Gamma\cong K_U$ or $gG_Vg^{-1}\cap K_\Gamma\cong K_V$ and the edge stabilizers are $gG_{U\cap V}g^{-1}\cap K_\Gamma \cong K_{U\cap V}$.
Ultimately, the quotient $T/K_{\Gamma}$ consists of two vertices joined by an edge, so one has $K_{\Gamma} = K_U *_{K_{U \cap V}} K_V$.
\end{proof}


The following was already observed in \cite[Proposition 4.6]{HermillerMeier1999}.

\begin{prop}\label{thm: graph tree, kernel free}
    If $\Gamma$ is a tree, then $K_{\Gamma}$ is a finitely generated free group.
\end{prop}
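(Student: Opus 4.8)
The plan is to induct on the number of vertices $|V\Gamma|$, using the amalgamation decomposition $K_\Gamma = K_U *_{K_{U\cap V}} K_V$ established in the previous proposition, together with the dihedral computation of Lemma \ref{lem: D2n is Fn by Z} as the substantive base case.

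For the base cases, if $\Gamma$ is a single vertex then $G_\Gamma\cong\Z$ and $\phi$ is an isomorphism, so $K_\Gamma=\{1\}$ is free of rank $0$. If $\Gamma$ is a single edge with label $2n$, then $G_\Gamma\cong D_{2n}$ and $\phi$ is precisely the map $a,b\mapsto 1$ of Lemma \ref{lem: D2n is Fn by Z}, whose kernel is $\mathbb{F}_n$, a finitely generated free group.

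For the inductive step, I would assume $\Gamma$ is a tree with $|V\Gamma|\geq 3$. Being a finite tree, $\Gamma$ has a leaf $w$ with unique neighbour $u$. Let $U$ be the induced subgraph on $\{w,u\}$ (a single edge) and $V=\Gamma\setminus\{w\}$. Then $U$ and $V$ are proper induced subgraphs with $\Gamma=U\cup V$, and because $w$ is a leaf the only edge meeting $w$ is $\{w,u\}\in EU$, so $EU\cup EV=E\Gamma$ and the overlap $U\cap V$ is the single vertex $\{u\}$. Applying the decomposition proposition gives $K_\Gamma=K_U*_{K_{\{u\}}} K_V$. Now $K_{\{u\}}$ is the kernel of the isomorphism $G_{\{u\}}\cong\Z\xrightarrow{\phi}\Z$, hence trivial, so the amalgamation is an honest free product $K_\Gamma\cong K_U * K_V$. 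By the edge base case $K_U$ is finitely generated free, and by the induction hypothesis (as $V$ is a tree with fewer vertices) $K_V$ is finitely generated free; a free product of finitely generated free groups is again finitely generated and free, which completes the induction.

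The only point requiring genuine care is arranging the splitting so that the amalgamating subgroup is trivial: selecting a \emph{leaf} is what forces $U\cap V$ to be a single vertex, and hence $K_{U\cap V}=\{1\}$, turning the a priori amalgamated product into a free product and thereby preserving freeness at every stage. Everything else is a routine assembly of the two base computations, so I do not expect any serious obstacle beyond this structural observation.
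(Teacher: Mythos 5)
Your proof is correct and follows essentially the same route as the paper: decompose $K_\Gamma$ along the tree structure via the preceding proposition, observe that the amalgamating subgroups over single vertices are trivial (so the amalgams are genuine free products), and invoke Lemma \ref{lem: D2n is Fn by Z} for the dihedral pieces. The paper states this in one pass as a graph-of-groups decomposition with dihedral vertex groups and cyclic edge groups, whereas you carry out the same idea as an explicit leaf-peeling induction; the content is identical.
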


\begin{proof}
Expressing $G_\Gamma$ as a graph of groups with dihedral vertex groups and cyclic edge groups, and using the previous proposition, we get that $K_\Gamma$ is a free product of $K_U$ where $U$ is dihedral. 
 Lemma \ref{lem: D2n is Fn by Z} tells us that $K_U$ is a finitely generated free group, and so is $K_{\Gamma}$, as a free product of finitely many such groups.
\end{proof}

\begin{proof}[Proof of Theorem \ref{thm: coherence in EAFC}]
We prove first that the conditions are necessary.

Suppose first that $\Gamma$ is not chordal. 
Then it contains an induced cycle $C$ of length at least $4$. 
Let $x,y$ be two vertices of $C$ that are not adjacent. 
One can express $C$ as a union of two trees $T_1$, $T_2$ with $C = T_1 \cup T_2$ and $T_1 \cap T_2 = \{x, y\}$. 
From this, one has a splitting:
\[
    K_{C} = K_{T_1} * _{K_{\{x,y\}}} K_{T_2}.
\]
From Theorem \ref{thm: graph tree, kernel free}, both $K_{T_1}$, and $K_{T_2}$ are finitely generated free groups, so $K_C$ is finitely generated as well. The group $K_{\{x,y\}}$ is the normal closure of $xy^{-1}$ in the free group $G_{\{x,y\}}$, so $K_{\{x,y\}}$ is not finitely generated, and hence by Proposition \ref{prop: facts coherent}, $K_C$ is not finitely presented. 
This means that $G_C$ is not coherent, and hence $G_{\Gamma}$ is not coherent as well.

It remains to consider the case when $\Gamma$ is chordal but $\Gamma^{\leq 2}$ is not.
Then $\Gamma^{\leq 2}$ contains an induced cycle $C$ of length at least $4$. 
Suppose that the length of $C$ is  $>4$, then there are two chords in $\Gamma$ with label $> 2$ and an edge of $C$ forming a triangle, contradicting that $\Gamma$ is EAFC.
So, assume then that there is an induced cycle $C$ of length 4 in $\Gamma^{\leq 2}$. 
This means that $G_C$, the standard parabolic of $G_\Gamma$ spanned by $C$, is either isomorphic to $D_{2s}\times D_{2t}$ with $s,t>1$ or isomorphic to $(\langle a \rangle \times D_{2n}) *_{D_{2n}}(\langle a \rangle \times D_{2n})$ with $n>1$. 
As $D_{2n}$ contains $\mathbb{F}_n$, we see that in both cases $G_C$ contains $\mathbb{F}_2\times \mathbb{F}_2$, which is not coherent. 

We now prove that the conditions are sufficient.

As coherence is preserved under free products, it is enough to consider the case when $\Gamma$ is connected and it satisfies the hypothesis of the theorem.

If $\Gamma$ is complete, then by the hypothesis of the theorem it can contain at most one edge with label greater than $2$. By Lemma \ref{lem: EAFC over complete graphs}, $G_{\Gamma}$ is a direct product of a dihedral Artin group and some copies of $\Z$, so it is coherent.

If $\Gamma$ is not complete, then there are two proper subgraphs $\Gamma_1$ and $\Gamma_2$ with $\Gamma = \Gamma_1 \cup \Gamma_2$, and $\Gamma_1 \cap \Gamma_2 = A$, with $A$ complete. Here one has a splitting:
\[
G_{\Gamma} = G_{\Gamma_1} * _{G_{A}} G_{\Gamma_2}.
\]
One important thing to notice here is that $G_A$ is abelian. Indeed, if there is an edge $\{a,b\} \subset A$ with label $m > 2$, then from the construction, one can find $x \in \Gamma_1$, and $y \in \Gamma_2$ where $a, x, b, y$ form a square with a chord labeled by $m$; but this is not allowed by the hypothesis of the theorem. Now the result follows from Propostion \ref{prop: facts coherent}.
\end{proof}

\vspace{1cm}

\noindent{\textbf{{Acknowledgments}}} 
We thank Thomas Haettel for explaining his results on cubulating Artin--Tits groups.
Also to A. Minasyan for pointing out \cite{Button} and J.O. Button for discussions about his work.

Yago Antol\'{i}n  acknowledges partial support from the Spanish Government through the ``Severo Ochoa Programme for Centres of Excellence in R\&{}D'' CEX2019-000904-S, and the grant PID2021-126254NB-I00 funded by MCIN/ AEI /10.13039/501100011033. 
 He also acknowledges partial support from the project ``Santander-UCM 2021'' PR44/21-29907.

Islam Foniqi acknowledges past support from the Department
of Mathematics of the University of Milano-Bicocca, the Erasmus Traineeship grant 2020-1-IT02-KA103-078077, and current support from the EPSRC Fellowship grant EP/V032003/1 ‘Algorithmic, topological and geometric aspects of infinite groups, monoids and inverse semigroups’.
\bibliography{main}
\bibliographystyle{plain}

\noindent\textit{\\ Yago Antol\'{i}n,\\
Fac. Matem\'{a}ticas, Universidad Complutense de Madrid and \\ 
Instituto de Ciencias Matem\'aticas, CSIC-UAM-UC3M-UCM\\
Madrid, Spain\\}
{email: yago.anpi@gmail.com}

\noindent\textit{\\ Islam Foniqi,\\
School of Mathematics, University of East Anglia,\\
 Norwich NR4 7TJ, \\
 England, UK\\}
{email: I.Foniqi@uea.ac.uk}
\end{document}